\theoremstyle{remark}
\newcommand{\cqed}{\ensuremath{\lhd}}
\newcommand{\MSO}{\ComplexityFont{MSO}}
\newcommand{\CMSO}{\ComplexityFont{CMSO}}
\newcommand{\Cc}{\mathscr{C}}
\newcommand{\Dd}{\mathscr{D}}
\newcommand{\Ee}{\mathscr{E}}
\newcommand{\Ff}{\mathscr{F}}
\newcommand{\Gg}{\mathscr{G}}
\newcommand{\Tt}{\mathscr{T}}
\newcommand{\Pp}{\mathscr{P}}
\newcommand{\Rr}{\mathscr{R}}
\newcommand{\Ll}{\mathcal{L}}
\DeclareMathOperator{\ind}{\text{index}}
\DeclareMathOperator{\bind}{\text{b-index}}
\DeclareMathOperator{\sind}{\text{sindex}}
\newcommand{\Trs}{\mathsf T}
\newcommand{\N}{\mathbb{N}}
\newcommand{\ov}[1]{\overline{#1}}
\newcommand{\ti}[1]{\widetilde{#1}}
\newcounter{dummyc}
\newcommand{\gtd}{Gy\'arf\'as decomposition}
\crefname{fact}{Fact}{Facts}
\crefname{conjecture}{Conjecture}{Conjectures}
\newcommand{\ERCagreement}{This paper is part of projects that have received funding from the European Research Council (ERC) under the European Union's Horizon 2020 research and innovation programme (grant agreements No 810115 -- {\sc Dynasnet} and 948057 -- {\sc BOBR}) and from the German
	Research Foundation (DFG) with grant agreement
	No 444419611.\\
	\includegraphics[width=.25\textwidth]{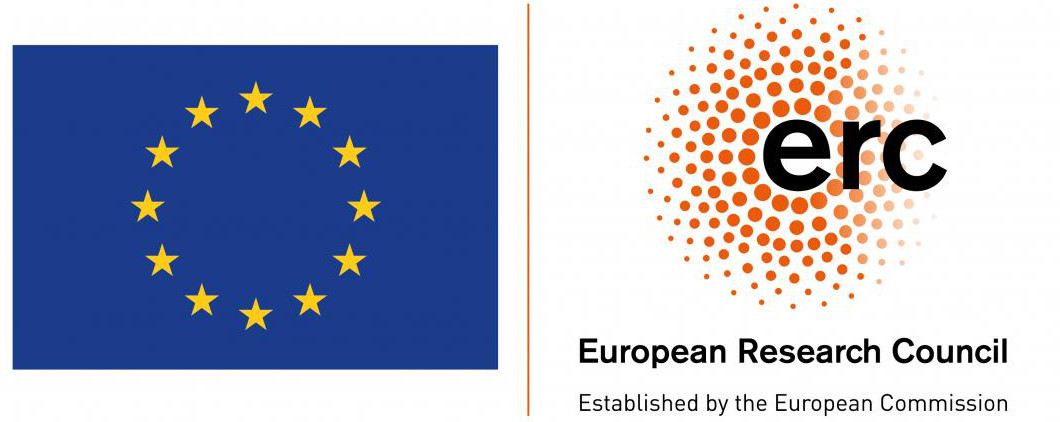}\ 
	\includegraphics[width=.2\textwidth]{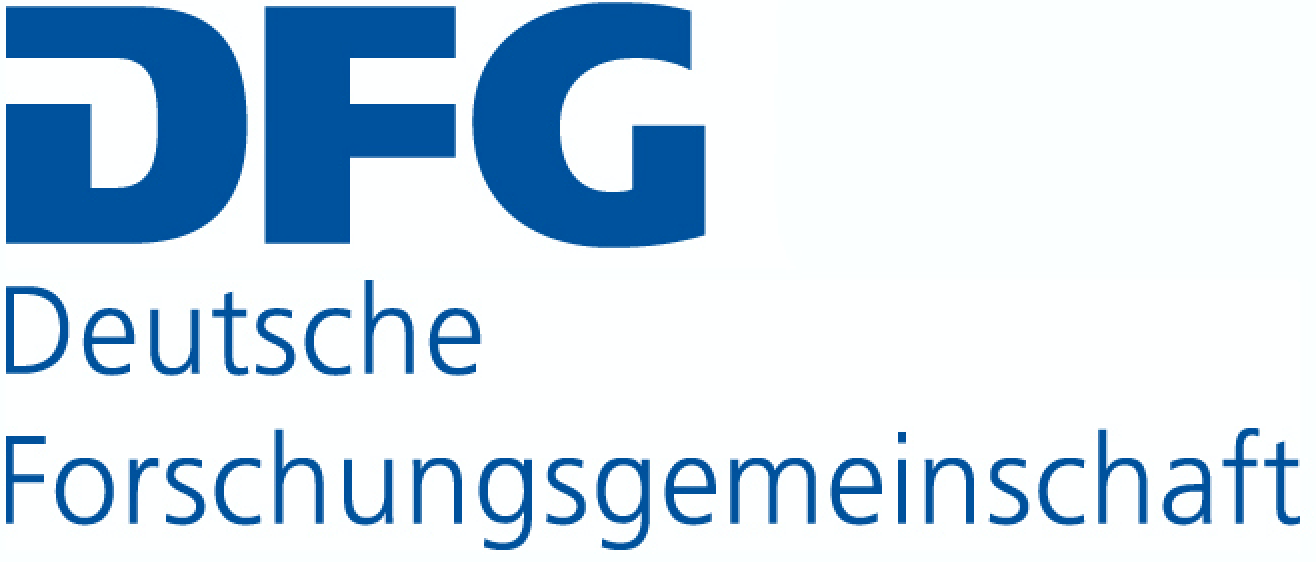}}
\newtheorem{theorem}{Theorem}[section]
\newtheorem{corollary}{Corollary}[section]
\newtheorem{definition}{Definition}[section]
\newtheorem{lemma}{Lemma}[section]
\newtheorem{problem}{Problem}
\newtheorem{conjecture}[problem]{Conjecture}
\newtheorem{fact}[theorem]{Fact}
\crefname{figure}{Figure}{Figures}
\crefname{theorem}{Theorem}{Theorems}
\crefname{definition}{Definition}{Definitions}
\crefname{ext_theorem}{Theorem}{Theorems}
\crefname{corollary}{Corollary}{Corollaries}
\crefname{lemma}{Lemma}{Lemmas}
\crefname{section}{Section}{Sections}
\journal{arXiv}
\begin{document}
	\begin{frontmatter}
	\title{Transducing paths in graph classes with unbounded shrubdepth}
	\tnotetext[ERC]{\ERCagreement}
	\author{Patrice Ossona de Mendez}\address{Centre d'Analyse et de Math\'ematiques Sociales (CNRS, UMR 8557), Paris, France and Computer Science Institute of Charles University, Praha, Czech Republic}\ead{pom@ehess.fr}
	\author{Micha\l{} Pilipczuk}\address{University of Warsaw, Poland}\ead{michal.pilipczuk@mimuw.edu.pl}
	\author{Sebastian Siebertz}\address{University of Bremen, Germany}\ead{siebertz@uni-bremen.de}

	\begin{keyword}
		 shrubdepth, first-order transduction, $\chi$-boundedness
    \end{keyword}
	\begin{abstract}
Transductions are a general formalism for expressing transformations of graphs (and more generally, of relational structures) in logic.
We prove that a graph class $\Cc$ can be $\FO$-transduced from a class of bounded-height trees (that is, has {\em{bounded shrubdepth}}) if, and only if, from $\Cc$ one cannot $\FO$-transduce the class of all paths. 
This establishes one of the three remaining open questions posed by Blumensath and Courcelle about the $\MSO$-transduction quasi-order, even in the stronger form that concerns $\FO$-transductions instead of $\MSO$-transductions.

The backbone of our proof is a graph-theoretic statement that says the following: If a graph $G$ excludes a path, the bipartite complement of a path, and a half-graph as semi-induced subgraphs, then the vertex set of $G$ can be partitioned into a bounded number of parts so that every part induces a cograph of bounded height, and every pair of parts semi-induce a bi-cograph of bounded height. This statement may be of independent interest; for instance, it implies that the graphs in question form a class that is linearly $\chi$-bounded.
\end{abstract}

%
%

\end{frontmatter}

\section{Introduction}\label{sec:intro}

\subsection{Background and the main result}

\paragraph*{Transductions} Transductions provide a model theoretical framework for transformations of relational structures definable in logic. In this work we consider only graphs; these will be sometimes {\em{colored}}, by which we mean that there can be some unary predicates distinguishing subsets of vertices (not necessarily disjoint).
%

To define transductions in this setting, we first introduce interpretations. For a logic $\Ll$, an \emph{$\Ll$-interpretation} $\mathsf I$ consists of two $\Ll$-formulas $\varphi(x,y)$ and $\psi(x)$. Applying $\mathsf I$ to a colored graph $G$ yields a new graph $\mathsf I(G)$, whose vertex set consists of all vertices of $G$ satisfying $\psi$ and whose edge set consists of all pairs of vertices of $G$ satisfying $\varphi$. An \emph{$\Ll$-transduction} $\Trs$ is a mechanism of transforming graphs that consists of an $\Ll$-interpretation $\mathsf I$. The semantic is as follows: for a (colored) graph $G$, $\Trs(G)$ consists of all graphs that can be obtained from $G$ by first {\em{coloring}} $G$ --- extending $G$ by the unary predicates used by $\mathsf I$ and not present in $G$ --- in an arbitrary way  and then applying $\mathsf I$. Note that due to the coloring step, $\Trs(G)$ is a set of graphs rather than a single graph: it contains one output per each possible coloring of the input. (See \cref{sec:prelims} for a formal definition. In particular, one usually allows also the operation of copying vertices, which is immaterial here.)

 
For a graph class $\Cc$, we define $\Trs(\Cc)=\bigcup_{G\in \Cc} \Trs(G)$. 
We say that a class $\Cc$ is {\em{$\Ll$-transducible}} from $\Dd$ if there is an $\Ll$-transduction $\Trs$ such that $\Cc\subseteq \Trs(\Dd)$;  we denote this by $\Cc\sqsubseteq_{\Ll} \Dd$. For the standard logics, $\Ll$-transductions are closed under composition. Hence in this case $\sqsubseteq_{\Ll}$ is a quasi-order on graph classes; we call it the {\em{$\Ll$-transduction quasi-order}}. If classes $\Cc$ and $\Dd$ are such that $\Cc\sqsubseteq_\Ll \Dd$ and $\Dd\sqsubseteq_\Ll \Cc$, then we say that $\Cc$ and $\Dd$ are {\em{$\Ll$-equivalent}}.

Depending on the expressive power of $\Ll$, $\Ll$-transductions provide containment notions of varying strength for graphs and for graph classes. In this work we consider $\Ll\in \{\FO,\MSO,\CMSO\}$, where $\FO$ is the standard first-order logic on graphs, $\MSO$ is the {\em{monadic second order logic}} that extends $\FO$ by the possibility of quantifying over vertex subsets, and $\CMSO$ is an extension of $\MSO$ by allowing counting modular predicates that can be applied to the cardinalities of sets. 


\paragraph{Shrubdepth}
To make the notion of a transduction more concrete, let us consider the following example. A {\em{tree model}} of a graph $G$ is a labelled rooted tree $T$ such that:
\begin{itemize}
	\item the vertices of $T$ are labelled with a finite set of labels;
	\item the leaf set of $T$ is equal to the vertex set of $G$; and
	\item for every pair of vertices $u,v$ of $G$, whether $u$ and $v$ are adjacent in $G$ depends only on the triple of labels: of $u$, of $v$, and of the lowest common ancestor of $u$ and $v$ in $T$.
\end{itemize}
(See~\cref{sec:prelims} for a formal definition.) Observe that if $T$ is a tree model of $G$ of height $h$ and using a label set of size $k$, then $G$ can be transduced from $T$ using a fixed $\FO$-transduction (that depends on $h$ and $k$). The transduction first introduces a coloring of $T$ that distinguishes vertices with different labels, as well as the leaves and the root. Then, the edge relation of $G$ can be recovered by an $\FO$ formula that for given two leaves $u,v$, finds the lowest common ancestor of $u$ and $v$ and compares the relevant triple of labels. Finally, the vertex set is restricted to the leaves of $T$.

Tree models were introduced by Ganian et al.~\cite{Ganian2012,Ganian2017} to define a class parameter called {\em{shrubdepth}}. Precisely, the {\em{shrubdepth}} of a graph class $\Cc$ is the least $h\in \N$ for which the following holds: there is $k\in \N$ such that every graph of $\Cc$ admits a tree model of height at most $h$ that uses a set of at most $k$ labels. The argument from the previous paragraph shows that if $\Cc$ has bounded shrubdepth, then $\Cc$ can be $\FO$-transduced from a class of trees of bounded height. As proved in~\cite{Ganian2017}, the converse is also true in a very strong sense: every class $\FO$-transducible from a class of bounded-height trees in fact has bounded shrubdepth, and this holds even for the stronger notions of $\MSO$- and $\CMSO$-transductions. Thus, the tree models presented above provide a ``canonical form'' for \FO-, \MSO-, and even   \CMSO-transductions from classes of bounded height trees.

Classes of bounded shrubdepth appear to be an interesting concept on its own. As argued in~\cite{Ganian2017}, shrubdepth is an analogue of treedepth suited for the treatment of dense graphs. 
Classes of bounded treedepth have bounded shrubdepth, classes of bounded shrubdepth that are additionally sparse (say, exclude some fixed biclique as a subgraph)  have bounded treedepth and, just as graphs with bounded treedepth are building blocks of graphs in bounded expansion classes \cite{Sparsity}, graphs with bounded shrubdepth are building blocks of graphs in $\FO$-transductions of bounded expansion classes \cite{SBE_TOCL}.
 Also, shrubdepth is functionally equivalent to several other parameters such as {\em{rankdepth}} and {\em{SC-depth}}, in the sense that the boundedness of one is equivalent to the boundedness of the others. It is noteworthy that any hereditary class with bounded shrubdepth can be defined by the exclusion of a finite number of induced subgraphs~\cite{Ganian2017}.

%
%

\paragraph*{$\MSO$-transductions}
The $\MSO$-transduction quasi-order has been studied by Blumensath and Courcelle in~\cite{blumensath10}, who described the following structure. For $n\in \N$, let $\Ff_n$ be the class of rooted forests of height $n$; here, the height of a forest is the length of the longest root-to-leaf path, hence~$\Ff_0$ is the class of edgeless graphs. Further, let $\Pp$~be the class of all paths, $\Tt$ be the class of
all trees, and $\Gg$ be the class of all graphs. Clearly, we have 
$$\emptyset \sqsubseteq_\MSO \Ff_0\sqsubseteq_\MSO \Ff_1\sqsubseteq_\MSO \ldots \sqsubseteq_\MSO \Ff_n\sqsubseteq_\MSO\ldots\sqsubseteq_\MSO\Pp\sqsubseteq_\MSO\Tt\sqsubseteq_\MSO\Gg.$$

Up to $\MSO$-transduction equivalence, these classes correspond to the classes with bounded shrubdepth (transducible from $\Ff_n$ for any $n\in \N$), classes with bounded linear cliquewidth (transducible from $\Pp$), and classes with bounded cliquewidth (transducible from  $\Tt$).
In \cite{blumensath10}, Blumensath and Courcelle proved that this hierarchy is strict. They conjectured that it is also complete in the following sense: every graph class is $\MSO$-equivalent to one of the classes in the hierarchy; see~\cite[Open Problem~9.3]{blumensath10}. 

This conjecture was confirmed for the initial prefix of the hierarchy by Ganian et al.~\cite{Ganian2017} in the following sense: if $\Cc\sqsubseteq_\MSO \Ff_m$ for some $m\in \N$ (equivalently, $\Cc$ has bounded shrubdepth), then in fact $\Cc$ is $\MSO$-equivalent to $\Ff_n$ for some $n\in \N$. 
Therefore, to resolve the open problem of Blumensath and Courcelle it remains to prove the following three conjectures.

\begin{conjecture}	\label{conj:MSO}
	A class $\Cc$ of graphs has bounded shrubdepth if, and only if, 
	the class of all paths is not  $\MSO$-transducible from $\Cc$.
\end{conjecture}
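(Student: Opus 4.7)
I prove \cref{conj:MSO} by establishing both directions; the backward direction will in fact follow from a stronger statement where $\MSO$-transductions are replaced by $\FO$-transductions. The forward direction is straightforward: if $\Cc$ has bounded shrubdepth then by the Ganian et al.\ characterization~\cite{Ganian2017} there is $n$ with $\Cc \sqsubseteq_{\FO} \Ff_n$, hence also $\Cc \sqsubseteq_{\MSO} \Ff_n$. Composing this with any hypothetical $\MSO$-transduction $\Pp \sqsubseteq_{\MSO} \Cc$ yields $\Pp \sqsubseteq_{\MSO} \Ff_n$, and since the same characterization is known to hold also for $\MSO$-transductions (as recalled in the introduction), $\Pp$ would have bounded shrubdepth. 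This contradicts the known fact that paths have unbounded shrubdepth: $\Pp$ is subgraph-closed of bounded degree but of unbounded treedepth, and on such classes bounded shrubdepth forces bounded treedepth.

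For the backward direction I prove the stronger form: if $\Cc$ has unbounded shrubdepth, then $\Pp \sqsubseteq_{\FO} \Cc$, which in particular gives $\Pp \sqsubseteq_{\MSO} \Cc$. The key tool is the graph-theoretic backbone stated in the abstract. Contrapositively, it reads: if some fixed $k$ bounds the largest semi-induced copy, in any $G \in \Cc$, of the path $P_k$, its bipartite complement $\ov{P_k}$, and the half-graph $H_k$, then every $G \in \Cc$ admits a partition into boundedly many parts, each inducing a bounded-height cograph, with all pairs of parts semi-inducing a bounded-height bi-cograph, and such a decomposition unfolds into a tree model of $G$ of bounded height (one root-level choice encoding the partition, bounded-height cograph/bi-cograph tree models underneath). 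This would contradict unbounded shrubdepth. Therefore unbounded shrubdepth forces one of the three obstructions $P_k$, $\ov{P_k}$, or $H_k$ to appear as a semi-induced subgraph of some $G \in \Cc$ for arbitrarily large $k$, and by the pigeonhole principle one fixed pattern recurs unboundedly.

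It remains to $\FO$-transduce arbitrarily long induced paths from each of these three configurations. Semi-induced long paths yield induced paths immediately: colour the two sides of the bipartite pattern, restrict the universe to the coloured vertices, and keep the edge relation. Semi-induced bipartite complements of paths are handled symmetrically by additionally flipping the edge relation inside the transduction. Large half-graphs are the richest case: the half-graph of order $k$ linearly orders one of its sides via neighbourhood inclusion, a relation $\FO$-definable with the other side as parameters; extracting the successor of that order produces an induced path of length $k-1$. In all three cases a single fixed $\FO$-transduction yields induced paths whose length grows with $k$, so $\Pp \sqsubseteq_{\FO} \Cc$.

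The main obstacle is, as signalled in the abstract, the graph-theoretic backbone itself: a Ramsey-flavoured structural theorem that either exhibits one of the three bipartite obstructions as a semi-induced subgraph of $G$, or produces the cograph/bi-cograph decomposition. The three transduction steps above are routine once a large semi-induced pattern has been isolated, and the forward direction is an immediate combination of known results; essentially all the content of the theorem is concentrated in the decomposition theorem, which in turn yields independent consequences such as the linear $\chi$-boundedness asserted in the abstract.
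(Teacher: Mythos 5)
Your forward direction is fine, and your overall plan for the backward direction matches the paper's: reduce to a structural theorem saying that excluding semi-induced paths, bipartite complements of paths, and half-graphs forces bounded shrubdepth, and separately $\FO$-transduce long paths out of each of the three obstructions. The three transduction gadgets you describe (colour two sides and restrict; the same with an edge-flip; define a linear order on one side of a half-graph by neighbourhood inclusion and take its successor) are essentially what is needed, and your point that the theorem really lives in the decomposition statement is correct.

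The gap is in the sentence ``such a decomposition unfolds into a tree model of $G$ of bounded height (one root-level choice encoding the partition, bounded-height cograph/bi-cograph tree models underneath).'' This is not a valid step, and the paper does not take it. A $2$-cosplit gives you, for each part $P_i$, a cotree $T_i$ of $G[P_i]$, and for each pair $i\neq j$, a bi-cotree $T_{ij}$ of $G[P_i,P_j]$. These trees live on overlapping leaf sets and need not be hierarchically consistent with one another, so there is no way to hang them under a single root and get a tree model: if $u\in P_i$ and $v\in P_j$ with $i\neq j$, your proposed tree would put $u\wedge v$ at the root, so the adjacency of $u$ and $v$ could only depend on the labels of $u$, $v$, and the root, which would force $G[P_i,P_j]$ to be complete or empty bipartite rather than an arbitrary bounded-height bi-cograph. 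More fundamentally, the internal structure of $T_i$ on $P_i$ and of $T_{ij}$ restricted to $P_i$ can cross arbitrarily, so there is no naive product tree of bounded height. What the paper actually does (\cref{lem:sparsify_cograph}, \cref{lem:sparsify_bi-cograph}, \cref{thm:sparsification_cosplit}, \cref{thm:sparsification}) is \emph{sparsify}: it shows that a class admitting bounded $2$-cosplits is transduction-equivalent to a class $\Dd$ of degenerate graphs, and then --- crucially reusing the ``paths not transducible'' hypothesis, now applied to $\Dd$ --- argues that $\Dd$ has bounded-length paths, which together with degeneracy gives bounded treedepth, hence bounded shrubdepth, and this pulls back to $\Cc$ by transduction. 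Your ``unfolding'' claim, if correct, would short-circuit all of \cref{lem:sparsify_cograph}--\cref{thm:sparsification} and the detour through treedepth; since that machinery is a substantial portion of the paper, you should treat the unfolding assertion as the missing idea rather than as routine.
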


\begin{conjecture}	\label{conj:MSOlcw}
	A class $\Cc$ of graphs has bounded linear cliquewidth if, and only if, the class of all trees is not $\MSO$-transducible from $\Cc$.
\end{conjecture}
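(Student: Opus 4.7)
My plan is to separate the two implications. The backward direction is the easier half: if $\Cc$ has bounded linear cliquewidth, then by the very characterization of that parameter we have $\Cc\sqsubseteq_\MSO\Pp$. Were $\Tt\sqsubseteq_\MSO\Cc$ also to hold, composition of $\MSO$-transductions would give $\Tt\sqsubseteq_\MSO\Pp$, contradicting Courcelle's classical theorem that the class of trees has unbounded linear cliquewidth. Hence trees are not $\MSO$-transducible from $\Cc$.

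For the forward direction I would work by contrapositive: assuming $\Cc$ has unbounded linear cliquewidth, produce an $\MSO$-transduction from $\Cc$ onto $\Tt$. Since bounded shrubdepth implies bounded linear cliquewidth (this is the inclusion $\Ff_n\sqsubseteq_\MSO\Pp$ in the Blumensath--Courcelle hierarchy), such a $\Cc$ must in fact have unbounded shrubdepth, so the main theorem of the present paper immediately yields $\Pp\sqsubseteq_{\FO}\Cc$, and a fortiori $\Pp\sqsubseteq_{\MSO}\Cc$. This is only a first foothold: paths alone cannot deliver $\Tt$, again by the backward direction applied to $\Pp$. To close the gap one must extract from the hypothesis of unbounded linear cliquewidth a genuinely \emph{branching} structure inside $\Cc$, not merely a path-like one.

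The central step, then, is to prove a structural dichotomy analogous in spirit to the graph-theoretic backbone of the present paper, but targeting unbounded linear cliquewidth instead of unbounded shrubdepth. Concretely, I would try to pin down a finite catalogue of semi-induced ``branching patterns'' --- natural candidates are arbitrarily large complete binary trees and their subdivisions, bipartite ladders, suitably coloured grids, and their bipartite complements --- and show that \emph{every} class of unbounded linear cliquewidth must realise at least one such family unboundedly. Given any fixed branching pattern, encoding an arbitrary rooted tree inside it is usually direct: ancestry, sibling order, and leaf-identification are all definable by $\MSO$ reachability predicates once a branching backbone is available. Combined with the half-graph/path/bipartite-complement-of-path obstruction analysis from the present paper, one would then argue that any unbounded-linear-cliquewidth class that is not already of bounded shrubdepth must contain genuinely branching witnesses rather than merely linear ones.

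The hardest part, in my estimation, lies in that structural dichotomy. Shrubdepth admits the very clean tree-model certificate of Ganian et al., on which the present paper's obstruction theorem leans; no comparably clean combinatorial certificate is known for linear cliquewidth, and the forbidden-pattern landscape in the dense setting is only partially charted. I would expect the proof to hinge on a delicate Ramsey-style extraction performed inside an almost-optimal linear cliquewidth decomposition, identifying positions where such a decomposition is forced to ``fan out'' in a way a linear one cannot capture, and converting those fan-outs into the semi-induced branching patterns above --- with the main source of pain being the control of the boundary behaviour between the linear decomposition and the dense pattern one wishes to expose.
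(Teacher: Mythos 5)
The statement you were asked about is not a theorem of this paper but \Cref{conj:MSOlcw}, one of the three conjectures the authors explicitly list as \emph{still open} after their work. The paper proves \Cref{conj:FO} (and hence \Cref{conj:MSO}), i.e.\ the shrubdepth case; it leaves the linear-cliquewidth and cliquewidth cases, \Cref{conj:MSOlcw} and \Cref{conj:MSOcw}, unresolved. So there is no paper proof to compare your sketch against, and indeed no complete proof is currently known.

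Within that framing, your own text is accurate about where the difficulty sits. The backward direction as you give it is fine: bounded linear cliquewidth is, up to $\MSO$-transduction equivalence, the same as being transducible from $\Pp$, and since $\Tt$ has unbounded linear cliquewidth, $\Tt\sqsubseteq_\MSO\Cc$ together with $\Cc\sqsubseteq_\MSO\Pp$ would give $\Tt\sqsubseteq_\MSO\Pp$, a contradiction. But the forward direction --- extracting a transduction of all trees from any class of unbounded linear cliquewidth --- is precisely the open problem, and your proposal does not close it: you observe that the present paper's \Cref{thm:main-shb} yields $\Pp\sqsubseteq_\FO\Cc$ (which is correct but far too weak), and then you explicitly defer the real work to a hypothetical ``structural dichotomy'' producing a branching pattern, which you acknowledge is not known. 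That is a genuine and essential gap, not a routine verification. Two further remarks. First, the Ganian--Hlin\v{e}n\'y--Ne\v{s}et\v{r}il--Obdr\v{z}\'alek--Ossona~de~Mendez result already handles the prefix of the hierarchy and shows that unbounded linear cliquewidth forces unbounded shrubdepth, so the appeal to the present paper's theorem in the forward direction buys nothing beyond what was already available. Second, it is instructive that for $\FO$ the authors do \emph{not} conjecture transducibility of all trees but only of a class containing \emph{some subdivision of every binary tree} (\Cref{conj:FOlcw}); this is because an $\FO$-transduction cannot contract long subdivided edges, whereas $\MSO$ can. Any serious attack on \Cref{conj:MSOlcw} should probably target subdivided binary trees first and then use $\MSO$-definable contraction, rather than aiming at $\Tt$ directly. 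As it stands, your write-up is a reasonable description of a research program, but it is not a proof and should not be presented as one.
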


\begin{conjecture}	\label{conj:MSOcw}	
	A class $\Cc$ of graphs has bounded cliquewidth if, and only if, the class of all grids is not $\MSO$-transducible from $\Cc$.
\end{conjecture}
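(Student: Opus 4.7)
The easy direction is standard: $\MSO$-transductions preserve bounded cliquewidth via the classical Courcelle--Engelfriet transfer principles for clique-width expressions, while grids have unbounded cliquewidth; hence no $\MSO$-transduction from a bounded-cliquewidth $\Cc$ can cover the class of all grids.

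For the hard direction, suppose $\Cc$ has unbounded cliquewidth. Since cliquewidth is functionally equivalent to rank-width, this is the same as saying $\Cc$ has unbounded rank-width. My plan would be to extract, by $\MSO$-definable means, a grid-like structure from any graph of sufficiently large rank-width, and then to carve out a genuine grid using the colorings provided by a transduction. I would split the argument according to a density dichotomy. If $\Cc$ excludes some fixed biclique as a subgraph, then unbounded cliquewidth entails unbounded treewidth, so the Robertson--Seymour excluded-grid theorem supplies large grid minors, and these can be turned into induced grids by a standard $\MSO$-transduction. Otherwise, $\Cc$ contains arbitrarily large bicliques, or more generally genuinely dense obstructions, and a substantively new argument is needed in this regime.

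The hard part is precisely the dense case. Unlike for treewidth, we lack an excluded-grid theorem for cliquewidth that directly produces an $\MSO$-definable grid-like structure from unbounded width. A natural route is to seek a graph-theoretic dichotomy analogous to the one underpinning the shrubdepth/paths result of the present paper (partitioning vertex sets into cograph-like pieces with bi-cograph interactions), but recalibrated to the cliquewidth/grids setting. This would likely involve a characterization in terms of excluded induced subgraphs or excluded pivot-minors that detects exactly when the obstruction to bounded cliquewidth is ``grid-like'' rather than ``tree-like''. Plausible tools include the theory of vertex-minors and rank-decomposition obstructions, together with the recent twin-width framework. Identifying the correct combinatorial obstruction, and establishing that it admits an $\MSO$-definable grid, is precisely the hurdle that has kept this conjecture open since Blumensath and Courcelle formulated it.
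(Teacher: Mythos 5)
This statement is \Cref{conj:MSOcw}, which the paper itself leaves open; the paper proves \Cref{conj:FO} (the shrubdepth/paths characterization) but explicitly does not resolve the cliquewidth/grids conjecture, remarking only that a positive resolution would also settle Seese's Conjecture. So there is no paper proof to compare against, and you were right not to manufacture one: your submission is an assessment of the state of the art rather than a proof, and it correctly identifies the easy direction (closure of bounded cliquewidth under $\MSO$-transductions, plus unbounded cliquewidth of grids), the tractability of the sparse case via the excluded-grid theorem, and the genuine open difficulty in the dense regime.

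One thing you omit that the paper does record is that the $\CMSO$ weakening of this conjecture is already known: Courcelle and Oum~\cite{courcelle2007vertex} showed that every class of unbounded cliquewidth contains all grids as vertex-minors, and the vertex-minor relation is $\CMSO$-transducible, which yields the right-to-left implication with $\CMSO$ in place of $\MSO$. Your mention of ``pivot-minors'' is in the right neighborhood, but the precise known tool is vertex-minors, and the obstacle to upgrading from $\CMSO$ to $\MSO$ is that the local complementation underlying vertex-minors appears to require counting (parity) to define. Your suggested route through a cograph/bi-cograph-style decomposition dichotomy is natural in spirit, but note that the paper's decomposition machinery (\Cref{thm:induced,thm:semi-induced}) is tailored to producing bounded \emph{shrubdepth} from the absence of paths, half-graphs, and their complements; it is not calibrated to cliquewidth, where the relevant obstruction is two-dimensional (grid-like) rather than one-dimensional (path-like), and no analogous $\MSO$-definable grid-extraction is currently known. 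In short, your proposal is an accurate and honest description of why the conjecture is open, not a proof, and the paper agrees with that assessment.
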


We note that a positive resolution of~\cref{conj:MSOcw} would establish another long-standing open problem, {\em{Seese's Conjecture}}.

\cref{conj:MSO,conj:MSOcw} have been established in weaker forms, where one replaces $\MSO$-transductions with $\CMSO$-transductions. More precisely, Kwon et al.~\cite{kwon2021obstructions} proved that every graph class of unbounded shrubdepth contains all paths as vertex-minors, and Courcelle and Oum~\cite{courcelle2007vertex} proved that every graph class of unbounded cliquewidth contains all grids as vertex-minors. Since the vertex-minor relation can be expressed by a $\CMSO$-transduction~\cite{courcelle2007vertex}, this yields right-to-left implications of the $\CMSO$-variants of \cref{conj:MSO,conj:MSOcw}. The left-to-right implications are well-known to hold.

\paragraph*{$\FO$-transductions}
Compared to the above, much less is known about $\FO$-transductions and the picture appears to be much more complicated; see~\cite{arboretum}. However, in recent years there has been an attempt of constructing a sound structural theory for well-structured dense graphs based on this notion; see e.g.~\cite{tww4,tww1,GajarskyHOLR20,SBE_TOCL,StableTWW,msrw,NesetrilMPRS21}. In particular, the following conjectures have been formulated about characterizations of shrubdepth, linear cliquewidth, and cliquewidth in terms of $\FO$-transductions. These are strengthenings of \cref{conj:MSO,conj:MSOlcw,conj:MSOcw}, respectively.

\begin{conjecture}[\cite{arboretum}, repeated in \cite{StableTWW}]
	\label{conj:FO}
	A class $\Cc$ of graphs has bounded shrubdepth if, and only if, 
	the class of all paths is not \FO-transducible from $\Cc$.
\end{conjecture}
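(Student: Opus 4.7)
The forward implication is classical: if $\Cc$ has bounded shrubdepth, then $\Cc \sqsubseteq_{\FO} \Ff_n$ for some $n \in \N$ by the characterization of shrubdepth via tree models, and since $\FO$-transductions compose, $\Pp \sqsubseteq_{\FO} \Cc$ would yield $\Pp \sqsubseteq_{\FO} \Ff_n$, contradicting the known fact that $\Pp$ has unbounded shrubdepth. So I focus on the contrapositive of the nontrivial direction: assuming $\Cc$ has unbounded shrubdepth, I aim to show $\Pp \sqsubseteq_{\FO} \Cc$.

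The backbone of the plan is the graph-theoretic structural theorem announced in the abstract: if $G$ excludes, as semi-induced subgraphs, a path $P_n$, a bipartite complement of a path $\overline{P_n}$, and a half-graph $H_n$, then $V(G)$ admits a partition into $f(n)$ parts such that each part induces a cograph of height at most $g(n)$ and each pair of parts semi-induces a bi-cograph of height at most $g(n)$. I would first verify that such a decomposition can be assembled into a tree model of bounded height using a bounded label set, witnessing that the shrubdepth of $G$ is bounded by a function of $n$. Taking the contrapositive, unbounded shrubdepth of $\Cc$ then forces that for every $n$ some $G \in \Cc$ semi-induces at least one of $P_n$, $\overline{P_n}$, or $H_n$; by pigeonhole, one fixed obstruction among the three appears semi-induced of arbitrary size in graphs of $\Cc$.

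It thus suffices to exhibit, for each of the three obstructions, an $\FO$-transduction that produces the class of all paths from a class in which the corresponding obstruction appears semi-induced of arbitrary size. For semi-induced paths, the transduction colors the two sides $A,B$ of the bipartition, restricts to $A \cup B$, and defines the output edge relation as adjacency in $G$ restricted to pairs crossing $(A,B)$; since the edges between $A$ and $B$ form exactly $P_n$, the output is isomorphic to $P_n$. For semi-induced bipartite complements of paths, the same construction with the cross-edge relation replaced by non-adjacency produces $P_n$ directly. For half-graphs, the reduction is standard in the transduction literature: after coloring a suitable diagonal of $H_n$, one defines the adjacency of a long path by a short formula exploiting the staircase structure of the half-graph.

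The main obstacle is clearly the graph-theoretic structural theorem itself. I expect its proof to proceed via induction on the obstruction parameter combined with Ramsey-type arguments that extract homogeneous substructures once the three obstructions are forbidden, together with an iterative peeling of the vertex set in the spirit of the \gtd. The exclusion of half-graphs should be the critical ingredient that distinguishes ordered from unordered behaviour, allowing one to pass from a lack of forbidden semi-induced structures to cograph-like interactions both within and between parts; tracking simultaneously the number of parts, the cograph depth inside parts, and the bi-cograph depth between parts while maintaining a single common parameter $n$ will be the most delicate bookkeeping task.
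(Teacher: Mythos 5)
Your forward implication, contrapositive framing, and the easy transductions extracting paths from large semi-induced $P_n$, $\ti{P}_n$, or $H_n$ are all fine, and the decomposition theorem you quote from the abstract is indeed \cref{thm:semi-induced}. The genuine gap is the sentence ``I would first verify that such a decomposition can be assembled into a tree model of bounded height using a bounded label set, witnessing that the shrubdepth of $G$ is bounded by a function of $n$.'' That step is false: a $2$-cosplit of bounded size and height does \emph{not} bound shrubdepth. Concretely, $P_n$ itself admits a $2$-cosplit of size $2$ and height at most $3$: take $A = \{1,2,3\}\cup\{5,6,7\}\cup\cdots$ and $B = \{4,8,12,\dots\}$; then $G[A]$ is a disjoint union of $P_3$'s (a cograph of height $3$), $G[B]$ is edgeless, and $G[A,B]$ is a disjoint union of stars (a bi-cograph of height $2$) --- yet paths have unbounded shrubdepth. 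The cotrees and bi-cotrees attached to the $N + \binom{N}{2}$ parts and pairs live on incompatibly overlapping leaf sets, and there is no generic way to interleave them into a single tree model of bounded height and label count.

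The paper therefore does not attempt this direct assembly. Instead it uses the $2$-cosplit to \emph{sparsify} (\cref{lem:sparsify_cograph,lem:sparsify_bi-cograph,thm:sparsification_cosplit}): from the bounded-height cotrees and bi-cotrees one builds a coloring under which $\Cc$ is transduction-equivalent to a class $\Dd$ of degenerate graphs. This alone still does not bound shrubdepth, since degenerate classes (e.g.\ paths) can have unbounded shrubdepth. The last and crucial ingredient invokes the hypothesis a second time: since paths are not transducible from $\Cc$, they are not transducible from $\Dd$ either, so $\Dd$ excludes some $P_m$ as a subgraph; a degenerate class excluding a fixed path as a subgraph has bounded treedepth, hence bounded shrubdepth, hence so does $\Cc$. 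In your contrapositive framing the corresponding move would be: if $\Dd$ contained all paths, then paths would be transducible from $\Cc$, forcing one of the three obstructions to appear at arbitrary size and contradicting the case assumption. Without this sparsification-plus-degeneracy detour --- and this second use of the path-exclusion hypothesis --- the reduction from the decomposition theorem to bounded shrubdepth does not close.
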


\begin{conjecture}[\cite{StableTWW}]
	\label{conj:FOlcw}
	A class $\Cc$ of graphs has bounded linear cliquewidth if, and only if, 
	no class containing some subdivision of every binary tree is
 \FO-transducible from $\Cc$.
\end{conjecture}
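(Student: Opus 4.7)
Assume $\Cc$ has bounded linear cliquewidth. I would first show that $\FO$-transductions blow up linear cliquewidth by at most a bounded function, by adapting the standard transduction-on-decompositions argument: colour the input to guess its linear clique-width decomposition, then simulate the transduction on the decomposed structure. It then remains to observe that any class containing a subdivision of every binary tree has unbounded linear cliquewidth, which is folklore and follows via a pathwidth-type argument in the sparse setting.

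\textbf{Hard direction.} Assume $\Cc$ has unbounded linear cliquewidth; the goal is to $\FO$-transduce from $\Cc$ a class containing a subdivision of every binary tree. I would split into cases by shrubdepth. If $\Cc$ has unbounded shrubdepth, the paper's main theorem delivers that all paths are $\FO$-transducible from $\Cc$, but this is not yet enough: paths themselves have bounded linear cliquewidth, so further $\FO$-transductions from paths cannot reach the target. I would therefore try to strengthen the main theorem so that, when $\Cc$ also witnesses unbounded linear cliquewidth, the transduction produces paths decorated with a colouring that records ``branching pointers'' inherited from the original graphs, allowing an $\FO$-definable construction of subdivided binary trees. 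If instead $\Cc$ has bounded shrubdepth, then each $G \in \Cc$ carries a tree model of bounded height on a bounded label set; unbounded linear cliquewidth must then surface as an unbounded branching quantity inside these models, and I would try to pinpoint the level at which the branching explodes and extract a subdivided binary tree from that level via an $\FO$-transduction acting on the tree model, subdivision included through intermediate labels.

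\textbf{Main obstacle.} The principal difficulty is the first case above. The main theorem outputs paths, which are already closed under $\FO$-transductions within the bounded-linear-cliquewidth regime, so as stated it cannot finish the job. Strengthening it to preserve branching data appears delicate: its proof (as sketched in the abstract) rests on a Ramsey-type partition of $\Cc$ into cograph-like and bi-cograph-like pieces, and it is unclear whether that partition retains enough branching structure of the original graph to be read off by $\FO$. In the bounded-shrubdepth subcase, the obstacle is instead combinatorial: identifying precisely which feature of a bounded-shrubdepth tree model corresponds to unbounded linear cliquewidth, and then $\FO$-transducing that feature into a subdivided binary tree without access to transitive closure. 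For these reasons I expect \cref{conj:FOlcw} to require genuinely new tools beyond the techniques that handle \cref{conj:FO}.
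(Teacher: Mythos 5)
There is nothing in the paper to compare your proposal against: \cref{conj:FOlcw} is stated as an \emph{open conjecture} (attributed to \cite{StableTWW}), not a theorem, and the paper explicitly resolves only \cref{conj:FO}, the shrubdepth/path case. So the ground truth is that no proof exists in this paper, and your final assessment --- that this conjecture appears to require tools beyond those developed here --- is exactly the paper's own position.

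Taking your sketch on its own terms: the forward direction is standard and fine in spirit ($\FO$-transductions are a restriction of $\MSO$-transductions and hence preserve bounded linear cliquewidth; the class of subdivided binary trees has unbounded pathwidth and hence, being a class of trees, unbounded linear cliquewidth). For the hard direction, your central observation is the correct one and worth stressing: \cref{thm:main-shb} only produces the class of paths, which itself has bounded linear cliquewidth and is therefore closed under further $\FO$-transductions within the bounded-linear-cliquewidth regime, so the paper's main theorem cannot be bootstrapped to reach subdivided binary trees. Your proposed case split on shrubdepth is a reasonable way to organize an attack, but as you concede, neither branch is filled in; in particular the bounded-shrubdepth branch, where you would need to locate the ``branching explosion'' inside a bounded-height tree model and $\FO$-transduce a subdivided binary tree from it, is essentially the whole content of the conjecture restricted to that case and is not something the paper's cosplit/Gy\'arf\'as machinery addresses. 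In short, you have not proved the statement, but you have correctly identified both that it is open and why the present techniques fall short.
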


\begin{conjecture}[\cite{StableTWW}]
	\label{conj:FOcw}	
	A class $\Cc$ of graphs has bounded cliquewidth if, and only if, 
	no class containing some subdivision of every wall is \FO-transducible from $\Cc$.
\end{conjecture}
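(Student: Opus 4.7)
The plan is to establish the two directions separately; the forward direction follows from classical preservation theorems, while the backward direction is the substantive content and remains wide open. For the forward direction, I assume $\Cc$ has bounded cliquewidth and show that no class containing a subdivision of every wall can be $\FO$-transduced from $\Cc$. Every $\FO$-transduction is in particular an $\MSO$-transduction, and by a classical theorem of Courcelle and Engelfriet bounded cliquewidth is preserved under $\MSO$-transductions: such transductions compose, and bounded cliquewidth is equivalent to $\MSO$-transducibility from the class of trees. Hence any class $\Dd$ with $\Dd\sqsubseteq_{\FO}\Cc$ still has bounded cliquewidth. On the other hand, a class containing a subdivision of every wall cannot have bounded cliquewidth, since walls (and sufficiently regular subdivisions thereof) have cliquewidth tending to infinity with their size. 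The forward direction follows.

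For the backward direction, my strategy mirrors the structural approach developed in the present paper. First I would aim for a structural dichotomy for graphs of large cliquewidth: identify a finite list of semi-induced obstructions whose simultaneous exclusion forces bounded cliquewidth, analogous to the path, the bipartite complement of a path, and the half-graph that obstruct bounded shrubdepth here. Second, via Ramsey-type arguments, I would locate copies of these obstructions at arbitrary scales in every graph belonging to any class of unbounded cliquewidth. Finally, I would design a single $\FO$-transduction that converts such obstructions into subdivisions of walls of unbounded size. A useful stepping-stone is that the main theorem of this paper already yields the class of all paths as an $\FO$-transduction of $\Cc$ (since unbounded cliquewidth implies unbounded shrubdepth); the task then becomes to amplify this one-dimensional transduction into a two-dimensional one, by exploiting the structural richness forced by unbounded cliquewidth beyond unbounded shrubdepth.

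The main obstacle lies precisely in this amplification. In the $\MSO$ setting the analogous implication reduces, via a theorem of Courcelle and Oum, to showing that classes of unbounded cliquewidth contain all grids as vertex-minors, and vertex-minors are expressible by $\CMSO$-transductions. This route is blocked for $\FO$, since vertex-minors provably cannot be $\FO$-defined in general. Thus a genuinely new structural decomposition of dense graphs of large cliquewidth appears necessary, together with a local $\FO$-definable gluing procedure that weaves many path-like substructures into a single grid-like object; recent developments around twin-width and rank-width may furnish the right vocabulary for such a decomposition. As this conjecture strengthens Seese's conjecture in spirit, which has resisted attack for decades, I expect this step to dominate the difficulty of any complete proof.
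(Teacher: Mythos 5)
The statement you were asked about is \cref{conj:FOcw}, which the paper explicitly records as an open conjecture (attributed to~\cite{StableTWW}); the paper offers no proof of it. What the paper actually proves is \cref{conj:FO}, the shrubdepth analogue. So there is no ``paper proof'' to compare against, and a complete proof attempt would be expected to fail --- which you correctly recognize.

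Your treatment of the forward implication is sound: $\FO$-transductions are in particular $\MSO$-transductions, bounded cliquewidth is an $\MSO$-transduction ideal, and any class containing a subdivision of every wall has unbounded cliquewidth (subdivisions of walls have bounded degree and unbounded treewidth, hence unbounded cliquewidth). Your assessment of the backward implication is also accurate: the $\CMSO$ route via vertex-minors and the Courcelle--Oum grid theorem does not transfer to $\FO$, and no $\FO$-analogue of that machinery is currently available. The sketch you give --- find finitely many semi-induced obstructions forcing bounded cliquewidth, then $\FO$-weave them into wall subdivisions --- is a reasonable research programme aligned with the spirit of the present paper, but it is a programme, not a proof; in particular the crucial ``amplification'' from one-dimensional (path) to two-dimensional (wall) structure is not supplied and, as you note, would imply Seese's conjecture. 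In short, your answer correctly establishes the easy direction, correctly diagnoses the hard direction as open, and does not overclaim. There is no gap to point out beyond what you already flag yourself.
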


In this work we prove \cref{conj:FO}. As shown in~\cite{Ganian2017}, the class of all paths does not have bounded shrubdepth, hence it is not \FO-transducible (equivalently, \MSO- or \CMSO-transducible) from any class with bounded shrubdepth (equivalently, from any class of bounded height trees). Therefore, \cref{conj:FO} follows from the next theorem, which is the main result of our paper.

\begin{theorem}\label{thm:main-shb}
	If the class of all paths is not \FO-transducible from a class of graphs $\Cc$, then $\Cc$ has bounded shrubdepth.
\end{theorem}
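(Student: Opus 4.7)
The plan is to argue by contraposition: assuming $\Cc$ has unbounded shrubdepth, I will exhibit an $\FO$-transduction producing all paths from $\Cc$. The whole argument pivots on the graph-theoretic structure theorem announced in the abstract, which states that a graph excluding a long path, the bipartite complement of a long path, and a large half-graph as semi-induced subgraphs admits a partition of its vertex set into a bounded number of parts such that each part induces a bounded-height cograph and each pair of parts semi-induces a bounded-height bi-cograph. Granting this theorem, the first task is to check that its conclusion forces bounded shrubdepth: one constructs a single tree model by taking the bounded-height tree models of the cographs on the parts, the bounded-height tree models of the bi-cographs on each pair of parts, and gluing them under a common root, using the finite label set to record, for every leaf, its part together with its positions in the intra-part and in each inter-part tree model. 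Since both the number of parts and the heights of the constituent models are bounded, the resulting model witnesses bounded shrubdepth.

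Having established that the structural conclusion implies bounded shrubdepth, its contrapositive yields the following dichotomy for $\Cc$: for every $n$, some graph $G \in \Cc$ contains, as a semi-induced subgraph, either a path of length $n$, or the bipartite complement of such a path, or a half-graph of order $n$. Each of these three obstructions $\FO$-transduces to arbitrarily long paths: the semi-induced path is already a path once the correct bipartition and the correct edge relation are selected via unary colors and a quantifier-free interpretation; the bipartite complement of a semi-induced path becomes a path after additionally flipping edges inside the semi-induced bipartition by an $\FO$ formula; and a half-graph $\FO$-transduces to a path by the standard folklore construction, in which one colors a matched pair in each column of the half-graph and uses the nesting order of neighborhoods to connect consecutive matched pairs. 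Because $\FO$-transductions compose, this contradicts the assumption that paths are not $\FO$-transducible from $\Cc$ and completes the reduction.

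The principal obstacle is the structural theorem itself, which carries the real mathematical content of the paper. The plan there, as suggested by the \gtd\ macro appearing in the preamble, is a Gyárfás-type iterative decomposition: peel vertex sets on which, once long semi-induced paths, their bipartite complements, and large half-graphs are forbidden, one can use Ramsey-type arguments to force both the induced structure and the semi-induced structure with already-processed parts to be highly restricted, namely cograph-like and bi-cograph-like respectively. The delicate point is to carry out this peeling so that the number of parts, the heights of the cograph tree models inside parts, and the heights of the bi-cograph tree models between parts are all bounded simultaneously by a single function of the excluded sizes; controlling the second-order interaction between the intra-part and inter-part tree decompositions is where I expect the proof to do its hardest work.
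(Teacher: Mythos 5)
Your high-level outline is reasonable — obtain the structural decomposition and then observe that each of the three obstructions $\FO$-transduces to paths — but there is a genuine gap in the step where you assert that the conclusion of the structure theorem (a bounded-size, bounded-height $2$-cosplit) alone forces bounded shrubdepth. That implication is false. The class of all paths is itself a counterexample: coloring the vertices of $P_{3n}$ by their index modulo $3$ gives a $2$-cosplit of size $3$ and height $2$, since each color class is an independent set and each pair of color classes semi-induces a matching, which is a bi-cograph of height $2$; yet paths have unbounded shrubdepth. Accordingly, the ``gluing'' construction of a single tree model cannot work: the cotrees on parts and the bi-cotrees on pairs of parts share leaves but have independent internal structure, so hanging them under a common root makes every vertex appear about $N$ times as a leaf, and no finite label set can record, for one chosen representative leaf, the internal positions needed to recover the lowest common ancestor in every constituent tree simultaneously (the ``positions'' live in trees of unbounded width).

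The paper avoids this by using the hypothesis that paths are not transducible from $\Cc$ a \emph{second} time, after the structural step. From the $2$-cosplit it sparsifies (\cref{lem:sparsify_cograph,lem:sparsify_bi-cograph,thm:sparsification_cosplit}), producing a class $\Dd$ of degenerate graphs transduction-equivalent to $\Cc$, which encodes the union of the cotree and bi-cotree structures. This $\Dd$ may still contain arbitrarily long paths as subgraphs — exactly what happens for the $P_{3n}$ example above — so degeneracy alone is not enough. The crucial extra step is that since paths cannot be transduced from $\Cc$, they cannot be transduced from $\Dd$ either; hence $\Dd$ excludes some fixed path, and a degenerate class excluding a fixed path has bounded treedepth (by~\cite[Proposition 6.4]{Sparsity}), hence bounded shrubdepth, which transfers back to $\Cc$. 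To repair your argument, carry the non-transducibility assumption through the sparsification step rather than trying to derive bounded shrubdepth from the $2$-cosplit directly.
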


Since every $\FO$-transduction is also an $\MSO$-transduction, 
this also confirms \cref{conj:MSO}. More generally, together with the results of~\cite{Ganian2017}, \cref{thm:main-shb} proves that the property of having bounded shrubdepth is the largest property of graph classes that is closed under taking $\FO$-transductions (is a so-called {\em{$\FO$-transduction ideal}}) and whose restriction to classes that exclude a fixed biclique as a subgraph is the property of having bounded treedepth.

\subsection{Graph-theoretic statements}

The main idea behind the proof of \cref{thm:main-shb} is to analyze graphs that exclude some simple substructures from which long paths can  easily be transduced. These are: paths, complements of paths, and half-graphs. We prove two purely graph-theoretic statements which explain that graphs excluding the substructures mentioned above can be nicely decomposed into cographs. We need a few definitions to state the decomposition theorems formally.

For a graph $G$ and disjoint subsets of vertices $A,B\subseteq V(G)$, by $G[A]$ we denote the subgraph induced by $A$ in $G$, and by $G[A,B]$ we denote the bipartite graph {\em{semi-induced}} by $A$ and $B$ in $G$. The latter is the graph with vertex set $A\cup B$ and edge set comprising of all edges of $G$ with one endpoint in $A$ and second in $B$. By $P_t$, $\ov{P}_t$, and $\ti{P}_t$ we respectively denote the path on $t$ vertices, its complement, and its {\em{bipartite complement}}: the graph obtained from $P_t$ by taking its unique bipartition $A,B$ and complementing the edge relation on the set $A\times B$. We also use the order-$k$ {\em{half-graph}} $H_k$, and the order-$k$ {\em{universal threshold graph}} $R_k$; see~\cref{fig:HkRk}.

\begin{figure}[h!t]
	\centering
	\includegraphics[width=.5\linewidth]{}
	\caption{Left: the half-graph $H_k$. Right: the universal threshold graph $R_k$.}
	\label{fig:HkRk}
\end{figure}

For decompositions we rely on the well-known class of {\em{cographs}} (that is, $P_4$-free graphs) and on their bipartite counterparts, {\em{bi-cographs}} (see \cref{sec:prelims} for a definition).
For a graph $G$, a {\em{cosplit}} of $G$ is a partition $\Pp$ of the vertex set of $G$ such that for every part $A\in \Pp$, the induced subgraph $G[A]$ is a cograph. A cosplit $\Pp$ is moreover a {\em{$2$-cosplit}} if for every pair of distinct parts $A,B\in \Pp$, the semi-induced bipartite graph $G[A,B]$ is a bi-cograph. The {\em{height}} of a cosplit $\Pp$ is the maximum height of the cographs in $\{G[A]\colon A\in \Pp\}$, where the height of a cograph is the height of its cotree. We define the height of a $2$-cosplit analogously, but we also take into account the heights of bi-cographs in $\{G[A,B]\colon A,B\in \Pp,\ A\neq B\}$. Note that the height of a $2$-cosplit $\Pp$ may be larger than the height of $\Pp$ treated as a (standard) cosplit; this distinction will be always clear from the context. The {\em{size}} of a cosplit $\Pp$ is simply $|\Pp|$.
The minimum size of a cosplit of a graph $G$ is its
\emph{$c$-chromatic number} (also called \emph{$P_4$-chromatic number})~\cite{Gimbel20103437}.

Our main decomposition results can be now stated as follows.

\begin{theorem}\label{thm:induced}
For every pair of integers $t,k\in \N$ there exists $N\in \N$  such that the following holds: Every graph that excludes $P_t$, $\ov{P}_t$, and $R_k$ as induced subgraphs admits a cosplit of size at most $N$ and height at most $4k$.
\end{theorem}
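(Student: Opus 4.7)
The plan is to split the proof into two largely independent ingredients and combine them. The first, call it (a), asserts that every $\{P_t,\ov{P}_t\}$-free graph admits a cosplit of size bounded by some $N=N(t)$ into cograph-inducing parts; equivalently, the $c$-chromatic number of such graphs is bounded in $t$. The second, call it (b), asserts that every $R_k$-free cograph has cotree height at most $4k$. Since $R_k$-freeness is hereditary, each cograph part $G[A]$ produced by (a) is itself $R_k$-free, so (b) forces its cotree to have height at most $4k$. Together they give a cosplit of size $N(t)$ and height $4k$, as required by the theorem.

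For ingredient (b), I would argue by contrapositive. Suppose $H$ is a cograph whose cotree $T$ has height at least $4k+1$, and fix any root-to-leaf path $\pi$ in $T$ of that length. After collapsing consecutive same-type internal nodes, the internal nodes along $\pi$ alternate between union nodes and join nodes, so $\pi$ contains comfortably many alternations between the two types. At each selected internal node of $\pi$, choose a witness leaf from a subtree hanging off $\pi$, i.e.\ from a subtree rooted at a sibling of the next node of $\pi$. The resulting list of witnesses, ordered by the depth of their attachment node, induces an $R_k$: a witness attached at a union node is nonadjacent to all deeper witnesses (the union separates the relevant subtrees), while a witness at a join node is adjacent to all deeper witnesses (the join forces full connection). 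This alternating adjacency pattern is precisely the recursive construction of the universal threshold graph $R_k$ by alternately adding isolated and universal vertices; the factor $4$ (rather than a sharper $2$) absorbs the alternation bookkeeping.

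For ingredient (a), I would invoke or reprove the folkloric fact that $\{P_t,\ov{P}_t\}$-free graphs have bounded $c$-chromatic number. A natural route goes via modular decomposition: every graph canonically decomposes into modules whose quotient is either an independent set, a clique, or a prime graph, and a Ramsey-type lemma shows that every sufficiently large prime graph contains $P_t$ or $\ov{P}_t$ as an induced subgraph, so under our assumption all prime quotients have size bounded by some $f(t)$. The subtle point is turning bounded prime quotients into a single bounded overall cosplit: naively combining module cosplits across a prime quotient destroys cograph structure, because a prime quotient on $p$ modules is not itself a cograph. One therefore needs either a careful amortised scheme along the modular decomposition tree, which controls how the prime nodes contribute to the final part count, or a direct Gy\'arf\'as-style iterative extraction of large cograph-inducing sets followed by recursion on the remainder.

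I expect ingredient (a) to be the main obstacle. The cotree argument in (b) is a routine structural computation once the alternating-witness extraction is set up, and the slack factor $4$ comfortably absorbs any off-by-one issues. The bound on the $c$-chromatic number in (a), by contrast, requires genuine combinatorial work: the two exclusions must be exploited symmetrically, the Ramsey-type control on prime quotients must be made quantitative, and the recursive module-combining step needs care to prevent an unbounded blowup of the part count across deep modular decomposition trees.
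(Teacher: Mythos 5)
Your decomposition into ingredients (a) and (b) does not work, because ingredient (a) is false: the $c$-chromatic number of $\{P_t,\ov{P}_t\}$-free graphs is \emph{not} bounded by a function of $t$ alone. The paper itself essentially records the counterexample in the remark following \cref{cor:chi}. The lexicographic powers $C_5^{[n]}$ exclude both $P_5$ and $\ov{P}_5$, yet have $\alpha = \omega = 2^n$ on $5^n$ vertices. If such a graph had a cosplit of size $c$, one part would be an induced cograph on at least $5^n/c$ vertices, hence (cographs being perfect) would contain a homogeneous set of size at least $\sqrt{5^n/c}$; this forces $c \geq (5/4)^n$, which is unbounded. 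Equivalently, bounded $c$-chromatic number would give linear $\chi$-boundedness via $\chi(G)\leq\sum_{X}\omega(G[X])\leq c\,\omega(G)$, contradicting $\chi(C_5^{[n]})\geq\omega(C_5^{[n]})^{\log_2\chi_f(C_5)}$ with exponent $\approx 1.32$. So the exclusion of $R_k$ is indispensable already for bounding the \emph{number} of parts, not only their heights, and there is no way to establish (a) before invoking the $R_k$-freeness.

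Ingredient (b) is essentially fine: an $R_k$-free cograph does have a cotree of height $O(k)$, by your alternating-witness extraction along a root-to-leaf path of the reduced cotree. But since (a) collapses, the two-stage strategy (first partition using only $\{P_t,\ov{P}_t\}$-freeness, then bound heights using $R_k$-freeness) cannot be repaired without fundamentally entangling the two steps. The paper's proof does exactly this entangling: it performs a single induction on a complementation-invariant quantity, the \emph{strong index} (\cref{def:sindex}), which is tied to the largest induced $R_j$ (\cref{fact:sind_induced}). Each inductive step runs a Gy\'arf\'as decomposition on $G$ and then on $\ov{G}$ restricted to level-$1$ bags, using $P_t$- and $\ov{P}_t$-freeness to bound the heights of these decompositions (\cref{fact:height_gtd}); the hooks of the decompositions force the strong index to drop in every non-root bag (\cref{lem:cog1,lem:cog2}). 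Thus both the size bound $N_k \approx 4(2t-5)^{k-1}$ and the height bound on the resulting cographs emerge simultaneously from the recursion in $k$, with $t$ controlling the branching at each level. Your modular-decomposition route for (a) is therefore a dead end, not merely the hardest ingredient.
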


\begin{theorem}\label{thm:semi-induced}
	For every pair of integers $t,k\in \N$ there exists $N\in \N$ 
	such that the following holds: Every graph that excludes $P_t$, $\ti{P}_t$, and $H_k$ as semi-induced subgraphs admits a $2$-cosplit of size at most $N$  and height at most $4k$. 
\end{theorem}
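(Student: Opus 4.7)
The plan is to derive \cref{thm:semi-induced} from \cref{thm:induced} by upgrading the initial cosplit into a $2$-cosplit via a bipartite refinement argument. First I would show that the semi-induced exclusion hypothesis of \cref{thm:semi-induced} implies the induced exclusion hypothesis of \cref{thm:induced} with the same parameters. If $G$ contained $P_t$ as an induced subgraph on vertices $v_1, \ldots, v_t$, then splitting these vertices into the odd- and even-indexed classes $A$, $B$ would give $G[A,B] = P_t$, contradicting the semi-induced $P_t$-freeness of $G$. Analogously, an induced copy of $\ov{P}_t$ semi-induces $\ti{P}_t$ via the same bipartition, and an induced copy of $R_k$ semi-induces $H_k$ by taking alternating ``isolated'' and ``universal'' vertices along the threshold construction order. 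Hence \cref{thm:induced} applies to $G$, yielding a cosplit $V_1, \ldots, V_{N_0}$ with $N_0$ bounded in terms of $t$ and $k$ and height at most $4k$.

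Next I would prove a bipartite analog of \cref{thm:induced}: for every $t, k \in \N$ there exists $M \in \N$ such that every bipartite graph with bipartition $(A,B)$ that excludes $P_t$, $\ti{P}_t$, and $H_k$ as semi-induced subgraphs admits partitions $A = A_1 \sqcup \cdots \sqcup A_p$ and $B = B_1 \sqcup \cdots \sqcup B_q$ with $p, q \leq M$, such that every semi-induced bipartite subgraph on $(A_i, B_j)$ is a bi-cograph of height at most $4k$. The proof would mirror that of \cref{thm:induced} in the bipartite setting: one iteratively extracts ``bi-cograph-like'' layers on the two sides using the absence of semi-induced $P_t$ and $\ti{P}_t$, while the absence of $H_k$ controls the depth at which joins and unions must alternate in the emerging bipartite cotree, with the height bound $4k$ arising from the same counting argument as in the induced case.

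Finally I would apply the bipartite lemma to every pair $(V_i, V_j)$ of parts from the initial cosplit (each pair inherits the three semi-induced exclusions from $G$), obtaining partitions of $V_i$ and $V_j$. For each $i$, the common refinement of all partitions of $V_i$ arising from pairs $(i,j)$ with $j \neq i$ refines $V_i$ into at most $M^{N_0-1}$ pieces, producing a global partition of size $N \leq N_0 \cdot M^{N_0-1}$. Each piece is contained in some $V_i$ and thus induces a subgraph of a cograph of height $\leq 4k$, which is itself a cograph of height $\leq 4k$; and any two pieces from distinct $V_i, V_j$ semi-induce a sub-bipartite-graph of one of the bi-cographs produced by the lemma, hence a bi-cograph of height $\leq 4k$ (since bi-cographs are closed under deleting vertices on either side with non-increasing height). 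This yields the desired $2$-cosplit. The main obstacle is proving the bipartite lemma with the tight height bound $4k$, which requires transferring the depth-tracking argument of \cref{thm:induced} to a bipartite, asymmetric setting, with the role of $R_k$ in controlling cotree height replaced by that of semi-induced $H_k$.
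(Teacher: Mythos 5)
Your reduction scheme is exactly the one the paper uses: observe that the semi-induced hypotheses give the induced hypotheses of \cref{thm:induced} (your three observations about $P_t$, $\ov{P}_t$, $R_k$ are correct), obtain a cosplit $V_1,\dots,V_{N_0}$, state a bipartite analog (this is the paper's \cref{thm:bipartite-depth2}), apply it to each $G[V_i,V_j]$, and take the coarsest common refinement. The bookkeeping in your final paragraph is sound, modulo the small omission that two pieces inside the \emph{same} $V_i$ also need their semi-induced bipartite graph to be a bounded-height bi-cograph --- this follows from \cref{fc:co-bi-co}, since they sit inside the cograph $G[V_i]$.

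The genuine gap is the bipartite lemma itself, which is where essentially all the work lives (the paper devotes an entire section to it). Your sketch says the proof ``mirrors'' that of \cref{thm:induced}, but the mirroring is not mechanical, and the obstacle you name at the end is real. In a {\gtd} of a bipartite graph, \emph{every bag is an independent set} (\cref{lem:bip-gtd}), so the induced subgraph $G[B]$ on a bag $B$ is edgeless and the strong-index recursion of \cref{thm:induced} collapses to nothing. One cannot track an invariant on $G[B]$; one has to track it on the semi-induced graph between $B$ and the union of its descendants of the opposite parity (the graph the paper calls $G_B$), and correspondingly replace strong index by a bipartite-index invariant (\cref{def:bindex}, \cref{lem:bag_ind}, \cref{lem:bag_indc}). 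Moreover, whereas \cref{thm:induced} recurses on $\ov{G}[B]$ for level-$1$ bags, here one recurses on the \emph{bipartite} complement $\ti{G}_B$, after discarding vertices that become isolated, and one must orient the new {\gtd} so that its root lies on the correct side. None of this is visible from ``iteratively extract bi-cograph-like layers on the two sides,'' and without specifying which graph the induction is performed on and which invariant decreases, the induction does not get started. So the reduction to the bipartite case is correct and matches the paper, but the bipartite case is only named, not proved.
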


Thus, \cref{thm:semi-induced} provides a much stronger form of a decomposition --- a $2$-cosplit instead of a cosplit --- at the expense of a stronger assumption about the excluded substructures. We note that our proofs yield bounds $N\leq 4\cdot (2t-5)^{k-1}$ in case of \cref{thm:induced} (for $t\geq 4$) and $N\leq 3^{2^k\cdot t^{3k-2}}$ in case of \cref{thm:semi-induced} (for $t\geq 5$). In the proof of \cref{thm:main-shb} we use only \cref{thm:semi-induced}, but \cref{thm:induced} is actually a stepping stone in the proof of \cref{thm:semi-induced} and provides some interesting corollaries on its own; we will discuss these later.

The proofs of \cref{thm:induced,thm:semi-induced} rely on the influential Gy\'arf\'as' path method~\cite{gyarfas1987problems}. Through a suitable understanding, this method provides a convenient decomposition notion for $P_t$-free graphs, which we call a {\em{\gtd}}. We sometimes consider a \gtd{} of the graph itself and sometimes of its complement; this is why we assume that the graph excludes both $P_t$ and its (bipartite) complement. The main idea is to apply induction where the measure of progress is (roughly) the largest $k$ such that $R_k$ or $H_k$ can be found as a (semi-)induced subgraph. This choice was inspired by the work of Gajarsk\'y et al.~\cite{StableTWW}. 

The proof of \cref{thm:main-shb} proceeds roughly as follows. The left-to-right implication is known~\cite{Ganian2017}, hence it remains to show that if from a class of graphs $\Cc$ one cannot transduce the class of all paths, then $\Cc$ has bounded shrubdepth. The assumption implies that there are some $t,k\in \N$ such that all graphs in $\Cc$ exclude $P_t$, $\ti{P}_t$, and $H_k$ as semi-induced subgraphs. By \cref{thm:semi-induced}, every graph $G\in \Cc$ admits a $2$-cosplit of bounded size and height. This allows us to {\em{sparsify}} $G$, that is, find a sparse graph $G'$ that encodes $G$ in the following sense: both $G'$ can be transduced from $G$ and $G$ can be transduced back from $G'$. Letting $\Dd$ be the class comprised of all graphs $G'$ as above, we still have that from $\Dd$ one cannot transduce the class of all paths, but now moreover we have that $\Dd$ is sparse (formally, it is degenerate). Using known connections between treedepth and the existence of long paths one can now argue that $\Dd$ actually has bounded treedepth. So $\Cc$, being transducible from $\Dd$, has bounded shrubdepth.

\subsection{Other corollaries}

Finally, we discuss several other statements that can be inferred from \cref{thm:induced,thm:semi-induced}. The first one concerns $\chi$-boundedness of the considered graph classes. Recall that a class of graphs $\Cc$ is {\em{$\chi$-bounded}} if there is a function $f\colon \N\to \N$ such that $\chi(G)\leq f(\omega(G))$ for every graph $G\in \Cc$, where $\chi(G)$ and $\omega(G)$ respectively denote the chromatic number and the clique number of $G$. If $f$ can be additionally chosen to be a linear function, then $\Cc$ is {\em{linearly $\chi$-bounded}}. It is known that for every $t\in \N$, graphs excluding $P_t$ as an induced subgraph are $\chi$-bounded~\cite{gyarfas1987problems}. We show that if one additionally excludes the complement of a path and a universal threshold graph, then the resulting class is even linearly $\chi$-bounded.

\begin{corollary}\label{cor:chi}
 For every pair of integers $t,k\in \N$, the class of graphs that exclude $P_t$, $\ov{P}_t$, and~$R_k$ as induced subgraphs is linearly $\chi$-bounded.
\end{corollary}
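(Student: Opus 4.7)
The plan is to reduce the corollary to \cref{thm:induced} together with the classical fact that cographs are perfect. Fix integers $t,k\in\N$, let $N=N(t,k)$ be the constant from \cref{thm:induced}, and let $G$ be any graph excluding $P_t$, $\ov{P}_t$, and $R_k$ as induced subgraphs. First I would apply \cref{thm:induced} to obtain a cosplit $\Pp$ of $G$ of size at most $N$ (the height bound $4k$ is not needed here) so that $G[A]$ is a cograph for every $A\in\Pp$.

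Next I would exploit the fact that every cograph is perfect; in particular $\chi(G[A])=\omega(G[A])\le\omega(G)$ for each $A\in\Pp$. Coloring each $G[A]$ optimally with a private palette of at most $\omega(G)$ colors and taking the disjoint union of these palettes yields a proper coloring of $G$ using at most $N\cdot\omega(G)$ colors. Since $N$ depends only on $t$ and $k$, this shows $\chi(G)\le N\cdot\omega(G)$, which is the desired linear $\chi$-bound.

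There is essentially no obstacle here beyond invoking \cref{thm:induced}; the only ingredient outside the excerpt is the perfection of cographs, which is a folklore consequence of the fact that cographs are closed under complementation and disjoint/join compositions, so both $\chi$ and $\omega$ behave multiplicatively along the cotree in matching ways. If one wants to avoid citing perfection, the same conclusion follows by a direct induction on the cotree height, which is in any case bounded by $4k$ thanks to \cref{thm:induced}.
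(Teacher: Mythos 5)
Your proof is correct and matches the paper's argument essentially verbatim: invoke \cref{thm:induced} to obtain a cosplit of size at most $N$, then use perfection of cographs to conclude $\chi(G)\leq\sum_{A\in\Pp}\chi(G[A])=\sum_{A\in\Pp}\omega(G[A])\leq N\omega(G)$. The only difference is cosmetic — your remark that perfection can be replaced by a cotree induction — and as you note, that induction does not actually need the height bound $4k$.
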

\begin{proof}
	By \cref{thm:induced}, there exists an integer $N$ such that every graph  $G$ that excludes $P_t$, $\ov{P}_t$, and $R_k$ as induced subgraphs has a cosplit $\Pp$ of size at most $N$. As cographs are perfect we have
 \[
 \chi(G)\leq\sum_{X\in\Pp}\chi(G[X])=\sum_{X\in\Pp}\omega(G[X])\leq N\omega(G).\qedhere
 \]
\end{proof}

We remark that in \cref{cor:chi}, excluding a path and an antipath is not sufficient to guarantee linear $\chi$-boundedness, for the following reason.
The lexicographic product of two $P_t$-free graphs is $P_t$-free and, as the complement of the lexicographic product of two graphs is (isomorphic to) the product of their complements, the same holds for $\ov{P}_t$-free graphs.
So, consider a triangle-free graph~$G_t$ with $t$ vertices and fractional chromatic number $\chi_f(G_t)\geq \frac{1}{9}\sqrt{t/\log t}$ (see \cite{R3t}).
Obviously, $G_t$ is $P_{t+1}$-free. The lexicographic powers of $G_t$ are $P_{t+1}$ and $\overline{P}_{t+1}$-free and have chromatic number $\chi(G)\geq \omega(G)^{\log_2\chi_f(G_t)}$ (see, for example, \cite{msrw}). Hence, the exponent of $\omega(G)$ has to grow at least as $(1-o(1))\,\log_2 t$.
For instance,  the lexicographic powers of $C_5$ exclude both~$P_5$ and $\overline{P}_5$ as induced subgraph and satisfy $\chi(G)\geq\omega(G)^c$, where $c=\log_2\chi_f(C_5)=\log_2 5-1\approx 1.32$.
\medskip

The next corollary concerns the Erd\H os-Hajnal property of graphs excluding a universal threshold graph. 
Recall that a graph class $\Cc$ has the {\em{Erd\H os-Hajnal property}} if there exists $\delta>0$ such that every $n$-vertex graph $G\in \Cc$ contains a {\em{homogeneous set}} --- a clique or an independent set --- of size at least $n^{\delta}$. It can be derived from the stable regularity lemma of Malliaris and Shelah~\cite{Malliaris2014} that for every $k\in \N$, graphs excluding $R_k$ as an induced subgraph have the Erd\H os-Hajnal property. See also~\cite{chernikov2015}, and~\cite[Theorem 2.8 in Chapter~5]{notes} for a streamlined presentation\footnote{The statement presented in~\cite{notes} assumes excluding a half-graph as a semi-induced subgraph, but the proof actually works also for excluding a universal threshold graph as an induced subgraph.} yielding $\delta=\frac{1}{2k+2}$. We now show that from \cref{thm:induced} one can infer essentially the same result.

%

\begin{corollary}
Every graph $G$ on $n$ vertices that does not contain $R_k$ as an induced subgraph contains a homogeneous set of size at least $n^{1/2k}/4$.
\end{corollary}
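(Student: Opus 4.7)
The plan is to deduce the corollary from \cref{thm:induced} by a Ramsey-type dichotomy based on whether $G$ contains a long induced path or anti-path. The starting observation is that an induced copy of $P_t$ in $G$ yields an independent set of size $\lceil t/2\rceil$ (by taking every other vertex of the path), and dually an induced copy of $\ov{P}_t$ yields a clique of the same size. Hence it suffices to choose a threshold $t$ so that both sides of the dichotomy force a homogeneous set of size at least $n^{1/(2k)}/4$.

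Concretely, I would set $t:=\lceil 2n^{1/(2k)}\rceil$. If $G$ contains $P_t$ or $\ov{P}_t$ as an induced subgraph, the observation above yields a homogeneous set of size $\lceil t/2\rceil\geq n^{1/(2k)}$ and we are done. Otherwise, $G$ avoids $P_t$, $\ov{P}_t$, and $R_k$ as induced subgraphs, so \cref{thm:induced} (together with the quoted bound $N\leq 4(2t-5)^{k-1}$) produces a cosplit $\Pp$ of $G$ with $|\Pp|\leq N$. By pigeonhole, some part $A\in\Pp$ has $|A|\geq n/N$. Since $G[A]$ is a cograph, a straightforward induction on the cotree (using that $\omega$ is additive under join while $\alpha$ takes the max, and vice versa for disjoint union) gives $\omega(G[A])\cdot\alpha(G[A])\geq |A|$; consequently $G[A]$ contains a homogeneous set of size at least $\sqrt{|A|}\geq\sqrt{n/N}$.

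It then remains to verify the arithmetic. From $t\leq 2n^{1/(2k)}+1$ one derives $2t-5\leq 4n^{1/(2k)}$, hence $N\leq 4^k\cdot n^{(k-1)/(2k)}$, and therefore
\[
\sqrt{n/N}\;\geq\;2^{-k}\cdot n^{(k+1)/(4k)}\;=\;2^{-k}\cdot n^{1/(2k)}\cdot n^{(k-1)/(4k)}.
\]
To conclude $\sqrt{n/N}\geq n^{1/(2k)}/4$, one needs $n^{(k-1)/(4k)}\geq 2^{k-2}$, which holds whenever $n\geq 4^{2k}$; in the remaining range $n<4^{2k}$ the target $n^{1/(2k)}/4$ is itself strictly less than $1$, so any single vertex certifies the bound trivially. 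I expect the only (minor) obstacle to be the parameter calibration: the choice $t\approx 2n^{1/(2k)}$ is the one that balances the $t^{k-1}$ blow-up in $N$ against the square-root loss inherent in extracting a homogeneous set from a cograph part, ensuring that both branches of the dichotomy clear the target with exponent $1/(2k)$ simultaneously.
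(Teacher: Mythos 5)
Your plan is essentially the paper's argument: exclude $P_t$ and $\ov{P}_t$, apply \cref{thm:induced} to get a cosplit into few cograph parts, pigeonhole to find a large part, and extract a homogeneous set of size $\sqrt{m}$ from a cograph on $m$ vertices. The only stylistic difference is the parameter choice: you fix $t$ as a function of $n$ up front and run a two-case dichotomy, whereas the paper sets $t=2h+1$ with $h$ the (unknown) maximum homogeneous set size, observes that $P_{2h+1}$ and $\ov{P}_{2h+1}$ are then automatically excluded, applies \cref{thm:induced} unconditionally, and solves the resulting inequality $n\le 4(4h-3)^{2k-2}h^2<(4h)^{2k}$ for $h$. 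Both variants are legitimate.

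There is, however, a concrete numerical gap. You invoke the bound $N\le 4(2t-5)^{k-1}$ as quoted in the introduction, but the proof of \cref{thm:induced} actually yields $N_{k'}\le 4(2t-5)^{k'-1}$ with $k'$ the \emph{strong index}, and by \cref{fact:sind_induced} excluding $R_k$ only guarantees $\sind(G)\le 2k-1$; so the bound usable under the hypotheses of \cref{thm:induced} is $N\le 4(2t-5)^{2k-2}$. This is precisely the bound the paper's own corollary proof uses (``$p=4(4h-3)^{2k-2}$''); the exponent $k-1$ stated in the introduction appears to be a misprint. With the correct exponent, your choice $t=\lceil 2n^{1/(2k)}\rceil$ gives $2t-5\le 4n^{1/(2k)}$, hence $N\le 4^{2k-1}n^{(k-1)/k}$ and $\sqrt{n/N}\ge n^{1/(2k)}/2^{2k-1}$, which misses the target $n^{1/(2k)}/4$ for every $k\ge 2$. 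The repair is simply to recalibrate $t$: take $t=\lceil n^{1/(2k)}/2\rceil$, so the path/antipath branch still produces $\lceil t/2\rceil\ge n^{1/(2k)}/4$, while $2t-5<n^{1/(2k)}$ now gives $N<4n^{(k-1)/k}$ and $\sqrt{n/N}>n^{1/(2k)}/2$, with the small-$n$ regime handled trivially as you already do. So the strategy is sound; only the constant in $t$ was tuned to the wrong exponent.
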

\begin{proof}
    As shown in the proof, for $t\geq 4$, the value of $N$ in~\cref{thm:induced} can be upper bounded by $4\cdot (2t-5)^{2k-2}$.
	Let $h$ be the maximum size of a homogeneous subset in $G$; we may assume that $h\geq 2$. Obviously, $G$ excludes $P_{2h+1}$ and $\ov{P}_{2h+1}$ as induced subgraphs, for taking every second vertex of such an induced subgraph would yield a homogeneous set of size $h+1$.
	Thus, by~\cref{thm:induced}, $G$ admits a cosplit of size at most $p=4\cdot (4h-3)^{2k-2}$. Hence, $G$ contains an induced cograph with at least $n/p$ vertices. It is well-known that every cograph on $m$ vertices contains a homogeneous set of size at least $\sqrt{m}$, hence $G$ contains a homogeneous set of size at least $\sqrt{n/p}$. It follows that 
	$n/p\leq h^2$, that is, $n\leq 4\,(4h-3)^{2k-2}\,h^2<(4h)^{2k}$. Hence, 
	$h>n^{1/2k}/4$.
\end{proof}



\medskip

Here is another corollary, this time of \cref{thm:semi-induced}. It is easy to see that for every fixed $h\in \N$, cographs of height at most $h$ and bi-cographs of height at most $h$ form classes that are set-defined. Here, a class is \emph{set-defined} if it comprises of induced subgraphs of a single infinite graph that is $\FO$-interpretable in the countable pure set (a set with no relations); see~\cite{disc_arxiv} for a wider discussion. From \cref{thm:semi-induced} it then follows that for every fixed $t,k\in \N$, graphs excluding $P_t$, $\ti{P}_t$ and $H_k$ as semi-induced subgraphs are also set-defined. 
Applying the results of \cite{disc_arxiv}, we conclude that these graphs have {\em{bounded discrepancy}} in the following sense.
 
 \begin{corollary}
 For every pair of integers $t,k\in \N$ there exists a constant $C$ such that every graph $G$ that excludes $P_t$, $\ti{P}_t$ and $H_k$ as semi-induced subgraphs can be $2$-colored in such a way that for every vertex $u$ of $G$, the difference between the numbers of neighbors of $u$ in each of the colors is at most $C$.  
 \end{corollary}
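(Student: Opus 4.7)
The plan is to derive the corollary from \cref{thm:semi-induced} through the framework of set-defined graph classes developed in \cite{disc_arxiv}. First, I apply \cref{thm:semi-induced}: every such graph $G$ admits a $2$-cosplit $\Pp$ of size at most $N=N(t,k)$ and height at most $4k$. Thus $V(G)$ partitions into at most $N$ parts, each inducing a cograph of height at most $4k$, and every pair of parts semi-inducing a bi-cograph of height at most $4k$.

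Second, I would verify that for every fixed $h\in\N$, both the class of cographs of height at most $h$ and the class of bi-cographs of height at most $h$ are set-defined. The canonical interpretation sends each vertex to the sequence of its ancestor-labels in the associated (bi-)cotree, which is an $h$-tuple over a finite alphabet; adjacency of two vertices is then determined by the first coordinate on which the two tuples disagree together with a finite rule attached to that node. Such a recipe is $\FO$-interpretable in the countable pure set, so both classes sit inside set-defined classes.

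Third, to handle the global graph $G$, I would enrich $V(G)$ with $\lceil\log_2 N\rceil$ unary predicates marking the parts of $\Pp$ and with the coordinate-tuples produced in the previous step. Adjacency in $G$ is then expressed by branching on the pair of part-indices of the two endpoints and applying either the corresponding cograph rule (same part) or the corresponding bi-cograph rule (different parts). This gives a fixed $\FO$-interpretation in the countable pure set that witnesses membership of $G^+$ in one set-defined class $\Cc^+$ depending only on $N$ and $4k$; forgetting the auxiliary unary predicates to recover $G$ preserves set-definedness. Hence the class of graphs admitting a $2$-cosplit of size at most $N$ and height at most $4k$ is set-defined.

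Finally, I would invoke the bounded-discrepancy theorem of \cite{disc_arxiv}, which asserts that every set-defined graph class admits a constant $C$ such that every member can be $2$-coloured so that every vertex sees at most $C$ more neighbours in one colour than in the other. Combined with the previous steps, this yields the desired constant $C=C(t,k)$. I expect the main obstacle to be the bookkeeping in the second and third paragraphs: writing down a single $\FO$-interpretation in the pure set that uniformly witnesses set-definedness of the entire decomposed class. Once this interpretation is in hand, the discrepancy result applies as a black box.
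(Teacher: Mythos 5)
Your proposal follows essentially the same route as the paper: apply \cref{thm:semi-induced} to get a bounded-size, bounded-height $2$-cosplit, observe that bounded-height cographs and bi-cographs (and hence graphs admitting such $2$-cosplits) form set-defined classes, and then invoke the bounded-discrepancy theorem of \cite{disc_arxiv} as a black box. The paper leaves the set-definedness step as an assertion (``it is easy to see''), while you sketch the underlying interpretation; the one place to be careful there is that encoding each vertex merely by the sequence of ancestor \emph{labels} does not determine the least common ancestor (two vertices can share a label sequence yet branch at different nodes), so the tuple should record the actual ancestor nodes of the universal bounded-height tree, with the $f$-values attached to those nodes --- a minor repair that does not change the argument.
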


\section{Preliminaries}\label{sec:prelims}

\paragraph*{Graph notation} We use standard notation for graphs. Whenever speaking about bipartite graphs, we always assume that the graph is given together with a fixed {\em{bipartition}}: a partition of the vertex set into two independent sets, called {\em{sides}}. Note that the bipartition is unique if the bipartite graph is connected. 

The {\em{complement}} of a graph $G$, denoted $\ov{G}$, is the graph on the same vertex set where two vertices are adjacent if and only if they are non-adjacent in $G$. The {\em{bipartite complement}} of a bipartite graph $G$, denoted $\ti{G}$, the bipartite graph on the same vertex set as $G$ and with the same bipartition, where two vertices from different sides of the bipartition are adjacent if and only if they were non-adjacent in $G$. Note that the bipartite complement of a bipartite graph may differ from its (standard) complement when treated as a (non-bipartite) graph. This distinction will be always clear from the context.

By $P_t$ we denote the path on $t$ vertices. Consequently, $\ov{P_t}$ and $\ti{P_t}$ are respectively the complement and the bipartite complement of $P_t$ (taken with respect to the unique bipartition). We also consider the {\em{half-graph}} of order $k$, denoted $H_k$, and the {\em{universal threshold graph}} of order $k$, denoted~$R_k$. These are depicted in \cref{fig:HkRk} and defined as follows. In both cases, the vertex set consists of vertices $a_1,\dots,a_k,b_1,\dots,b_k$, there are edges $a_ib_j$ for all $1\leq i\leq j\leq k$, and the vertices $b_1,\ldots,b_k$ form an independent set. The difference is that in $H_k$ the vertices $a_1,\ldots,a_k$ also form an independent set, while in $R_k$ they form a clique. We shall treat $H_k$ as a bipartite graph; the bipartition is $\{\{a_1,\ldots,a_k\},\{b_1,\ldots,b_k\}\}$.

The name of $R_k$ --- the universal threshold graph of order $k$ --- is motivated by the following easy fact. Here, a {\em{threshold graph}} is a graph that can be obtained from the empty graph by iteratively adding a universal vertex or an isolated vertex.

\begin{fact}
	\label{fact:univ_thr}
	Every threshold graph with at most $k$ vertices is an induced subgraph of $R_k$.
\end{fact}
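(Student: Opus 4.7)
The plan is to proceed by induction on $k$, with the inductive claim that every threshold graph on at most $k$ vertices embeds as an induced subgraph of $R_k$; the base case $k=0$ is vacuous. For the inductive step I would peel off the last vertex of a threshold construction of $T$, embed the remainder into a ``shifted copy'' of $R_{k-1}$ sitting inside $R_k$, and then use one of the two leftover vertices $a_1, b_1$ as a slot for the peeled vertex.

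The key observation making this work is that $R_{k-1}$ sits inside $R_k$ as the induced subgraph on $\{a_2,\ldots,a_k,b_2,\ldots,b_k\}$, via the shift $a_i \mapsto a_{i+1}$, $b_i \mapsto b_{i+1}$: edges $a_i b_j$ with $i \leq j$ are preserved, the $a$-clique stays a clique, and the $b$-independent set stays independent. The two vertices $a_1$ and $b_1$ of $R_k$ left outside this copy then play complementary roles: by the definition of $R_k$, the vertex $a_1$ is adjacent to every other vertex of $R_k$ (it lies in the $a$-clique and $1 \leq j$ for all $j$), while $b_1$ is adjacent only to $a_1$ (the $b$'s are independent and $a_i b_1$ is an edge only for $i \leq 1$). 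So, relative to the shifted copy of $R_{k-1}$, the vertex $a_1$ is a ``universal slot'' and $b_1$ an ``isolated slot''.

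The inductive step is then immediate. Given $T$ with $n \leq k$ vertices, let $v$ be its last vertex in a threshold construction and set $T' := T - v$; by the inductive hypothesis $T'$ embeds as an induced subgraph of $R_{k-1}$, and composing with the shift yields an induced embedding of $T'$ into $R_k$ disjoint from $\{a_1, b_1\}$. Sending $v$ to $a_1$ if $v$ was universal in $T$, and to $b_1$ if $v$ was isolated, then extends this to an induced embedding of $T$ into $R_k$, thanks to the two properties of $a_1$ and $b_1$ singled out above. There is no genuine obstacle here: the statement is really a form of universality saying that $R_k$ is itself built by the threshold recipe that alternately prepends an isolated $b_i$ and a universal $a_i$, and hence accommodates every shorter such construction.
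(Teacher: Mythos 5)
The paper states this as an ``easy fact'' and gives no proof, so there is nothing to compare against; but your argument is correct and is the natural one. The shift embedding $a_i\mapsto a_{i+1}$, $b_i\mapsto b_{i+1}$ does realize $R_{k-1}$ as the induced subgraph of $R_k$ on $\{a_2,\dots,a_k,b_2,\dots,b_k\}$, and the two remaining vertices do play the complementary roles you describe: $a_1$ is adjacent to every other vertex of $R_k$, while $b_1$ is adjacent only to $a_1$ and hence isolated relative to the shifted copy. Peeling off the final vertex $v$ of a threshold construction of $T$ and mapping it to $a_1$ or $b_1$ according to whether $v$ was added as a universal or an isolated vertex therefore yields an induced embedding, completing the induction. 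One cosmetic remark: your base case $k=0$ (or, for that matter, the case $n=0$ inside the inductive step) is not vacuous but trivially true --- the only threshold graph on at most $0$ vertices is the empty graph, which embeds in anything; this is worth phrasing as ``trivial'' rather than ``vacuous,'' but the argument is unaffected.
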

%

For a graph $G$ and a subset of vertices $A$, by $G[A]$ we denote the subgraph of $G$ induced by~$A$. For disjoint $A,B\subseteq V(G)$, by $G[A,B]$ we denote the bipartite subgraph {\em{semi-induced}} by $A$ and~$B$: it is the bipartite graph with bipartition $\{A,B\}$ where $u\in A$ and $v\in B$ are adjacent if and only if $u$ and $v$ are adjacent in $G$. Note that thus, $A$ and $B$ are independent sets in~$G[A,B]$.

\paragraph*{Shrubdepth, cographs and bi-cographs}  
A \emph{rooted tree} is a tree with a distinguished vertex, its \emph{root}. 
The \emph{height} of a rooted tree is the maximum number of edges in a root-to-leaf path. Every rooted tree $T$ defines a partial order $\preceq_T$ on its vertex set: $u\preceq_T v$ if $u$ and $v$ the (unique) path from the root to $v$ contains $u$. If $u\preceq_T v$ we say that $u$ is an \emph{ancestor} of $v$ in $T$. The \emph{least common ancestor} $u\wedge_T v$ of two vertices  is the maximum vertex $w$ (with respect to $\preceq_T$) with $w\preceq_T u$ and $w\preceq_T v$. For a rooted tree $T$, we denote by $L(T)$ the set of \emph{leaves} of $T$, which are the vertices of $T$ that are maximal in 
$\preceq_T$, and by $I(T)$ the set of internal vertices of $T$, which is defined by $I(T)=V(T)\setminus L(T)$.

The following notion of a tree model generalizes the notion of a cotree of a cograph and will allow a unified approach to cographs, bi-cographs, and  shrub-depth.
Let $k,h$ be  positive integers. A \emph{$k$-colored height-$h$ tree model} is a
triple $(T,c,f)$, where $T$ is a rooted tree with height at most $h$, $c\colon L(T)\to [k]$ is a coloring of the leaves of $T$, and $f\colon I(T)\times[k]
\times [k]\to\{0,1\}$ is a function that is symmetric on its two last arguments: $f(x,y,z)=f(x,z,y)$.
A $k$-colored height-$h$ tree $(T,c,f)$
defines a graph $G$ with vertex set $L(T)$ and edge set 
\[
E(G)=\{(u,v)\in V(G)\times V(G)\colon u\neq v\text{ and }f(u\wedge_T  v,c(u),c(v))=1\}.
\]

In this setting, we have:
\begin{itemize}
	\item  A class $\Cc$ has \emph{bounded  shrub-depth} if there exist integers $k,h$ such that every graph in $\Cc$ is defined by a $k$-colored height-$h$ tree model. 
\item A graph is a \emph{cograph} \cite{Corneil1981} (with \emph{height} at most $h$) if it is defined by a $1$-colored (height-$h$) tree model. In this case, the tree model is called a \emph{cotree}.
\item A bipartite graph is a \emph{bi-cograph} \cite{Giakoumakis1997} (with \emph{height} at most $h$) if it is defined by a $2$-colored (height-$h$ tree) model in which the coloring $c$ is  the coloring defined by the bipartition of the vertex set.
(Thus, $f(x,1,1)=f(x,2,2)=0$ for every internal vertex $x$ of the tree model.) In this case, the tree model is called a \emph{bi-cotree}.
\end{itemize}

It is instructive to take a closer look at the combinatorics of cotrees and bi-cotrees. Suppose~$T$ is a cotree of a cograph $G$ and $x$ is an internal vertex of $T$. Then the subtree of $T$ rooted at $x$ defines the subgraph of $G$ induced by the leaves of $T$ that are descendants of $x$. Further, depending on whether $f(x,1,1)=0$ or $f(x,1,1)=1$, this subgraph is either the disjoint union or the join of subgraphs defined by the subtrees rooted at the children of $x$. Here, the {\em{join}} of a collection of graphs is obtained by taking their disjoint union and making every pair of vertices originating from different graphs adjacent. For bi-cotrees we have a similar characterization, except that the cases $f(x,1,2)=f(x,2,1)=0$ and $f(x,1,2)=f(x,2,1)=1$ correspond to taking the bipartite disjoint union or the bipartite join of bipartite graphs, defined analogously.

We also have the following.

\begin{fact}\label{fc:co-bi-co}
A  graph $G$ is a bi-cograph with height $h$ if and only if it is a semi-induced 
subgraph of a cograph with height $h$.
\end{fact}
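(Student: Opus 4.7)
My plan is to prove both directions by small surgeries on the underlying tree model, so no combinatorial measure of progress is needed; the heights are preserved automatically because we neither grow the tree nor change the ancestor relation.

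For the forward direction (every height-$h$ bi-cograph is a semi-induced subgraph of a height-$h$ cograph), suppose $G$ has bipartition $\{A,B\}$ and is defined by a height-$h$ bi-cotree $(T,c,f)$ with $c(u)=1$ for $u\in A$ and $c(u)=2$ for $u\in B$. I will keep the same tree $T$, recolor all leaves with the single color $1$, and define $f'(x,1,1):=f(x,1,2)$ for every internal vertex $x$. The triple $(T,\mathbf{1},f')$ is then a $1$-colored height-$h$ tree model, so it defines a cograph $H$ on the vertex set $L(T)=A\cup B$ with height $\leq h$. For $u\in A$ and $v\in B$, an edge in $H$ exists iff $f'(u\wedge_T v,1,1)=1$ iff $f(u\wedge_T v,1,2)=1$ iff $uv\in E(G)$. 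Hence $G=H[A,B]$ as bipartite graphs, and the direction is complete. (What $H$ looks like inside $A$ or inside $B$ is irrelevant because we only care about the semi-induced subgraph.)

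For the backward direction, suppose $G=H[A,B]$ where $H$ is a height-$h$ cograph defined by a cotree $(T,\mathbf{1},f)$ and $A,B\subseteq L(T)=V(H)$ are disjoint. I will form $T'$ from $T$ by restricting to the set of vertices that are ancestors of some leaf in $A\cup B$; the rooted-tree structure of $T$ induces a rooted-tree structure on $T'$ in which the leaves are exactly $A\cup B$, and the $T'$-ancestor relation on $A\cup B$ coincides with the $T$-ancestor relation. In particular, $u\wedge_{T'}v=u\wedge_T v$ for all $u,v\in A\cup B$, and the height of $T'$ is at most $h$. Define $c'(u)=1$ for $u\in A$, $c'(u)=2$ for $u\in B$, and for each internal vertex $x$ of $T'$ set $f'(x,1,1)=f'(x,2,2)=0$ and $f'(x,1,2)=f'(x,2,1)=f(x,1,1)$. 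Then $(T',c',f')$ is a valid bi-cotree of height at most $h$, and by the same LCA calculation the bipartite graph it defines is precisely $G$.

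The only place one needs to exercise a little care is the passage to $T'$: the restriction may produce internal vertices of $T'$ with a single child (those $x\in V(T)$ for which only one child retains a descendant in $A\cup B$). This is harmless, since the definition of a $k$-colored tree model places no constraint on internal degrees and never increases height, so I do not need to contract such vertices. With that observation the two constructions are truly inverse to each other up to restriction, and the fact follows.
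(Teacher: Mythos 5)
Your proof is correct and follows essentially the same approach as the paper: both directions proceed by translating between a $1$-colored and a $2$-colored tree model on the same tree, defining $f'(x,1,2)=f(x,1,1)$ (or its inverse). The only difference is presentational --- where the paper briskly says ``by considering an induced subgraph if necessary, we may assume $V(G)=V(H)$,'' you spell out the restriction of the tree to ancestors of $A\cup B$ and note that possible unary internal nodes are harmless; this extra care is valid but not a different argument.
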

\begin{proof}
	Assume $G$ is a semi-induced subgraph of a cograph $H$ with height $h$. By considering an induced subgraph if necessary, we may assume $V(G)=V(H)$.
	Let $(T,c,f)$ be a $1$-colored height-$h$ tree model of $H$ and let $A$ and $B$ be the sides of $G$.
	Let $c'\colon V(G)\rightarrow[2]$ be defined by $c'(v)=1$ if $v\in A$ and $c'(v)=2$, otherwise. Let $f'\colon I(T)\times[2]\times[2]\to\{0,1\}$ be defined by setting $f'(x,1,1)=f'(x,2,2)=0$ and $f'(x,1,2)=f'(x,2,1)=f(x,1,1)$, for every internal vertex $x$ of $T$. Then $(T,c',f')$ is a $2$-colored height-$h$ tree model of $H[A,B]$, that is, of $G$.
   
   Conversely, assume $G$ is a bi-cograph with sides $A$ and $B$ and a $2$-colored height-$h$ tree model $(T,c',f')$. Let $c\colon L(T)\to [1]$ be the constant function, and let $f\colon I(T)\times[1]\times[1]\to\{0,1\}$ be defined by setting $f(x,1,1)=f'(x,1,2)$ for every internal vertex $x$ of $T$. Then $(T,c,f)$ is a $1$-colored height-$h$ tree model of a cograph $H$ with $H[A,B]=G$.
\end{proof}

%

\paragraph*{Transductions} 
We start by recalling some general definitions on relational structures. Then, we will focus on (uncolored or colored) graphs.

Recall that a \emph{(relational) signature} is a set $\sigma$ of relation symbols, each with a prescribed arity. 
To a signature $\sigma$ we associate the signature $\sigma^+$ obtained by adding to $\sigma$ a countable set of unary predicates $\{P_i\colon i\in \N\}$.
In this paper we consider only the signature $\sigma_0$ of (uncolored) graphs, which consists of a single relation symbol $E$ with arity $2$, and the signature $\sigma_0^+$, where the added predicates distinguish subsets of vertices. We often call those subsets {\em{colors}}, but mind that we do not assume that they are disjoint: a vertex can be assigned any subset of colors.
Thus, uncolored graphs are $\sigma_0$-structures and colored graphs are $\sigma^+$-structures.

Let $\sigma$ be a signature of relational structures.
A \emph{coloring} of a $\sigma$-structure $\mathbf A$ is any $\sigma^+$-structure obtained from $\mathbf A$ by interpreting the predicates $\{P_i\colon i\in \N\}$ in any way\footnote{This mapping can be arbitrary. In particular, the predicates do not need to be defined by any formula.}.



Let $\sigma,\tau$ be two signatures.
For a $\sigma$-structure $\mathbf A$ and a formula $\varphi$ with $k$ free variables, we define
\[
\varphi(\mathbf A)=\{(v_1,\dots,v_k)\in A^k\colon \mathbf A\models \varphi(v_1,\dots,v_k)\}.
\]
A \emph{simple interpretation} of $\tau$-structures in $\sigma$-structures is a tuple
$\mathsf I=(\rho_R)_{R\in\tau\cup\{0\}}$ of first-order formulas such that $\rho_0$ has a single variable and $\rho_R$ has $k$ free variables if $k$ is the arity of $R$. For a $\sigma$-structure $\mathbf A$, the $\tau$-structure $\mathsf I(\mathbf A)$ has domain
$\rho_0(\mathbf A)$ and is such that for each $R\in\tau$ with arity~$k_R$ we have
$R(\mathbf B)=\rho_r(\mathbf A)\cap \rho_0(\mathbf A)^{k_R}$.

\pagebreak
A \emph{(non copying) transduction}\footnote{A more general notion of a transduction allows the so-called \emph{copying} operation: blowing up every vertex into a bounded size clique. In this paper we prove that the class of all paths can be obtained from any class with unbounded shrubdepth using a non-copying transduction, hence there is no need for this extension.} 
$\Trs$ is defined by 
an interpretation $\mathsf I$ in colored graphs. The semantic is as follows: for an uncolored graph $G$, we set
\[
\Trs(G)=\{\mathsf I\circ\Lambda(G)\colon \Lambda\text{ is a coloring of }G\}.
\]

In other words, one can think of a transduction as of a non-deterministic mechanism that inputs a graph, colors it arbitrarily, and applies a fixed interpretation at the end. Then $\Trs(G)$ is the set of  all graphs that can be obtained in this manner. Note that even though there exist infinitely many colorings of $G$, only a finite numbers of predicates appear in the formulas defining the interpretation $\mathsf I$. Hence, for a graph $G$, the set $\Trs(G)$ is always~finite.

For a class of graphs $\Cc$ we define $\Trs(\Cc)=\bigcup_{G\in\Cc}\Trs(G)$. The terminology above can be lifted to colored graphs (and in fact, to relational structures) in the expected manner; we assume here that all unary predicates introduced in the coloring step are distinct from all unary predicates already used in the graph.

We note that the composition of two transductions is a transduction.
A class of graphs  $\Dd$ can be \emph{transduced} from a class of graphs $\Cc$ if there exists a transduction $\Trs$ such that $\Dd\subseteq \Trs(\Cc)$.
Two classes $\Cc$ and $\Dd$ are called \emph{transduction-equivalent} if $\Dd$ can be transduced from $\Cc$ and $\Cc$ can be transduced from $\Dd$.

We shall consider a very restricted type of interpretations.
\begin{itemize}
	\item 
A formula $\eta(x,y)$ in the signature of (colored) graphs naturally defines a simple interpretation~$\mathsf I_\eta$ of (colored) graphs in (colored) graphs, where $\rho_0$ is a tautology (i.e. a formula that is always satisfied), $\rho_E(x,y)\coloneqq (x\neq y)\wedge(\eta(x,y)\vee\eta(y,x))$, and (if the target structure is colored) $\rho_{P_i}(x)\coloneqq P_i(x)$.
(Note that the formula $\eta$ can make use of the predicates $P_i$ if the source graphs are colored.)

\item 
A \emph{simple equivalence} of two classes $\Cc$ and $\Dd$ of (colored) graphs is a pair $(\varphi,\psi)$ of first-order formulas such that $\mathsf I_\varphi$ is a bijection from $\Cc$ to $\Dd$ with the inverse mapping being $\mathsf I_\psi$. That is, $\mathsf I_\psi\circ \mathsf I_\varphi$ is the identity on $\Cc$.
\end{itemize}

Note that the existence of a simple equivalence between two classes $\Cc$ and $\Dd$ is a very strong form of transduction equivalence.

\section{\gtd}

As explained in \cref{sec:intro}, our main tool will be the Gy\'arf\'as' path argument~\cite{gyarfas1987problems}. For convenience, we encapsulate it in an abstract notion of a {\em{\gtd}}. 
We shall see that this decomposition is very close
to the one used to define treedepth and, more generally, to the
property of Tr\'emaux trees (a.k.a.\ Depth-First Search trees).

\begin{definition}
	\label{def:dec}
	A \emph{\gtd} of a connected graph $G$ is a
	rooted tree $Y$ satisfying the following properties:
	\begin{enumerate}
		\item The nodes of $Y$ are pairwise disjoint and non-empty subsets of $V(G)$, called {\em{bags}}, whose union is equal to $V(G)$. In other words, the node set of $Y$ is a partition of $V(G)$.
		\item The root bag of~$Y$ consists of a single
		vertex, called the \emph{initial vertex} of the {\gtd} $Y$.
		\item\label{enum:DFS} 
		If vertices $u,u'\in V(G)$ are adjacent in $G$ and $B,B'$ are bags of $Y$ such that $u\in B$ and $u'\in B'$, then one of the bags $B,B'$ is an ancestor of the other in $Y$. (Note that possibly $B=B'$, as every node is an ancestor of itself.)
		\item\label{enum:con} For every bag $B$ of $Y$, the subgraph of $G$ induced by the union of $B$ and all its descendants in $Y$ is connected.
		\item\label{enum:hook} For every non-root bag $B$ of $Y$, there exists a
		vertex $h(B)$, called the \emph{hook} of $B$, which belongs to the parent of $B$ in $Y$ and satisfies the following property: $h(B)$
		is adjacent to all the vertices of $B$ and non-adjacent to all the vertices contained in strict descendants of $B$.
	\end{enumerate}
	A {\gtd} of a disconnected graph $G$ is the union of \gtd s of the connected components of $G$.
\end{definition}
%

We first note that every graph has a {\gtd} and that such a decomposition can be efficiently computed. The proof closely follows the construction of Gy\'arf\'as' presented in~\cite{gyarfas1987problems}.

\begin{lemma}
	\label{lem:GTalgo}
	For every connected graph $G$ and vertex $r$ of $G$, there exists a {\gtd} of $G$ whose initial vertex is $r$. Moreover, given $G$ and $r$, such a {\gtd} can be computed in polynomial time.
\end{lemma}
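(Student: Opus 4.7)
The plan is to construct the decomposition recursively in a manner that closely mirrors Gy\'arf\'as' original path-peeling argument: at each level the next bag consists of all vertices adjacent to a prescribed hook, and we then descend into the connected components of the remainder.

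Formally, I would introduce an auxiliary procedure $\mathrm{build}(X,h)$ defined whenever $X\subseteq V(G)$ is such that $G[X]$ is connected and $h\in V(G)\setminus X$ has at least one neighbor in $X$. The root of the subtree returned by this procedure is the bag $B:=N_G(h)\cap X$; then, for each connected component $D$ of $G[X\setminus B]$, we pick an arbitrary vertex $h_D\in B$ with a neighbor in $D$ (such a vertex exists because $G[X]$ is connected) and recursively attach $\mathrm{build}(D,h_D)$ below $B$. The overall {\gtd} of $G$ is then obtained by taking $\{r\}$ as the root bag and, for each connected component $C$ of $G-r$, attaching the subtree $\mathrm{build}(C,r)$ below the root.

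The verification then proceeds by checking the five conditions of \cref{def:dec}. Conditions~1 and~2 are immediate from the construction. The hook condition~5 is satisfied by the very choice $B=N_G(h)\cap X$: this set is exactly the vertices in $X$ adjacent to $h$, while all strict descendants of $B$ lie in $X\setminus N_G(h)$ and are therefore non-adjacent to $h$. The connectivity condition~4 holds because, by the recursive invariant, the subgraph induced by $B$ together with its descendants is exactly $G[X]$, which is connected by assumption. The delicate point is the ancestry condition~3: given $uu'\in E(G)$, consider the lowest common ancestor $P$ of the bags $B_u$ and $B_{u'}$ in the resulting tree. If neither $B_u$ nor $B_{u'}$ equals $P$, then $u$ and $u'$ must lie in distinct connected components of $G[X\setminus P]$, where $X$ is the vertex set at the recursive call producing $P$; but this contradicts $uu'\in E(G)$. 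Hence one of $B_u,B_{u'}$ must coincide with $P$, making it an ancestor of the other.

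Termination is immediate because $B$ is non-empty whenever $h$ has a neighbor in $X$, so $|X\setminus B|<|X|$ at every recursive call, and the total number of bags is bounded by $|V(G)|$. For the complexity claim, each recursive call performs only a neighborhood intersection and a connected-components computation, so the whole procedure runs in polynomial time. The only point requiring genuine attention is the ancestry argument sketched above; the rest of the verification is bookkeeping dictated by the definitions and by the way the bags are built.
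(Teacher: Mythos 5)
Your construction is essentially identical to the paper's: a recursive peeling where the next bag is the hook's neighborhood within the current connected piece, followed by descent into the remaining components. The only difference is cosmetic (you pass the hook down and compute the bag, whereas the paper passes the bag down and remembers the chosen hook), and you additionally spell out the verification of the five conditions that the paper dismisses as ``easily checked,'' which is a welcome completion and is argued correctly.
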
	
\begin{proof}
    Consider the following recursive procedure that given a connected graph $G$ and a non-empty subset of vertices $A$,
    constructs a 	
	decomposition of $G$ into a tree of bags with $A$ being the root bag as follows. Consider the connected components $C_1,\dots,C_\ell$ of
	$G-A$. 
	For each $1\leq i\leq \ell$ choose an arbitrary vertex $v_i\in A$
	with a neighbor in $C_i$. Such a vertex must exist as $G$ is connected. 
	For each  $1\leq i\leq \ell$, apply the procedure recursively to $C_i$ (which is connected by definition) and the subset 
	$A_i=N(v_i)\cap V(C_i)\subseteq V(C_i)$. Finally, combine the obtained decompositions by attaching their root bags $A_i$ as children of a new root bag $A$. 
	It is easily checked that if we apply the procedure to $G$ and $A=\{r\}$, then the obtained
	decomposition is a \gtd of~$G$; the vertices $v_i$ serve as hooks. Also, the procedure clearly runs in polynomial time.
\end{proof}

%
%

Let $Y$ be a {\gtd} of a graph $G$. 
The \emph{level} of a bag $B$ in $Y$ is the length of the path in~$Y$ linking $B$ to a root bag.
In particular, every root bag has level $0$. The \emph{height} of a {\gtd} $Y$ is just the height of $Y$ treated as a rooted forest; equivalently, it is the maximum level among the bags of $Y$. The key observation of Gy\'arf\'as is that excluding a path as an induced subgraph gives an upper bound on the height of a {\gtd}.

\begin{lemma}	\label{fact:height_gtd}
	Let $Y$ be a {\gtd} of a $P_t$-free graph $G$. Then the height of $Y$ is at most $t-2$.
\end{lemma}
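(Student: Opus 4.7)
The plan is to argue by contradiction: suppose $Y$ has height at least $t-1$, and from this extract an induced $P_t$ in $G$. Since a Gyárfás decomposition of a disconnected graph is, by definition, the disjoint union of decompositions of its components, it suffices to treat the connected case.

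Assume $G$ is connected and that some bag of $Y$ lies at level $t-1$. Then there is a root-to-bag chain $B_0 \cover B_1 \cover \cdots \cover B_{t-1}$ in $Y$, where $B_0$ is the root bag. For each $i \in \{1,\dots,t-1\}$ set $h_i := h(B_i)$, the hook of $B_i$, which by \cref{def:dec}(\ref{enum:hook}) lies in $B_{i-1}$. Also fix an arbitrary vertex $v \in B_{t-1}$; such a vertex exists since bags are non-empty. The plan is to verify that the sequence $h_1, h_2, \dots, h_{t-1}, v$ induces a path of length $t-1$ in $G$, contradicting $P_t$-freeness.

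The verification is a direct application of the hook property. For consecutive entries: $h_{i+1} \in B_i$, and $h_i$ is adjacent to every vertex of $B_i$, so $h_ih_{i+1}$ is an edge; the same reasoning shows $h_{t-1}v$ is an edge. For non-consecutive entries with $j > i+1$: the bag $B_{j-1}$ containing $h_j$ is a strict descendant of $B_i$, so by the hook property $h_i$ is non-adjacent to $h_j$. Likewise $v \in B_{t-1}$, a strict descendant of $B_i$ for every $i < t-1$, so $h_iv$ is a non-edge. The resulting induced $P_t$ contradicts the hypothesis.

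There is no real obstacle here: the whole argument is a careful bookkeeping of the hook condition, which was designed precisely so that hooks along a root-to-leaf chain form an induced path. The only subtlety worth stating explicitly is the need for the bags to be pairwise disjoint (so that the $h_i$'s and $v$ are pairwise distinct), which is guaranteed by \cref{def:dec}(1). No use is made of connectivity of bag-unions (\cref{def:dec}(\ref{enum:con})) or of the ancestor property (\cref{def:dec}(\ref{enum:DFS})) for this particular lemma.
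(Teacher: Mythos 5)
Your proof is correct and uses exactly the same construction as the paper: along a longest root-to-leaf chain of bags, the hooks $h(B_1),\dots,h(B_\ell)$ together with an arbitrary vertex of the deepest bag form an induced path, with adjacency between consecutive entries and non-adjacency elsewhere both following from \cref{enum:hook} of \cref{def:dec}. The paper phrases this directly (an induced $P_{\ell+1}$ forces $\ell \leq t-2$) rather than by contradiction, but the content is identical.
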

\begin{proof}
	Let $B_0-B_1-\dots-B_\ell$ be the longest root-to-leaf path in $Y$, where $B_0$ is a root bag. Let~$v_\ell$ be any vertex of $B_{\ell}$, and for $0\leq i< \ell$, let $v_i=h(B_{i+1})$. By \cref{enum:hook} of \Cref{def:dec}, vertices $v_0,v_1,\dots,v_\ell$ induce a path in $G$. Thus $\ell+1<t$, implying that the height of $Y$ is at most~$t-2$.
\end{proof}

{\gtd}s of bipartite graphs will be of prime importance in this paper. In this setting, it is worth noticing the next property  (see \cref{fig:gt}).

\begin{lemma}\label{lem:bip-gtd}
	Let $G$ be a connected bipartite graph, and let $Y$ be a {\gtd} of~$G$. Then every bag of $Y$ is an independent sets in $G$. Moreover, vertices in bags of odd levels belong to one side of the bipartition, and vertices in bags of even levels belong to the other side of the bipartition.
\end{lemma}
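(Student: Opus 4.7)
The plan is to argue both statements simultaneously by induction on the level $\ell$ of a bag in $Y$, using the hook property (item~\ref{enum:hook} of \cref{def:dec}) as the sole driver. The crucial observation is that for a non-root bag $B$, its hook $h(B)$ is adjacent to every vertex of $B$; in a bipartite graph this immediately forces all of $B$ to lie on the side of the bipartition opposite to the side containing $h(B)$. Hence $B$ lies entirely in one color class and is, in particular, an independent set.

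Concretely, I would fix the (unique, since $G$ is connected) bipartition $\{A_0,A_1\}$ of $G$ and assume without loss of generality that the initial vertex $r$, which is the unique element of the root bag, lies in $A_0$. The base case $\ell=0$ is then trivial: $\{r\}$ is independent and contained in $A_0$. For the inductive step, consider a bag $B$ at level $\ell\geq 1$ with parent $B'$ at level $\ell-1$. By the inductive hypothesis, $B'\subseteq A_{(\ell-1)\bmod 2}$, so $h(B)\in A_{(\ell-1)\bmod 2}$. Since $h(B)$ is adjacent in $G$ to every vertex of $B$ and $G$ is bipartite, we conclude $B\subseteq A_{\ell\bmod 2}$. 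This simultaneously yields that $B$ is independent and that bags at even (resp.\ odd) levels lie on the same side of the bipartition, which is the claim.

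I do not anticipate any real obstacle in this argument: the lemma is a direct consequence of the hook condition together with bipartiteness, and properties \ref{enum:DFS} and \ref{enum:con} of \cref{def:dec} are not needed. The only detail worth stressing is that connectedness of $G$ is used exactly to ensure that the bipartition is well defined and common to all bags, so that the parity statement makes unambiguous sense.
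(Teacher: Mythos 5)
Your proof is correct and is essentially the same as the paper's: both rest solely on the hook condition (every non-root bag lies in the neighborhood of its hook, hence in the opposite side of the bipartition from its parent) combined with bipartiteness, with the root bag as the trivial base case; you merely phrase it as an explicit induction on the level.
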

\begin{proof}
	The root bag consists of one vertex, hence it is obviously an independent set contained in one side of the bipartition. Further, every other bag is included in the neighborhood of its hook, hence it is an independent set contained in the other side than the hook. The claim follows.
\end{proof}

\begin{figure}[th]
	\centering
	\includegraphics[width=0.8\linewidth]{}
	\caption{Example of a {\gtd} of a connected bipartite graph. All bags are independent sets.}
	\label{fig:gt}
\end{figure}

\section{Constructing a cosplit: proof of \cref{thm:induced}}


In this section we prove \cref{thm:induced}. The main idea is to apply induction on the maximum $k$ such that $R_k$ is an induced subgraph of the considered graph. As in the induction we will often complement the considered graph, it would be convenient if this parameter would not change under complementation. This is not exactly the case, as $R_k$ is not invariant under complementation. For this technical reason, our induction will use the complementation-invariant notion of a {\em{strong index}} defined below.

\begin{definition}\label{def:sindex}
	The \emph{strong index} of a graph $G$, denoted $\sind(G)$, is the
	maximum integer $k$ such that $G$ contains distinct vertices
	$a_1,\dots,a_k,b_1,\dots,b_k$ such that for all $1\leq i<j\leq k$,
	the vertex $a_i$ is adjacent to the vertex $b_j$, the vertex
	$b_i$ is not adjacent to the vertex $a_j$, the vertices $a_1,\ldots,a_k$ form a clique,
	and the vertices $b_1,\ldots,b_k$ form an independent set (see \cref{fig:sindex}).
\end{definition}

\begin{figure}[h!t]
	\centering
	\includegraphics[width=.5\linewidth]{}
	\caption{A configuration witnessing $\sind(G)\geq k$.}
	\label{fig:sindex}
\end{figure}

Note that in the above definition, we do not impose any condition on the adjacency between vertices $a_i$ and $b_i$, for $1\leq i\leq k$. For this reason, we have the following simple observation.

\begin{fact}
	\label{fact:sindc}
	For every graph $G$ we have $\sind(\ov{G})=\sind(G)$.
\end{fact}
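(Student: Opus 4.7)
The proof will be a direct verification by exhibiting an explicit bijection between the witnessing configurations in $G$ and in $\ov{G}$. The key observation is that the definition of $\sind$ is nearly symmetric in the roles of the $a_i$'s and $b_i$'s: the $a_i$'s play the role of a clique with certain outgoing edges to the $b_j$'s (for $i < j$), while the $b_i$'s play the role of an independent set with certain non-edges to the $a_j$'s (for $i < j$). Complementing the graph swaps ``clique'' with ``independent set'' and ``adjacent'' with ``non-adjacent,'' which suggests the correct bijection: take each witness for $G$ and swap the labels.

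Concretely, the plan is as follows. Suppose $\sind(G) \geq k$, witnessed by vertices $a_1,\dots,a_k,b_1,\dots,b_k$ as in \Cref{def:sindex}. In $\ov{G}$, consider the sequences defined by $a'_i \coloneqq b_i$ and $b'_i \coloneqq a_i$ for $1 \leq i \leq k$. I will verify the four conditions of \Cref{def:sindex} for $\ov{G}$ with these sequences: (i) the vertices $a'_1,\dots,a'_k = b_1,\dots,b_k$ form an independent set in $G$, hence a clique in $\ov{G}$; (ii) the vertices $b'_1,\dots,b'_k = a_1,\dots,a_k$ form a clique in $G$, hence an independent set in $\ov{G}$; (iii) for $i < j$, the pair $a'_i b'_j = b_i a_j$ is non-adjacent in $G$ (by the ``$b_i$ not adjacent to $a_j$'' clause), hence adjacent in $\ov{G}$; (iv) for $i < j$, the pair $b'_i a'_j = a_i b_j$ is adjacent in $G$ (by the ``$a_i$ adjacent to $b_j$'' clause), hence non-adjacent in $\ov{G}$. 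This yields $\sind(\ov{G}) \geq k$, so $\sind(\ov{G}) \geq \sind(G)$.

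The reverse inequality $\sind(G) \geq \sind(\ov{G})$ follows by applying the same argument to $\ov{G}$ in place of $G$, using $\ov{\ov{G}} = G$. Combining the two inequalities gives the claimed equality. There is no real obstacle here: the statement is essentially a syntactic observation about the definition, and the only thing to be careful about is that the absence of an adjacency condition between $a_i$ and $b_i$ (for the same index $i$) is crucial, since this is exactly the freedom that allows the roles to be swapped under complementation.
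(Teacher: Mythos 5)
Your proof is correct and takes essentially the same approach as the paper: both use the involutivity of complementation to reduce to one inequality, and both witness $\sind(\ov{G})\geq k$ by swapping the roles of the $a$'s and $b$'s (setting $a'_i=b_i$, $b'_i=a_i$ with the index order preserved). You merely spell out the four-condition verification that the paper leaves implicit.
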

\begin{proof}
	As $G\mapsto\overline{G}$ is an involution, it suffices to prove $\sind(\ov{G})\geq\sind(G)$.
	Let $a_1,\dots,a_k$, $b_1,\dots,b_k$ witness  $\sind(G)=k$.
	Then $b_1,\dots,b_k,a_1,\dots,a_k$ witness $\sind(\overline{G})\geq k=\sind(G)$.
\end{proof}

Next, we note that the strong index is functionally equivalent to the largest order of a universal threshold graph that can be found as an induced subgraph.

\begin{fact}	\label{fact:sind_induced}
	Let $G$ be a graph.
	The maximum integer $k$ such that $G$ contains an induced $R_k$ is between $\lfloor\sind(G)/2\rfloor$ or $\sind(G)$.
	Consequently, if a graph excludes a threshold graph with $k$ vertices as an induced subgraph, then its strong index is smaller than $2k$.
\end{fact}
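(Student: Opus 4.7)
The plan is to prove the two inequalities $\lfloor\sind(G)/2\rfloor\leq r\leq\sind(G)$, where $r$ denotes the largest $k$ such that $R_k$ is an induced subgraph of $G$, and then deduce the consequence via \cref{fact:univ_thr}. The upper bound $r\leq\sind(G)$ is immediate from the definitions: if $R_k$ sits in $G$ as an induced subgraph, then its canonical labelling $a_1,\dots,a_k,b_1,\dots,b_k$ already satisfies every clause of \cref{def:sindex} — the $a_i$'s form a clique, the $b_i$'s form an independent set, and for $i<j$ we have the edge $a_ib_j$ and the non-edge $a_jb_i$ — so $\sind(G)\geq k$.

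The lower bound is where the actual work lies, and the main (indeed the only) obstacle is that \cref{def:sindex} leaves the adjacency of each ``diagonal'' pair $a_ib_i$ unconstrained, so sindex witnesses need not induce a copy of $R_n$ on the nose. My plan is to sidestep this by interleaving. Setting $n\coloneqq\sind(G)$ and $m\coloneqq\lfloor n/2\rfloor$, I would fix witnesses $a_1,\dots,a_n,b_1,\dots,b_n$ and define $A_j\coloneqq a_{2j-1}$ and $B_j\coloneqq b_{2j}$ for $j\in\{1,\dots,m\}$. The $A_j$'s inherit the clique structure and the $B_j$'s the independent structure, while for arbitrary indices $j,l\in\{1,\dots,m\}$ the potential edge $A_jB_l=a_{2j-1}b_{2l}$ falls entirely under the constrained regime of \cref{def:sindex} — the indices $2j-1$ and $2l$ are always distinct, so the diagonal is avoided — and is present precisely when $2j-1<2l$, i.e.\ when $j\leq l$. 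This is exactly the edge pattern of $R_m$, so $\{A_j,B_j\colon j\leq m\}$ induces a copy of $R_m$ in $G$.

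For the consequence I would argue by contraposition. If $\sind(G)\geq 2k$, then the lower bound just proved produces an induced $R_k$ in $G$; by \cref{fact:univ_thr}, this $R_k$ in turn contains every threshold graph on at most $k$ vertices as an induced subgraph. Hence excluding any single threshold graph on $k$ vertices from $G$ forces $\sind(G)<2k$, as claimed.
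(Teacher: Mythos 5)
Your proof is correct and takes essentially the same approach as the paper: the upper bound is immediate from the definitions, and the lower bound is obtained by the same interleaving of odd-indexed $a$'s with even-indexed $b$'s to dodge the unconstrained diagonal pairs. The derivation of the consequence via \cref{fact:univ_thr} also matches the paper.
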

\begin{proof}
	If $G$ contains an induced $R_k$, then $\sind(G)\geq\sind(R_k)=k$. Conversely, assume $a_1,\dots,a_k,b_1,\dots,b_k$ witness $\sind(G)=k$. Then, $a_1,a_3,\dots,a_{2\lfloor k/2\rfloor-1}, b_2,\dots, b_{2\lfloor k/2\rfloor}$ induce 
	a graph isomorphic to $R_{\lfloor k/2\rfloor}$. The last claim follows immediately from~\cref{fact:univ_thr}.
\end{proof}

With the notion of strong index understood, we state and prove two lemmas that shall provide the main combinatorial leverage in the induction. Roughly speaking, the idea is that a {\gtd} of a graph and its complement can be used to decompose the graph into pieces that are simpler in terms of the strong index.

\begin{lemma}
	\label{lem:cog1}
	Let $G$ be a connected  graph and let $Y$ be a {\gtd} of $G$.
	Let $B$ be a bag of $Y$ with level $i>1$.
	Then $\sind(G[B])<\sind(G)$.
\end{lemma}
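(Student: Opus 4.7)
The plan is to take a witness for $\sind(G[B]) \ge k$ and extend it by two extra vertices — one on each side — drawn from ancestor bags of $B$, yielding a witness for $\sind(G) \ge k+1$. Because the level of $B$ is $i \ge 2$, both the parent bag $B'$ of $B$ and the grandparent bag $B''$ of $B$ exist in $Y$ and are pairwise distinct from $B$ (bags at different levels are different nodes of $Y$, hence disjoint subsets of $V(G)$). Set $u := h(B) \in B'$ and $v := h(B') \in B''$. Clause~(5) of \cref{def:dec}, applied first to $B$ and then to $B'$, yields three facts that drive the argument: $u$ is adjacent to every vertex of $B$; $v$ is adjacent to every vertex of $B'$, in particular to $u$; and $v$ is non-adjacent to every vertex lying in a strict descendant of $B'$, and so in particular non-adjacent to every vertex of $B$.

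Writing $k = \sind(G[B])$ and picking witnesses $a_1, \dots, a_k, b_1, \dots, b_k \in B$ in $G[B]$, I would then \emph{prepend} $u$ to the clique side and $v$ to the independent side, reindexing to form the length-$(k+1)$ sequences $a'_1 := u, a'_2 := a_1, \dots, a'_{k+1} := a_k$ and $b'_1 := v, b'_2 := b_1, \dots, b'_{k+1} := b_k$. Checking the conditions of \cref{def:sindex} for these sequences in $G$ is direct from the three facts above: the set $\{u, a_1, \dots, a_k\}$ is a clique because $u$ is universal on $B$ and the $a_i$'s already form a clique; the set $\{v, b_1, \dots, b_k\}$ is independent because $v$ is non-adjacent to each $b_i \in B$ and the $b_i$'s already formed an independent set; for each $j \ge 2$ the new index-$1$ vertices satisfy $a'_1 = u \sim b_{j-1} = b'_j$ and $b'_1 = v \not\sim a_{j-1} = a'_j$; and the pattern on the remaining indices $2, \dots, k+1$ is unchanged from the original witness. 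Pairwise distinctness of the $2(k+1)$ chosen vertices is automatic since bags of $Y$ are pairwise disjoint and $u \in B'$, $v \in B''$, $a_i, b_i \in B$ lie in three different bags.

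This establishes $\sind(G) \ge k+1 > \sind(G[B])$. The argument is essentially a bookkeeping exercise on the hook property, so there is no real obstacle; the one subtlety worth flagging is why the hypothesis $i > 1$ is used rather than merely $i \ge 1$. If $B$ were at level~$1$, the grandparent bag $B''$ would not exist, so only $u \in B'$ would be available: one could extend the clique side but would lack the vertex $v$ that is simultaneously non-adjacent to everything in $B$, which is precisely the ingredient needed to enlarge the independent side.
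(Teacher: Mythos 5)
Your proof is correct and follows the same approach as the paper: take the hooks $h(B)$ and $h(B')$ (which live in the parent and grandparent bags, both existing because $i>1$), prepend $h(B)$ to the clique side and $h(B')$ to the independent side of a witness for $\sind(G[B])$, and observe that the hook property guarantees all required adjacencies and non-adjacencies. The only cosmetic difference is that you explicitly verify the (irrelevant) adjacency $v\sim u$ and name the grandparent bag, whereas the paper leaves these implicit.
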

\begin{proof}
	By \cref{enum:hook} of
	\cref{def:dec}, the hook $h(B)$ is adjacent to all the vertices of $B$. Let $B'$ be the parent bag of $B$ in $Y$. Note that since $i>1$, $B'$ is not the root bag. By \cref{enum:hook} of
	\cref{def:dec} again, the hook $h(B')$ is non-adjacent to all the vertices of $B$. Let $a_1,\dots,a_k,b_1,\dots,b_k$ witness $\sind(G[B])=k$.
	Then $h(B),a_1,\dots,a_k,h(B'),b_1,\dots,b_k$ witness $\sind(G)>k$.
\end{proof}

\begin{lemma}
	\label{lem:cog2}
	Let $G$ be a connected graph, $Y$ be a {\gtd} of $G$, and
	$B$ be a bag of $Y$ with level $1$.
	Let $C$ be a connected component of $\ov{G[B]}$, $Y_C$ be a {\gtd} of $C$, and $B'$ be a bag of $Y_C$ with level $j\geq 1$. Then $\sind(G[B'])<\sind(G)$.
\end{lemma}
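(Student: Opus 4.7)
The plan is to mirror the argument of \cref{lem:cog1}, enlarging a witness of $\sind(G[B'])$ by two ``prepended'' vertices: one that is adjacent in $G$ to every vertex of $B'$, to extend the clique side, and one that is non-adjacent in $G$ to every vertex of $B'$, to extend the independent side. The novelty compared to \cref{lem:cog1} is that the second vertex must be produced from $Y_C$, which is a {\gtd} of a component of the \emph{complement} $\overline{G[B]}$, so one must carefully flip adjacency polarities when translating back to $G$.

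First I would locate the two prepended vertices. Let $r$ denote the initial vertex of $Y$. Since $B$ has level $1$, its parent in $Y$ is the root bag $\{r\}$, and hence $h(B)=r$; by \cref{enum:hook} of \cref{def:dec}, $r$ is adjacent in $G$ to every vertex of $B$, in particular to every vertex of $B'\subseteq B$. Next, let $h \coloneqq h_{Y_C}(B')$; this is well defined because $B'$ is a non-root bag of $Y_C$ (as $j\geq 1$). Applying \cref{enum:hook} of \cref{def:dec} to the decomposition $Y_C$ of $C$, the vertex $h$ is adjacent in $C$, hence also in $\overline{G[B]}$, to every vertex of $B'$. Translating this back to $G[B]$, the vertex $h$ is \emph{non-adjacent} in $G$ to every vertex of $B'$.

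Now I would extend the witness. Let $a_1,\dots,a_k,b_1,\dots,b_k$ realize $\sind(G[B'])=k$, and set $a_0\coloneqq r$ and $b_0\coloneqq h$. All adjacencies demanded by \cref{def:sindex} hold for the enlarged sequence: the vertices $a_0,\dots,a_k$ form a clique (since $r$ is adjacent to each $a_i\in B'$), the vertices $b_0,\dots,b_k$ form an independent set (since $h$ is non-adjacent to each $b_i\in B'$), and for every $1\leq i\leq k$ we get both the edge $a_0 b_i$ and the non-edge $b_0 a_i$, again because $a_i,b_i\in B'$. Distinctness of the $2k+2$ vertices follows from $a_i,b_i\in B'$ together with $r\notin B\supseteq B'$ and $h\notin B'$ (since $h$ lies in the parent bag of $B'$ in $Y_C$, and bags of a {\gtd} are pairwise disjoint); moreover $r\neq h$ because $r\notin B$ while $h\in C\subseteq B$.

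The step I expect to be most error-prone — though not a deep obstacle — is the bookkeeping of which graph hosts which adjacency: \cref{enum:hook} of \cref{def:dec} refers to the graph being decomposed, which for $Y_C$ is the induced subgraph of $\overline{G[B]}$ on $C$ rather than $G$ itself, so the adjacency polarity has to be reversed exactly once when transferring the defining property of $h$ back to $G$. Once this reversal is performed, the witness above certifies $\sind(G)\geq k+1 > \sind(G[B'])$, as desired.
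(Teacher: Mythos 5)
Your argument is correct, and its core idea --- prepend a $G$-universal vertex from the root bag of $Y$ and a $G$-anti-universal vertex obtained from $Y_C$ to a maximum $\sind$-witness inside $B'$ --- is the same as the paper's. The only difference is that you handle all levels $j\ge 1$ uniformly by taking the hook $h_{Y_C}(B')$, which always supplies the needed anti-universal vertex; the paper restricts the direct two-vertex-prepending argument to $j=1$ (where that hook is exactly the initial vertex of $Y_C$) and disposes of $j>1$ by applying \cref{lem:cog1} inside $C\subseteq\ov{G[B]}$ together with \cref{fact:sindc}. Your uniform treatment is a mild simplification that avoids the detour through complementation and the invariance fact, at no extra cost; the paper's version instead economizes by reusing a lemma it has already proved.
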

\begin{proof}
	For bags $B'$ of $Y_C$ with level $j>1$, the result follows from \cref{lem:cog1,fact:sindc}. So assume $j=1$, that is, $B'$ is a bag of $Y_C$ with level $1$.
	
	Let $r$ be the initial vertex of $Y_C$ and $r'$ be the initial vertex of $Y_C$. Note that in $G$, $r$ is adjacent to all the vertices in $B$ and $B'\subseteq B$, while $r'$ is non-adjacent to all the vertices of~$B'$. Therefore, if $a_1,\dots,a_k,b_1,\dots,b_k$ witness $\sind({G}[B'])=k$, then
	$r,a_1,\dots,a_k,r',b_1,\dots,b_k$ witness
	$\sind(G)> k=\sind(G[B'])$. 
\end{proof}

With all the tools prepared, we can proceed to the main argument. In essence, it boils down to combining \cref{lem:cog1,lem:cog2} in an induction on the strong index.

\begin{proof}[Proof of \cref{thm:induced}]
Fix $t$ for the remainder of the proof; we may assume that $t\geq 4$.
We shall prove that if a graph $G$ excludes $P_t$ and $\ov{P}_t$ as induced subgraphs and has strong index at most $k$, then $G$ has a cosplit of size at most 
\[
N_k\coloneqq \Bigl(3+\frac1{t-3}\Bigr)\cdot(2t-5)^{k-1}-\Bigl(\frac{1}{t-3}\Bigr)\leq 4\cdot(2t-5)^{k-1}.
\]
and height at most~$2k$. By \cref{fact:sind_induced}, this suffices to infer \cref{thm:induced}. We apply induction on $k$.

For the base case $k=1$,
consider any vertex $u\in V(G)$, let $X$ be the set of neighbors of $u$, and let $Y=V(G)\setminus X\setminus \{u\}$ be the set of non-neighbors of $u$. As $G$ has strong index $1$, it follows that the maximum degree in $G[Y]$ is at most $1$, that is, $G[Y]$ is a disjoint union of isolated vertices and isolated edges. Similarly, the maximum degree in $\ov{G}[X]$ is at most $1$, hence $\ov{G}[X]$ is a disjoint union of isolated vertices and isolated edges. It follows that $\{X,Y,\{u\}\}$ is a cosplit of $G$ of size at most $N_1=3$ and height at most $2$.

We proceed to the induction step. Let $Y$ be a {\gtd} of $G$. (Note that $G$ may be disconnected, so $Y$ may be a rooted forest.)
By \cref{lem:cog1}, for every bag $B$ of $Y$ with level $i>1$, we have $\sind(G[B])<\sind(G)$. Hence, by induction there is a cosplit $\Pp^B$ of $G[B]$ of size at most $N_{k-1}$ and height at most $2k-2$. Next, for every bag $B$ of $Y$ with level~$1$ and every connected component $C$ of $\ov{G}[B]$, we fix any {\gtd} $Y_C$ of~$C$. By \cref{lem:cog2}, for every bag $B'$ of $Y_C$ of level $j\geq 1$ we have $\sind(G[B'])<\sind(G)$. Therefore, by induction we again find a cosplit $\Pp^{B'}$ of $G[B']$ of size at most $N_{k-1}$ and height at most~$2k-2$. 

For convenience, let $\Ee$ be the set of all connected components $C$ of all graphs $\ov{G}[B]$, where $B$ ranges over bags of $Y$ with level $1$.
Note that by \cref{fact:height_gtd}, each of the \gtd s~$Y$ and $\{Y_C\colon C\in \Dd\}$ considered above has height at most $t-2$.


Every cosplit $\Pp^X$ defined in the paragraph above has size at most $N_{k-1}$, hence let us arbitrarily enumerate it as
$$\Pp^X = \{P^X_1,\ldots,P^X_{N_{k-1}}\},$$
where by abuse of notation some of the sets $P^X_\ell$ for $1\leq \ell\leq N_{k-1}$ may be empty. We define a partition $\Pp$ of $V(G)$ as follows:
\begin{itemize}
 \item For every $2\leq i\leq t-2$ and $1\leq \ell\leq N_{k-1}$, add to $\Pp$ the set 
 $$P_{i,\ell}\coloneqq \bigcup \left\{P^B_\ell\colon B\textrm{ is a bag of level }i\textrm{ in }Y\right\}.$$
 \item For every $1\leq j\leq t-2$ and $1\leq \ell\leq N_{k-1}$, add to $\Pp$ the set
 $$P_{1,j,\ell}\coloneqq \bigcup \left\{ P^{B'}_\ell\colon C\in \Ee\textrm{ and } B'\textrm{ is a bag with level }j\textrm{ in }Y_C\right\}.$$
 \item Finally, add to $\Pp$ the sets
 \begin{gather*}
   P_0 \coloneqq \bigcup \{B\colon B\textrm{ is a bag of }Y\textrm{ with level }0\},\\
   P_{1,0}\coloneqq \bigcup \{B'\colon C\in \Ee \textrm{ and }B'\textrm{ is a bag of }Y_C\textrm{ with level }0\}.
 \end{gather*}
\end{itemize}
Clearly, we have
$$|\Pp|\leq (t-3)\cdot N_{k-1}+(t-2)\cdot N_{k-1}+2=N_k.$$ 

It remains to verify that every part of $\Pp$ induces a cograph of height at most $2k$. For parts of the form $P_{i,\ell}$, $G[P_{i,\ell}]$ is the disjoint union of cographs $G[P^B_\ell]$ for $B$ ranging as in the definition. For parts of the form $P_{1,j,\ell}$, $G[P_{1,j,\ell}]$ can be obtained from cographs $G[P^{B'}_\ell]$ for $B'$ ranging as in the definition as follows: first, for every bag $B$ of $Y$ with level $1$ construct the join of the graphs $G[P^{B'}_\ell]$ for all $B'$ satisfying $B'\subseteq B$, and then take the disjoint union of the obtained graphs for all $B$ as above. Finally, $G[P_0]$ is edgeless and $G[P_{1,0}]$ is a disjoint union of cliques.
\end{proof}

%


%

\section{Constructing a $2$-cosplit: proof of \cref{thm:semi-induced}}

%

Our first step towards the proof of \cref{thm:semi-induced} is to reduce the problem to the case of bipartite graphs. For this, we adjust the notion of a $2$-cosplit to bipartite graphs as follows: if $G$ is a bipartite graph and $\Pp$ is a $2$-cosplit of $G$, then we require that $\Pp$ refines the bipartition; that is, every part of $\Pp$ is entirely contained in one of the sides. Note that thus, the only (semi-)induced subgraphs considered in the definition that may not be edgeless are the graphs $G[A,B]$ where $A,B$ are parts contained in the opposite sides. Further, when talking about induced subgraphs of bipartite graphs, we also treat them as bipartite graphs with the inherited bipartition.

The statement for bipartite graphs is provided below.

\begin{theorem}\label{thm:bipartite-depth2}
	For every pair of integers $t,k\in \N$ there exists $N\in \N$ such that the following holds: Every bipartite graph that excludes $P_t$, $\ti{P}_t$, and $H_k$ as induced subgraphs admits a depth-$2$ cosplit of size at most $N$ and height at most $4k$.
\end{theorem}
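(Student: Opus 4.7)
The plan is to deduce \cref{thm:bipartite-depth2} from \cref{thm:induced} applied to a carefully chosen auxiliary non-bipartite graph. Given $G=(A\cup B,E)$ bipartite, excluding $P_t$, $\ti P_t$, and $H_k$ as induced subgraphs, I form $G^{\star}$ on vertex set $V(G)$ by adding all edges within $A$, so that $E(G^{\star})=E\cup\binom{A}{2}$: $A$ becomes a clique in $G^{\star}$ while $B$ stays independent and the bipartite edges of $G$ are preserved. The sought $2$-cosplit of $G$ will arise by refining a cosplit of $G^{\star}$ (produced by \cref{thm:induced}) along the bipartition.

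The first task is to check that $G^{\star}$ satisfies the hypotheses of \cref{thm:induced} with parameters bounded in $t$ and $k$. Because $A$ is a clique in $G^{\star}$, no induced path of $G^{\star}$ can contain three consecutive $A$-vertices or a $B$-vertex wedged between two $A$-vertices (each yields a chord through the clique); because $B$ is independent, no two consecutive path vertices can lie in $B$. A short case analysis then caps induced paths in $G^{\star}$ at four vertices, so $G^{\star}$ is $P_5$-free. The symmetric analysis of $\ov{G^{\star}}$ (where $A$ is independent, $B$ is a clique, and cross-edges follow $\ti E$) together with $\ti P_t$-freeness of $G$ gives $\ov{P}_5$-freeness of $G^{\star}$. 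Finally, in any induced copy of $R_{k+1}$ inside $G^{\star}$ the clique side sits almost entirely in $A$ and the independent side almost entirely in $B$ (at most one stray vertex per side, forced by the clique/independent structure of $G^{\star}$); the bipartite slice of this copy embeds an induced $H_k$ into $G$, contradicting the hypothesis. Equivalently, $\sind(G^{\star})\le 2k$.

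Applying \cref{thm:induced} to $G^{\star}$ now yields a cosplit $\Pp^{\star}$ of bounded size $N(t,k)$ and cograph-height at most $4k$ (using the sharper bound $2\sind(G^{\star})\le 4k$). Refine it to $\Pp=\{X\cap A:X\in\Pp^{\star}\}\cup\{X\cap B:X\in\Pp^{\star}\}$ (discarding empty parts); this at most doubles the size and refines the bipartition. Every part lies on one side of the bipartition and therefore induces an edgeless subgraph of $G$. For pairs of parts $(X\cap A,\,X\cap B)$ arising from the \emph{same} $X\in\Pp^{\star}$, the cograph $G^{\star}[X]$ has $G[X\cap A,\,X\cap B]$ as its bipartite semi-induced subgraph, and \cref{fc:co-bi-co} upgrades this to a bi-cograph of height at most $4k$.

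The main and remaining obstacle is to verify the bi-cograph property on the \emph{off-diagonal} pairs $G[X_1\cap A,\,X_2\cap B]$ with $X_1\ne X_2$, since a cosplit of $G^{\star}$ does not a priori control unions of distinct parts. I expect to resolve this by opening up the inductive construction behind \cref{thm:induced}, whose parts are indexed by levels of Gyárfás decompositions of $G^{\star}$ and of $\ov{G^{\star}}$ on level-$1$ bags; the hook vertices and the DFS property of Gyárfás should allow, for every pair of parts, the explicit construction of a cograph on their union whose bipartite semi-induced subgraph between the $A$- and $B$-slices is exactly the sought pair-graph, with height still at most $4k$, whereupon \cref{fc:co-bi-co} concludes. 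A fallback plan, should this direct tracking not deliver the sharp height, is to set up an induction on a bipartite analogue $\sind_{bip}(G)$ of strong index — mirroring \cref{def:sindex} with the clique requirement on the $a_i$'s replaced by membership in $A$ and similarly for the $b_i$'s on $B$, bounded by $2k$ via $H_k$-freeness — recursing through Gyárfás decompositions of $G$ and $\ti G$ in the spirit of \cref{lem:cog1,lem:cog2} and producing the $2$-cosplit in one stroke with the required height bound.
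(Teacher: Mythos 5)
Your reduction to \cref{thm:induced} via the auxiliary graph $G^\star$ has a fatal gap at exactly the place you flag, and it cannot be closed along the lines you sketch. Observe first that your preliminary verifications do not actually use the hypotheses that $G$ excludes $P_t$ and $\ti P_t$: adding a clique on $A$ automatically makes $G^\star$ both $P_5$-free and $\ov{P}_5$-free (your own case analysis for $P_5$-freeness nowhere invokes $P_t$-freeness of $G$, and the same structural argument applied to $\ov{G^\star}$, where $B$ is the clique side, gives $\ov{P}_5$-freeness without $\ti P_t$-freeness), and $H_k$-freeness alone bounds $\sind(G^\star)$. Consequently, if the reduction could be completed as described, it would apply verbatim to $G=P_n$ (bipartite, with $A$ the odd-indexed and $B$ the even-indexed vertices), which is $H_3$-free but has an induced $P_n$. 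For this $G$ the recipe cannot produce a valid output: already $\{A,B\}$ is a cosplit of $G^\star$ of size $2$ and height $1$, since $G^\star[A]$ is a clique and $G^\star[B]$ is edgeless, and its sole off-diagonal pair is $G[A,B]=P_n$ itself; running the explicit construction inside the proof of \cref{thm:induced} on $G^\star$ likewise returns parts that, after intersecting with the two sides, still contain $A$ minus finitely many vertices and $B$ minus finitely many vertices, so the off-diagonal semi-induced graph is again essentially $P_n$, which is not a bi-cograph of bounded height. The reason is structural: the {\gtd}s of $G^\star$ and of $\ov{G^\star}$ are organized around the artificial clique on $A$ and never see the bipartite adjacency of $G$, so the resulting cosplit has no reason to respect pairs of parts on opposite sides. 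Since the hypotheses on $P_t$ and $\ti P_t$ are essential to the theorem yet absent from the argument you actually run, this is not a matter of more effort — the route is blocked.

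Your one-sentence fallback — inducting on a bipartite analogue of the strong index via {\gtd}s of $G$ and of $\ti G$, in the spirit of \cref{lem:cog1,lem:cog2} — is in fact exactly the route the paper takes: the invariant is the \emph{bipartite index} (\cref{def:bindex}), and the analogues of \cref{lem:cog1,lem:cog2} are \cref{lem:bag_ind,lem:bag_indc}. However, the bipartite case is substantively more delicate than \cref{thm:induced}: since {\gtd} bags of a bipartite graph alternate sides by level (\cref{lem:bip-gtd}), one must assemble $2$-cosplits across levels of different parities, across subtrees, and across the graph and its bipartite complement, which the paper does via a nested chain of claims and the merging lemmas \cref{lem:merge,lem:partition,lem:compl}. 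None of this is carried out in the proposal, so the fallback remains a plan rather than a proof.
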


We remark that our proof of the above gives $N=3^{t^{2k-2}}$.
Before we give a proof of \cref{thm:bipartite-depth2}, let us see how \cref{thm:semi-induced} can be derived by combining it with \cref{thm:induced}.

\begin{proof}[Proof of \cref{thm:semi-induced} assuming \cref{thm:bipartite-depth2}]
 Since $G$ excludes $P_t$, $\ti{P}_t$, and $H_k$ as semi-induced subgraphs, it follows that $G$ excludes $P_t$, $\ov{P}_t$, and $R_k$ as induced subgraphs. So by \cref{thm:induced} there is a cosplit $\Pp_0$ of $G$ of size at most $N_0$ and depth at most $4k$, where $N_0$ depends only on~$t$ and~$k$. Next, for every pair of distinct parts $X,Y\in \Pp_0$ consider the semi-induced subgraph $G[X,Y]$. This is a bipartite graph that excludes $P_t$, $\ti{P}_t$, and $H_k$ as induced subgraphs, hence by \cref{thm:bipartite-depth2}, $G[X,Y]$ admits a $2$-cosplit $\Pp_{X,Y}$ of size at most $N_1$ and height at most $4k$, where $N_1$ depends only on $t$ and $k$. Let $\Pp$ be the coarsest partition of the vertex set of $G$ that refines all the partitions $\Pp_0$ and $\Pp_{X,Y}$ for distinct $X,Y\in \Pp_0$ in the following sense: for every $A\in\Pp$ and $B$ belonging to any of the partitions above, we have $A\cap B=\emptyset$ or $A\subseteq B$. Then $|\Pp|\leq N_0\cdot N_1^{N_0-1}\eqqcolon N$ and it is easy to argue, using \cref{fc:co-bi-co}, that $\Pp$ is a $2$-cosplit of $G$ of height at most~$4k$.
\end{proof}

%

Therefore, we are left with proving \cref{thm:bipartite-depth2}. As we deal with bipartite graphs, it will be convenient to introduce a variant of the strong index invariant that is suited for this setting.

\begin{definition}\label{def:bindex}
	The \emph{bipartite index} of a bipartite graph $G$, denoted $\bind(G)$, is the
	maximum integer $k$ such that $G$ contains vertices
	$a_1,\dots,a_k,b_1,\dots,b_k$ satisfying the following: all the $a_i$'s belong to one side of $G$, all the $b_j$'s belong to the other side of $G$, and 
	for all $1\leq i<j\leq k$
	the vertex $a_i$ is adjacent to the vertex $b_j$ and the vertex
	$b_i$ is not adjacent to the vertex $a_j$.
\end{definition}

We remark that the same concept was used under the name {\em{quasi-index}} in~\cite{StableTWW}. We prefer to use the term {\em{bipartite index}} here, as it is more descriptive. Note the following analogue of \cref{fact:sindc}.

\begin{fact}
	For every bipartite graph $G$ we have $\bind(G)=\bind(\ti{G})$.
\end{fact}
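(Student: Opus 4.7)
The plan is to show that any witness for $\bind(G) \ge k$ can be converted into a witness for $\bind(\ti{G}) \ge k$ by simply reversing the indexing of the sequences; applying the same trick the other way (using $\ti{\ti{G}} = G$) then yields the reverse inequality, which gives equality.

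Concretely, suppose $a_1,\ldots,a_k$ and $b_1,\ldots,b_k$ witness $\bind(G)\ge k$, so all $a_i$ lie on one side of the bipartition, all $b_j$ on the other, and for every $i<j$ we have $a_ib_j\in E(G)$ and $a_jb_i\notin E(G)$. I will check that the reversed sequences $a'_i:=a_{k+1-i}$ and $b'_i:=b_{k+1-i}$ witness $\bind(\ti{G})\ge k$. The bipartition of $\ti{G}$ agrees with that of $G$, so the side conditions are preserved. For $i<j$, set $i'=k+1-i$ and $j'=k+1-j$; then $j'<i'$, so the defining property of $\bind(G)$ applied to the pair $(j',i')$ yields $a_{i'}b_{j'}\notin E(G)$ and $a_{j'}b_{i'}\in E(G)$. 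Translating into the bipartite complement gives $a'_ib'_j=a_{i'}b_{j'}\in E(\ti{G})$ and $a'_jb'_i=a_{j'}b_{i'}\notin E(\ti{G})$, which is exactly the required pattern.

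This establishes $\bind(\ti{G})\ge\bind(G)$. Because the map $G\mapsto\ti{G}$ is an involution on bipartite graphs with a fixed bipartition, the same argument applied to $\ti{G}$ gives $\bind(G)=\bind(\ti{\ti{G}})\ge\bind(\ti{G})$, and the two inequalities combine to the claimed equality. There is no real obstacle here; the only subtle point is to verify that reversing the indices swaps the roles of the edge and non-edge conditions in precisely the right way, which the index substitution $i'=k+1-i$ accomplishes.
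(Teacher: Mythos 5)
Your proof is correct. The index reversal is carefully checked: for $i<j$ you set $i'=k+1-i$, $j'=k+1-j$, observe $j'<i'$, read off $a_{j'}b_{i'}\in E(G)$ and $a_{i'}b_{j'}\notin E(G)$ from the definition, and translate both through the bipartite complement to get exactly the required pattern for $a'_1,\ldots,a'_k,b'_1,\ldots,b'_k$. The involution argument $\ti{\ti{G}}=G$ then gives the reverse inequality. No gap.

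The paper itself states this fact without proof. For the analogous \cref{fact:sindc} (invariance of the strong index under ordinary complementation) the paper uses a slightly different device: rather than reversing the order, it swaps the roles of the two sequences, taking $a'_i\coloneqq b_i$ and $b'_i\coloneqq a_i$. You can check that this swap also works for $\bind$ and $\ti{G}$: the condition ``$a_i$ adjacent to $b_j$'' becomes ``$b_i$ adjacent to $a_j$ in $\ti{G}$,'' i.e.\ ``$a_j$ not adjacent to $b_i$ in $G$,'' which is the original second condition, and symmetrically for the other half. Both devices trade the two halves of the asymmetric $i<j$ condition against each other; the swap avoids the index arithmetic, while your reversal avoids having to notice that the sides of the bipartition can be relabelled. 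They are of comparable length, and either would be an acceptable proof for a fact the authors chose to leave implicit.
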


Next, let us characterize connected bipartite graphs of bipartite index $1$.

\begin{fact}
	\label{fact:index1}
	If $G$ is bipartite, connected, and $\bind(G)=1$, then $G$ is a complete bipartite graph.	
\end{fact}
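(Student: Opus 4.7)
The plan is to argue by contradiction. The key observation is that a single missing edge in a connected bipartite graph, together with any shortest path between its endpoints, produces four vertices that directly witness $\bind(G) \geq 2$ according to \cref{def:bindex}.

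First, I would let $A, B$ be the sides of $G$ and assume for contradiction that there exist $a \in A$ and $b \in B$ with $ab \notin E(G)$. Using connectedness, I would pick a shortest $a$-$b$ path $a = v_0, v_1, \dots, v_\ell = b$ in $G$. Since $G$ is bipartite with $a, b$ on opposite sides, $\ell$ must be odd, and because $ab$ is a non-edge we have $\ell \geq 3$. In particular, $v_3$ exists, with $v_0, v_2 \in A$ and $v_1, v_3 \in B$, and $v_0 v_3 \notin E(G)$: when $\ell = 3$ this is exactly the hypothesis $ab \notin E(G)$, and when $\ell > 3$ it follows from the shortest-path property (otherwise one could replace the initial segment $v_0, v_1, v_2, v_3$ by the single edge $v_0 v_3$ and shorten the path).

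Next, I would set $a_1 \coloneqq v_2$, $a_2 \coloneqq v_0$, $b_1 \coloneqq v_3$, $b_2 \coloneqq v_1$. Then $a_1, a_2 \in A$ and $b_1, b_2 \in B$, while $a_1 b_2 = v_2 v_1$ is an edge of the path and $a_2 b_1 = v_0 v_3$ is a non-edge. This is precisely the configuration required by \cref{def:bindex} for indices $i = 1 < j = 2$, so $\bind(G) \geq 2$, contradicting the hypothesis $\bind(G) = 1$. Hence every pair $(a, b) \in A \times B$ is an edge, i.e.\ $G$ is complete bipartite.

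The argument is short and essentially immediate; there is no real main obstacle. The only subtlety worth flagging is that \cref{def:bindex} is asymmetric between the $a$-side and the $b$-side, and within each side it distinguishes indices $i < j$; so one must take care to orient the labels $a_1, a_2, b_1, b_2$ so that the edge of the induced $P_4$ lies in the ``$a_i b_j$ with $i < j$'' slot and the non-edge in the ``$a_j b_i$'' slot, rather than the other way around.
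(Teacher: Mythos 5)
Your proof is correct but takes a genuinely different route than the paper's. The paper argues globally: starting from the non-edge $uv$ (with $u\in L$, $v\in R$), it observes that any edge $u'v'$ with $u'\in L\setminus\{u\}$ and $v'\in R\setminus\{v\}$ would, together with the non-edge $uv$, directly witness $\bind(G)\geq 2$ (take $a_1=u'$, $a_2=u$, $b_1=v$, $b_2=v'$). Hence no such edge exists, so $V(G)$ splits into the two non-empty parts $(L\setminus\{u\})\cup\{v\}$ and $(R\setminus\{v\})\cup\{u\}$ with no edge between them, contradicting connectivity. Your approach is local: you walk a shortest path between the endpoints of a non-edge, argue by parity and the shortest-path property that the first four vertices give an edge $v_1v_2$ and a disjoint non-edge $v_0v_3$ on the correct sides, and read off the witness for $\bind(G)\geq 2$. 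Both proofs rest on the same core observation --- an edge and a vertex-disjoint non-edge split across the sides force $\bind\geq 2$ --- but the paper produces the offending edge ``for free'' from connectivity after removing $u,v$, whereas you produce it constructively from a shortest path. The paper's version avoids the parity and minimality bookkeeping; yours has the small bonus of exhibiting an explicit induced $P_4$, which is the natural obstruction underlying the statement.
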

\begin{proof}
    Let $L,R$ be the sides of $G$. By contradiction, suppose there exist $u\in L$ and $v\in R$ that are not adjacent. Observe that every vertex $u'\in L\setminus \{u\}$ must be non-adjacent to every vertex $v'\in R\setminus \{v\}$, for otherwise $u',u,v,v'$ would witness $\bind(G)\geq 2$. So there is no edge with one endpoint in $L\setminus\{u\}\cup\{v\}$ and second in $R\setminus\{v\}\cup\{u\}$; this contradicts the assumption that~$G$ is connected.
\end{proof}

Also, we have the following counterpart of \cref{fact:sind_induced}.

\begin{fact}
	\label{lem:Hk-ind}
	If a bipartite graph has bipartite index at least $2k+1$, then it contains
	an induced~$H_k$.
\end{fact}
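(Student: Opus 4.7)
The plan is to massage the bipartite-index witness by pigeonhole on the diagonal pairs. Let $a_1,\dots,a_{2k+1},b_1,\dots,b_{2k+1}$ witness $\bind(G)\geq 2k+1$ in the sense of \cref{def:bindex}. For every $1\leq i<j\leq 2k+1$ we already know that $a_ib_j$ is an edge and $a_jb_i$ is not; the only relations missing for an induced $H_k$ are the ``diagonal'' adjacencies $a_ib_i$, which are unrestricted by \cref{def:bindex}. Since $H_k$ insists that $a_rb_r$ is an edge for every $r$ (because $r\leq r$), the task reduces to selecting $k$ indices on which the diagonal behaves correctly.

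Split the $2k+1$ indices according to whether $a_ib_i$ is an edge. Either at least $k$ of them are edges or at least $k+1$ of them are non-edges; this is the only place where the bound $2k+1$ is used. In the first case, take indices $i_1<\dots<i_k$ with $a_{i_r}b_{i_r}$ an edge and set $A_r\coloneqq a_{i_r}$, $B_r\coloneqq b_{i_r}$; the adjacencies required by $H_k$ on the off-diagonal are inherited directly from \cref{def:bindex}.

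In the second case, take indices $i_1<\dots<i_{k+1}$ with every $a_{i_r}b_{i_r}$ a non-edge and apply a shift: set $A_r\coloneqq a_{i_r}$ and $B_r\coloneqq b_{i_{r+1}}$ for $1\leq r\leq k$. For $r<s$ we have $i_r<i_{s+1}$, so $A_rB_s$ is an edge. For $r>s+1$ we have $i_r>i_{s+1}$, so $A_rB_s$ is a non-edge. The new ``diagonal'' case $r=s$ becomes $a_{i_r}b_{i_{r+1}}$ with $i_r<i_{r+1}$, which is an edge; and the unique remaining case $r=s+1$ becomes $a_{i_r}b_{i_r}$, which is a non-edge by our selection. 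Hence $\{A_r,B_r\}_{r=1}^{k}$ induces~$H_k$.

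The only ``obstacle'' is recognising that the diagonal entries are precisely the freedom \cref{def:bindex} leaves open, and that the shift trick kills a non-edge diagonal by moving each $b$ one step to the right; no recursion or Ramsey-type argument is needed, and the quantitative bound $\lceil(\bind(G)-1)/2\rceil$ for the largest induced $H_k$ is optimal up to the factor $2$.
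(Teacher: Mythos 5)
Your proof is correct and takes essentially the same route as the paper: partition the indices by whether the diagonal pair $a_ib_i$ is an edge, apply pigeonhole, and recover $H_k$ in either case. The paper phrases the conclusion more compactly by observing that the $k+1$ monochromatic indices directly yield $H_{k+1}$ or $\ti{H}_{k+1}$, both of which contain $H_k$; your explicit shift argument is precisely the computation hidden in that last observation (and, as a side note, your asymmetric pigeonhole threshold in fact shows $\bind(G)\geq 2k$ already suffices, and the claimed extremal bound at the end should use $\lfloor\cdot\rfloor$ rather than $\lceil\cdot\rceil$).
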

\begin{proof}
	Suppose $a_1,\dots,a_{2k+1},b_1,\dots,b_{2k+1}$ witness
	$\bind(G)\geq 2k+1$. 
	Color every integer $i\in\{1,\ldots,2k+1\}$ black or white depending on whether $a_i$ is adjacent to $b_i$ or not.
	Then there exists a monochromatic subset of
	indices of size $k+1$. The vertices with those indices induce either
	$H_{k+1}$ or $\ti{H}_{k+1}$ in $G$. We conclude by observing that both $H_{k+1}$ and $\ti{H}_{k+1}$ contain
	$H_k$ as an induced subgraph.
\end{proof}
%
%
%
%
%
%

Finally, the following three simple lemmas will be useful for merging $2$-cosplits.

\begin{lemma}\label{lem:merge}
	Let $G$ be a graph. If every connected component of $G$ has a $2$-cosplit of size at most $N$ and height at most $h$, then $G$ has a $2$-cosplit of size at most $N$ and height at most $h+1$.
\end{lemma}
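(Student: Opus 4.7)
The plan is to align the indexings of the given $2$-cosplits of the components and then take index-wise unions. Concretely, let $C_1,\dots,C_m$ be the connected components of $G$, and for each $i$ let $\Pp_i$ be the provided $2$-cosplit of $G[C_i]$ of size at most $N$ and height at most $h$. By padding with empty sets if necessary, I would assume each $\Pp_i$ has exactly $N$ parts, enumerated as $\Pp_i=\{A_{i,1},\dots,A_{i,N}\}$, and then define
\[
A_\ell \;\coloneqq\; \bigcup_{i=1}^{m} A_{i,\ell} \qquad\text{for } \ell=1,\dots,N,
\]
and $\Pp\coloneqq\{A_1,\dots,A_N\}\setminus\{\emptyset\}$. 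Clearly $\Pp$ is a partition of $V(G)$ of size at most $N$.

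The key observation is that because $C_1,\dots,C_m$ are the connected components of $G$, there are no edges in $G$ between $A_{i,\ell}$ and $A_{j,\ell'}$ whenever $i\neq j$. Hence, for every $\ell$, the induced subgraph $G[A_\ell]$ is exactly the disjoint union of the cographs $G[A_{i,\ell}]$, and for every pair of distinct indices $\ell\neq\ell'$, the semi-induced bipartite graph $G[A_\ell,A_{\ell'}]$ is exactly the bipartite disjoint union of the bi-cographs $G[A_{i,\ell},A_{i,\ell'}]$ (any vertex of $A_{i,\ell}$ for which $A_{i,\ell'}$ is empty simply appears as an isolated vertex in this bipartite union, which is still a bi-cograph of height $0$).

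To conclude, I would invoke the description of cotrees and bi-cotrees given in \cref{sec:prelims}: a new root with $f(x,1,1)=0$ merges cotrees of heights at most $h$ into one of height at most $h+1$, and a new root with $f(x,1,2)=f(x,2,1)=0$ merges bi-cotrees of heights at most $h$ into one of height at most $h+1$. Applying these constructions yields a cotree for each $G[A_\ell]$ and a bi-cotree for each $G[A_\ell,A_{\ell'}]$ of height at most $h+1$, so $\Pp$ is a $2$-cosplit of $G$ of size at most $N$ and height at most $h+1$, as required.

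There is no real obstacle here: the statement is a routine ``root-joining'' argument. The only points that warrant a careful line are (i) verifying that the absence of inter-component edges forces the two kinds of gluing to be \emph{disjoint unions} (both in the induced and in the semi-induced sense), and (ii) checking that the height of the resulting (bi-)cotree is genuinely bounded by $h+1$ (and by $h$ in the trivial case of a single nonempty component, which still fits the claimed bound).
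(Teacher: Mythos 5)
Your proof is correct and matches the paper's argument exactly: enumerate each component's $2$-cosplit with padding to size $N$, take index-wise unions, and note that disjoint union of (bi-)cographs of height $h$ is a (bi-)cograph of height $h+1$. The paper states this as "straightforward to verify," whereas you spell out the disjoint-union/root-joining details, but it is the same construction.
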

\begin{proof}
 For a connected component $C$ of $G$, let $\Pp^C$ be a $2$-cosplit of $C$ of size at most $N$ and height at most $h$. Arbitrarily enumerate $\Pp^C$ as $\{P^C_1,\ldots,P^C_N\}$, where by abuse of notation some of the parts may be empty. Then, for $j\in \{1,\ldots,N\}$, define
 $$P_j\coloneqq \bigcup \{P^C_j\colon C\textrm{ is a connected component of }G\}.$$
 It is straightforward to verify that $\{P_1,\ldots,P_N\}$ is a $2$-cosplit of $G$ of height at most $h+1$.
\end{proof}

\begin{lemma}\label{lem:partition}
 Let $G$ be a bipartite graph and let $\Rr$ be a partition of $V(G)$ such that every part of $\Rr$ is entirely contained in one side of $G$. Suppose for every pair of parts $A,B\in \Rr$ belonging to opposite sides of $G$, the graph $G[A,B]$ admits a $2$-cosplit of size at most $N$ and height at most $h$.
 Then $G$ admits a $2$-cosplit of size at most $|\Rr|\cdot N^{|\Rr|}$ and height at most $h$.
\end{lemma}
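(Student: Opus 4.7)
The plan is to obtain $\Pp$ by refining each part of $\Rr$ against the traces of the given $2$-cosplits, and then verify that the resulting partition satisfies the requirements.

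For each pair $(A,B)$ of parts of $\Rr$ lying on opposite sides of $G$, fix a $2$-cosplit $\Pp^{A,B}$ of $G[A,B]$ of size at most $N$ and height at most $h$, and let $\Pp^{A,B}_A$ denote the sub-family of $\Pp^{A,B}$ consisting of those parts that are contained in $A$. Since $\Pp^{A,B}$ refines the bipartition of $G[A,B]$, $\Pp^{A,B}_A$ is a partition of $A$ of size at most $N$. For each $A\in\Rr$, let $\Qq^A$ be the common refinement within $A$ of all the partitions $\Pp^{A,B}_A$, taken over $B\in\Rr$ on the side opposite to $A$; since we intersect at most $|\Rr|$ partitions each of size at most $N$, we get $|\Qq^A|\leq N^{|\Rr|}$. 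Put $\Pp := \bigcup_{A\in\Rr}\Qq^A$. Then $\Pp$ refines $\Rr$ and hence the bipartition of $G$, and $|\Pp|\leq |\Rr|\cdot N^{|\Rr|}$.

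It remains to verify that $\Pp$ is a $2$-cosplit of $G$ of height at most $h$. If $X\in\Pp$, then $X$ lies in one side of $G$, so $G[X]$ is edgeless and therefore a cograph of trivial height. If $X,Y\in\Pp$ are distinct and lie on the same side of $G$, then $G[X,Y]$ is edgeless and hence a bi-cograph of trivial height. Otherwise $X\subseteq A$ and $Y\subseteq B$ for some $A,B\in\Rr$ on opposite sides; by construction, $X$ is contained in a unique part $X'\in\Pp^{A,B}$ on the $A$-side, and likewise $Y\subseteq Y'$ for some part $Y'\in\Pp^{A,B}$ on the $B$-side. Then $G[X,Y]$ is the sub-bipartite graph of $G[X',Y']$ induced by $X\cup Y$, and $G[X',Y']$ is a bi-cograph of height at most $h$ because $\Pp^{A,B}$ has height at most $h$.

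The only real content beyond bookkeeping is the fact that an induced sub-bipartite graph of a bi-cograph of height at most $h$ is again a bi-cograph of height at most $h$, and this is immediate from the tree-model definition: given a bi-cotree $(T,c,f)$ of $G[X',Y']$ of height at most $h$, restrict $T$ to the leaves in $X\cup Y$ and suppress internal nodes that end up with fewer than two surviving children. The least common ancestor of any two surviving leaves is preserved by this operation (since it has at least two surviving children by definition), so the adjacencies of $G[X,Y]$ are correctly recovered by the restricted bi-cotree, whose height is at most that of $T$.
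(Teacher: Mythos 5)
Your proof is correct and follows essentially the same approach as the paper's: take the common refinement of $\Rr$ with the traces of the given $2$-cosplits $\Pp^{A,B}$, bound its size by $|\Rr|\cdot N^{|\Rr|}$, and observe that each semi-induced subgraph between refined parts is an induced sub-bipartite graph of some $G[X',Y']$ with $X',Y'\in\Pp^{A,B}$, hence a bi-cograph of height at most $h$. The only difference is that you spell out the closure of bounded-height bi-cographs under taking semi-induced subgraphs, a detail the paper labels ``straightforward to verify.''
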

\begin{proof}
 For each pair of parts $A,B\in \Rr$ belonging to opposite sides of $G$, let $\Pp_{A,B}$ be the assumed $2$-cosplit of $G[A,B]$ of size at most $N$ and height at most $h$. Let $\Pp$ be the coarsest partition of $V(G)$ that refines $\Rr$ and all the $2$-cosplits $\Pp_{A,B}$ in the following sense: for all pairs $A,B\in \Rr$ as above and all parts $D\in \Pp_{A,B}$, every part of $\Pp$ is either contained in or disjoint with $D$. Then $|\Pp|\leq |\Rr|\cdot N^{|\Rr|}$, because every part of $\Rr$ contains at most $N^{|\Rr|}$ different parts of $\Pp$, and it is straightforward to verify that $\Pp$ is a $2$-cosplit of $G$ of height at most $h$.
\end{proof}

\begin{lemma}\label{lem:compl}
 Every $2$-cosplit of a bipartite graph $G$ is also a $2$-cosplit of $\ti{G}$ of the same height.
\end{lemma}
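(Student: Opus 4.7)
The plan is to unpack the definition of a $2$-cosplit directly. First, note that $\ti{G}$ has the same vertex set and the same bipartition as $G$, so the partition $\Pp$ still refines the bipartition of $\ti{G}$. Second, for any pair of distinct parts $A,B\in\Pp$ on the same side of the bipartition, the graph $\ti{G}[A,B]$ is edgeless (just as $G[A,B]$), and hence a trivial bi-cograph of height $0$. Thus all the work concentrates on pairs $A,B\in\Pp$ on opposite sides of the bipartition, where $\ti{G}[A,B]$ is precisely the bipartite complement of $G[A,B]$. So the lemma reduces to the following key claim: if $H$ is a bi-cograph of height at most $h$, then $\ti{H}$ is also a bi-cograph of height at most $h$.

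To establish this key claim, I would manipulate bi-cotrees directly. Suppose $(T,c,f)$ is a $2$-colored height-$h$ tree model of $H$, where $c$ encodes the bipartition and $f(x,1,1)=f(x,2,2)=0$ for every internal node $x$. Define a new function $f'$ on internal nodes of $T$ by keeping $f'(x,1,1)=f'(x,2,2)=0$ and flipping the cross-color values: $f'(x,1,2)=f'(x,2,1)=1-f(x,1,2)$. The triple $(T,c,f')$ uses the same underlying tree and the same coloring as $(T,c,f)$, so it is a valid $2$-colored height-$h$ tree model of the bipartite graph it defines, call it $H'$. For any two leaves $u,v$ with $c(u)\neq c(v)$, the pair $\{u,v\}$ is an edge of $H'$ if and only if it is not an edge of $H$, so $H'=\ti{H}$, which gives the claim.

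Combining the pieces, every bi-cograph $G[A,B]$ of height at most $h$ witnessed in the $2$-cosplit $\Pp$ of $G$ yields a bi-cograph $\ti{G}[A,B]$ of height at most $h$ when viewed in $\ti{G}$, so $\Pp$ is also a $2$-cosplit of $\ti{G}$ whose height is at most the height of $\Pp$ viewed as a $2$-cosplit of $G$. The only real obstacle is to notice that bipartite complementation of a bi-cograph corresponds to the one-line syntactic flip of the function $f$ at every internal node of its bi-cotree; once this is spotted, the rest of the proof is immediate.
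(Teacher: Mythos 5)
Your proof is correct and follows essentially the same route as the paper, which simply cites the fact that bi-cographs are closed under bipartite complementation and that this operation preserves height; you additionally spell out the one-line bi-cotree manipulation (flipping the cross-color values of $f$ at every internal node) that justifies that fact.
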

\begin{proof}
 Follows from the fact that bi-cographs are closed under bipartite complementation. Note that this operation preserves the height.
\end{proof}

Next, we prove the analogues of \cref{lem:cog1,lem:cog2}. In both cases the reasoning follows the same path, but one needs to be careful about the bipartiteness.
%

\begin{lemma}
	\label{lem:bag_ind}
	Let $G$ be a connected bipartite graph and let $Y$ be a {\gtd} of $G$.
	Let $B$ be a bag of $Y$ with level $i>1$ and let $G_B$ be the
	subgraph of $G$ induced by $B$ and the union of all the descendants of $B$ at levels $i'\not\equiv i\bmod 2$.
	Then $\bind(G_B)<\bind(G)$.
\end{lemma}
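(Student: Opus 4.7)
The plan is to mimic the proof of \cref{lem:cog1}, but to carefully track the bipartition and adjust the "boost by two extra vertices" construction so that it respects the sides. Let $B'$ denote the parent of $B$ in $Y$ and $B''$ the grandparent; both exist because the level of $B$ is $i>1$. Let $h=h(B)\in B'$ and $h'=h(B')\in B''$ be the hooks. By \cref{lem:bip-gtd}, the side of the bipartition that a bag belongs to depends only on the parity of its level, so: $B$ and $h'$ lie on one side $L$; while $h$ together with the descendants of $B$ at levels of parity opposite to $i$ (i.e.\ exactly the ``other side'' of $G_B$) lie on the opposite side $R$. In particular $G_B$ is naturally bipartite with $B\subseteq L$ on one side and descendants at levels of opposite parity on the other.

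Next I will record three adjacency facts straight from \cref{enum:hook} of \cref{def:dec}: (i) $h$ is adjacent to every vertex of $B$, hence to every vertex of $G_B\cap L$; (ii) $h$ is non-adjacent to every vertex in strict descendants of $B$, hence to every vertex of $G_B\cap R$; (iii) $h'$ is non-adjacent to every vertex in strict descendants of $B'$, hence to every vertex of $G_B$. Note also that $h,h'\notin V(G_B)$ because bags are pairwise disjoint, and $h\neq h'$ for the same reason.

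Now let $a_1,\dots,a_k,b_1,\dots,b_k$ witness $\bind(G_B)=k$. By possibly swapping the enumerations via $a'_i\coloneqq b_{k+1-i}$, $b'_i\coloneqq a_{k+1-i}$ (which preserves the witnessing conditions and interchanges the roles of the two sides), I may assume the $a_i$'s lie in $L$ (so $a_i\in B$) and the $b_j$'s lie in $R$ (so the $b_j$'s are in strict descendants of $B$). I then set
\[
a_{k+1}\coloneqq h' \in L \qquad\text{and}\qquad b_{k+1}\coloneqq h\in R,
\]
and claim that the extended sequences witness $\bind(G)\geq k+1$. For $1\leq i<j\leq k+1$ with $j\leq k$ the required conditions hold because the original sequences witness $\bind(G_B)=k$. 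For $j=k+1$ and $i\leq k$: the edge $a_ib_{k+1}=a_ih$ is present by (i) since $a_i\in B$, while the non-edge $a_{k+1}b_i=h'b_i$ is absent by (iii) since $b_i$ lies in a strict descendant of $B'$.

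The proof is essentially routine once the bipartite bookkeeping is set up, so I do not expect a real obstacle; the only point demanding care is the WLOG step. One must verify that swapping the enumerations really swaps the roles of the two sides while preserving the asymmetric half-graph-like pattern in \cref{def:bindex}, which is where the calculation is least mechanical.
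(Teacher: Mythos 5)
Your proof is correct and is essentially the paper's argument in mirrored form: the paper uses the WLOG to place the $a_\ell$'s in the descendant set $D$ and the $b_\ell$'s in $B$, then \emph{prepends} $h(B)$ and $h(B')$ to the respective sequences, whereas you place the $a_i$'s in $B$ and the $b_j$'s in $D$ and \emph{append} $h'$ and $h$; both rely on the same hook adjacencies from the two \gtd{} levels above $B$, the same parity bookkeeping via \cref{lem:bip-gtd}, and the same reversal trick $a'_\ell\coloneqq b_{p+1-\ell}$, $b'_\ell\coloneqq a_{p+1-\ell}$ to choose which side hosts the $a$'s.
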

\begin{proof}
    Let $D$ be the union of all the descendants of $B$ at levels $i'\not\equiv i\bmod 2$.
    By \cref{lem:bip-gtd}, each of $B$ and $D$ is entirely contained in a single side of $G$, and these are different sides.
    
	By \cref{enum:hook} of
	\cref{def:dec}, the hook $h(B)$ is adjacent to all the vertices of
	$B$. Let $B'$ be the parent of $B$ in $Y$. Note that since $i>1$, $B'$ is not the root of $Y$. By \cref{enum:hook} of
	\cref{def:dec} again, the hook $h(B')$ is non-adjacent to all the vertices of $G_B$. Moreover, by \cref{lem:bip-gtd}, $h(B')$ belongs to the same side as $B$ and $h(B)$ belongs to the same side as $D$.  Let
	$a_1,\dots,a_p,b_1,\dots,b_p$ witness $\ind(G_B)=p$.  By
	exchanging $a_\ell$ with $b_{p+1-\ell}$ (for every $\ell\in \{1,\ldots,p\}$) if necessary we may
	assume that $a_1,\ldots,a_p\in D$ and $b_1,\dots,b_p\in B$. Then $h(B),a_1,\dots,a_p,h(B'),b_1,\dots,b_p$ witness
	$\bind(G)\geq p+1$. It follows that $\bind(G_B)<\bind(G)$. 
\end{proof}
%

\begin{lemma}
	\label{lem:bag_indc}
	Let $G$ be a connected bipartite graph and let $Y$ be a {\gtd} of $G$.
	Let $B$ be a bag of $Y$ with level $1$ and let $G_B$ be the
	subgraph of $G$ induced by $B$ and the union of all the bags that are descendants of $B$  in $Y$ at levels $i'\not\equiv 1\bmod 2$.
	Let $C$ be a connected component of $\ti{G}_B$ and $Y_C$ be a {\gtd} of $H$ rooted at a vertex $r_C\in V(C)\cap B$.
	Let $B'$ be a bag of $Y_C$ with level $j\geq 1$ and let $G_{B'}$ be the
	subgraph of $G$ induced by $B'$ and the union of all the bags of $Y_C$ that are descendants of $B'$ in $Y_C$ at levels $j'\not\equiv j\bmod 2$.
	Then $\bind(G_{B'})<\bind(G)$.
\end{lemma}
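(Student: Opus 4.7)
The plan is to mimic the proof of \cref{lem:bag_ind}, splitting on whether $j>1$ or $j=1$. When $j>1$, I will reduce directly to \cref{lem:bag_ind} applied to $C$ with its \gtd{} $Y_C$, using that $\bind$ is invariant under bipartite complement and monotone under induced subgraphs. When $j=1$, I will run a bipartite analogue of the $j=1$ case of \cref{lem:cog2}, extending any $\bind$-witness of $G_{B'}$ by prepending the initial vertices $r$ of $Y$ and $r_C$ of $Y_C$ to get a witness in $G$ of length one greater.

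For $j>1$, applying \cref{lem:bag_ind} to $(C, Y_C, B')$ gives $\bind(C_{B'})<\bind(C)$, where $C_{B'}$ is the subgraph of $C$ induced by $V(G_{B'})$. Since $C\subseteq \ti G_B$ shares its bipartition, $G_{B'}$ and $C_{B'}$ are bipartite complements of each other, so they have the same $\bind$. Chaining with $\bind(C)\leq\bind(\ti G_B)=\bind(G_B)\leq\bind(G)$ yields
\[
\bind(G_{B'})=\bind(C_{B'})<\bind(C)\leq\bind(G).
\]

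For $j=1$, I first collect two adjacency facts. The hook property of $Y$ applied to $B$ (at level $1$) gives that $r$ is adjacent in $G$ to every vertex of $B$, and hence, by \cref{lem:bip-gtd} applied to $Y_C$, to every vertex of $V(G_{B'})\setminus B'$ (which is contained in $B$). The hook property of $Y_C$ applied to $B'$ (at level $1$) gives that $r_C$ is adjacent in $C$ to every vertex of $B'$; since $C\subseteq\ti G_B$, the vertex $r_C$ is \emph{non-}adjacent in $G$ to every vertex of $B'$. Moreover, by \cref{lem:bip-gtd}, $B'$ lies on the side of $G$ opposite to $B$, on which $r$ also lies, while $V(G_{B'})\setminus B'\subseteq B$ and $r_C\in B$ lie on the $B$-side.

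Given any witness $a_1,\dots,a_k,b_1,\dots,b_k$ of $\bind(G_{B'})=k$, I apply the symmetry swap $a_\ell\leftrightarrow b_{k+1-\ell}$ if needed to arrange $a_1,\dots,a_k\in B'$ and $b_1,\dots,b_k\in V(G_{B'})\setminus B'$. Prepending $a_0\coloneqq r$ and $b_0\coloneqq r_C$ then produces a witness of $\bind(G)\geq k+1$: the side assignment is consistent, the required edges $a_0 b_\ell=r b_\ell$ for $\ell\geq 1$ hold by the hook of $B$ in $Y$, and the required non-edges $b_0 a_\ell=r_C a_\ell$ for $\ell\geq 1$ hold by the complemented hook of $B'$ in $Y_C$. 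The main obstacle is the bookkeeping: hooks from $Y$ describe $G$-adjacencies while hooks from $Y_C$ describe $C$-adjacencies, and one must translate between them through the bipartite complement. Once that is kept straight, the argument is a faithful bipartite version of the $j=1$ case of \cref{lem:cog2}.
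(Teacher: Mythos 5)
Your proof is correct and follows essentially the same route as the paper: for $j>1$ you reduce to \cref{lem:bag_ind} applied to $C$ and $Y_C$, translating via the invariance of $\bind$ under bipartite complement and its monotonicity under induced subgraphs, and for $j=1$ you extend a $\bind$-witness of $G_{B'}$ by prepending $r$ and $r_C$, using the hooks of $B$ in $Y$ (in $G$) and of $B'$ in $Y_C$ (in $C$, hence complemented in $G$), together with \cref{lem:bip-gtd} for the side bookkeeping. This matches the paper's argument; you just spell out the $j>1$ chain of inequalities more explicitly than the paper's one-line appeal to \cref{lem:bag_ind}.
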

\begin{proof}
	For bags $B'$ with level $j>1$ the result follows from \cref{lem:bag_ind}. So let $B'$ be a bag of $Y_C$ with level $1$. Let $L,R$ be the sides of $G$, where the initial vertex $r$ of $Y$ belongs to $L$. Then $B\subseteq R$, so in particular $r_C\in R$. Letting $D'=V(G_{B'})\setminus B'$ be the union of descendants of $B'$ in $Y_C$ at levels $j'\not\equiv 1\bmod 2$, we have $B'\subseteq L$ and $D'\subseteq R$.
	
	Let $a_1,\dots,a_p,b_1,\dots,b_p$ witness $\bind(G_{B'})=p$.  By
	exchanging $a_\ell$ with $b_{p+1-\ell}$ (for every $\ell\in \{1,\ldots,p\}$) if necessary we may
	assume that $a_1,\dots,a_p\in B'$ and $b_1,\ldots,b_p\in D'$. Note that $D'=V(G_{B'})\cap R\subseteq V(G_B)\cap R=B$, so $b_1,\dots,b_p\in B$.
	As $r$ adjacent in $G$ to all the vertices in~$B$ and as $r_C$ is non-adjacent in $G$ to all the vertices in $B'$, the vertices $r,a_1,\dots,a_p,r',b_1,\dots,b_p$ witness
	$\bind(G)\geq p+1$. It follows that $\bind(G)>\bind(G_{B'})$. 
\end{proof}

With all the tools prepared, we are ready to complete the argument.


\begin{proof}[Proof of \cref{thm:bipartite-depth2}]
Fix $t$ for the remainder of the proof; we may assume without loss of generality that $t\geq 5$.
	For $k\geq 1$, let 
	\begin{align*}
	N_k&=\frac{\bigl(2t^{1/(t-1)}\bigr)^{\mathrlap{t^{2k-2}}}}{t^{1/(t-1)}}.
\intertext{
	Note that $N_1=2$ and that the sequence satisfies the recurrence }
	    N_k &=  t^{t+1}\cdot N_{k-1}^{t^2}&\text{(for $k\geq 2$)}.
	\end{align*}
	
For a given $k$,
	let $Q(k)$ be the statement that every bipartite graph that excludes $P_t$ and $\ti{P}_t$ as induced subgraphs and has bipartite index at most $k$, admits a cosplit of size at most $N_k$ and height at most $2k$.  
	We shall prove $Q(k)$ by induction on $k$. Provided we achieve this, \cref{thm:bipartite-depth2} will immediately follow by \cref{lem:Hk-ind}. 
	
	
	For the base case $k=1$, \cref{fact:index1} implies that a bipartite graph of bipartite index $1$ is a disjoint union of bicliques. Hence it admits a trivial $2$-cosplit of size at most $N_1=2$ and height at most $2$.
	
	We proceed to the induction step; that is, we need to prove $Q(k)$ assuming $Q(k-1)$ for $k\geq 2$. Let $L,R$ be the bipartition of $G$.
	By performing the reasoning in $\ti{G}$ instead of in $G$ and using \cref{lem:compl} if necessary, we may assume that at least one of the sides $L$ and $R$ has no isolated vertices. Further, by switching the sides if necessary, we may assume that there are no isolated vertices in $R$. Hence, we may choose a {\gtd} $Y$ of $G$ so that every root bag of $Y$ is contained in~$L$. By \cref{fact:height_gtd}, $Y$ has height at most $t-2$.
	
	For each $i\in \{0,1,\ldots,t-2\}$, let $U_i$ be the union of bags of $Y$ with level $i$. Thus $U_i\subseteq L$ when~$i$ is even and $U_i\subseteq R$ when $i$ odd. Further, let 
	$$G_i\coloneqq G[U_i,U_{i+1}\cup U_{i+3}\cup \ldots \cup U_{t'-2}\cup U_{t'}],$$
	where $t'\in \{t-3,t-2\}$ is of different parity than $i$. We argue the following claim:
	\begin{equation}\label{eq:cosplit}\tag{$\clubsuit$}
	 \parbox{\dimexpr\linewidth-4em}{For each $i\in \{0,1,\ldots,t-2\}$, the graph $G_i$ admits a $2$-cosplit of size at most $t\cdot N_{k-1}^{t}$ and height at most $2k$.} 
	\end{equation}
	Observe that $Q(k)$ follows from~\eqref{eq:cosplit} combined with \cref{lem:partition}; here we use the partition $\Rr=\{U_0,U_1,\ldots,U_{t-2}\}$. Hence, we are left with proving~\eqref{eq:cosplit}.
	
	For every bag $B$ of $Y$, let $G_B$ be the subgraph of $G$ induced by $B$ and the union of all descendants of $B$ at levels of parity different from that of the level of $B$. Observe that for each $i\in \{0,1,\ldots,t-2\}$, $G_i$ is the disjoint union of graphs $G_B$ for $B$ ranging over bags with level $i$. Therefore, from \cref{lem:merge} we conclude that in order to prove~\eqref{eq:cosplit} for $i$, it suffices to prove the following claim:
	\begin{equation}\label{eq:cosplit-single}\tag{$\diamondsuit$}
	 \parbox{\dimexpr\linewidth-4em}{For each bag $B$ of $Y$ with level $i$, the graph $G_B$ admits a $2$-cosplit of size at most $t\cdot N_{k-1}^{t}$ and height at most $2k-1$.} 
	\end{equation}
	Hence, from now on we focus on proving~\eqref{eq:cosplit-single}.
	
	First, consider the case $i=0$. Then $B=\{u\}$ for some vertex $u$, and all other vertices of $G_B$ are on the opposite side of $u$. Therefore, $G_B$ is the disjoint union of a star and a collection of isolated vertices, so $G_B$ admits a trivial $2$-cosplit of size $2$ and depth $2$. This settles~\eqref{eq:cosplit-single} for $i=0$.
    
    Next, consider the case $i>1$. By \cref{lem:bag_ind}, $\bind(G_B)<\bind(G)$ for each $B$ as above, hence by induction $G_B$ admits a cosplit of size at most $N_{k-1}$ and height at most $2k-2$. This settles~\eqref{eq:cosplit-single} for $i>1$.
    
    We are left with the case $i=1$. Let $G'_B$ be the graph obtained from $G_B$ by removing all vertices of $V(G_B)\cap L$ that are isolated in $\ti{G}_B$. Further, let $Y'$ be a \gtd{} of $\ti{G}_B'$. Since no vertex contained $L$ is isolated in $\ti{G}_B'$, we can choose $Y'$ so that all root bags of $Y'$ are contained in~$R$ (and thus also in $B$, because $R\cap V(G_B)=B$). By \cref{fact:height_gtd}, $Y'$ has height at most $t-2$.
    
    For $j\in \{0,1,\ldots,t-2\}$, let $W_j$ be the union of bags of $Y'$ with level $j$, where in $W_1$ we additionally include all vertices of $V(G_B)\setminus V(G'_B)$. Note that thus $\{W_0,\ldots,W_{t-2}\}$ is a partition of $V(G_B)$, $W_j\subseteq R$ for even $j$ and $W_j\subseteq L$ for odd $j$. 
    Denote 
	$$G'_{j}\coloneqq G[W_j,W_{j+1}\cup W_{j+3}\cup \ldots \cup W_{t'-2}\cup W_{t'}],$$
	where $t'\in \{t-3,t-2\}$ is of different parity than $j$.
    By \cref{lem:partition}, to prove \eqref{eq:cosplit-single} it suffices to prove the following.
    \begin{equation}\label{eq:cosplit-co}\tag{$\heartsuit$}
	 \parbox{\dimexpr\linewidth-4em}{For every $j\in \{0,1,\ldots,t-2\}$, the graph $G'_j$ admits a $2$-cosplit of size at most $N_{k-1}$ and height at most $2k-1$.} 
	\end{equation}
	Hence, from now on we focus on proving~\eqref{eq:cosplit-co}.
	
	For every bag $B'$ of $Y'$, let $G'_{B'}$ be the subgraph of $G$ induced by $B'$ and all descendants of $B'$ in~$Y'$ with levels of parity different from the level of $B'$. Observe that for every $j\in \{0,1,\ldots,t-2\}$, the graph $G'_j$ is the bipartite join of graphs $G'_{B'}$ for $B'$ ranging over bags of $Y'$ with level $j$. (In the case $j=1$ we also need to include in the join the single-vertex graph on $u$ for every $u\in V(G_B)\setminus V(G_B')$.) Therefore, by \cref{lem:merge,lem:compl}, to prove~\eqref{eq:cosplit-co} it suffices to prove the following.
	\begin{equation}\label{eq:cosplit-coco}\tag{$\spadesuit$}
	 \parbox{\dimexpr\linewidth-4em}{For every bag $B'$ of $Y'$ with level $j$, the graph $G'_{B'}$ admits a $2$-cosplit of size at most~$N_{k-1}$ and height at most $2k-2$.} 
	\end{equation}
	
	In the case $j=0$, we again have that $B'=\{u\}$ for some vertex $u$ and all other vertices of $G'_{B'}$ belong to the opposite side of this bipartite graph. It follows that $G'_{B'}$ is the disjoint union of a star and a collection of isolated vertices, so $G'_{B'}$ admits a trivial $2$-cosplit of size $2$ and height $2$. This settles~\eqref{eq:cosplit-coco} for $j=0$.
	
	We are left with the final case $j\geq 1$. By \cref{lem:bag_indc}, we have $\bind(G'_{B'})<\bind(G)$. So by induction we infer that $G'_{B'}$ admits a $2$-cosplit of size at most $N_{k-1}$ and height at most $2k-2$. This settles~\eqref{eq:cosplit-coco} for $j\geq 1$ and concludes the proof.
\end{proof}

\section{Sparsification: proof of \cref{thm:main-shb}}

In this section we use the results gathered so far to prove our main result, \cref{thm:main-shb}. As mentioned, the strategy is to show that for fixed $t,k\in \N$, graphs excluding semi-induced~$P_t$,~$\ti{P}_t$, and $H_k$ can be sparsified using an $\FO$ transduction. Then we exploit the known connections between treedepth and the existence of long paths as subgraphs.

We first show that cographs with bounded height can be efficiently sparsified. Here, by a {\em{coloring procedure}} we mean an algorithm that inputs a graph and outputs some coloring of it\footnote{Formally, we defined colorings to use an infinite number of colors, but in what follows, coloring procedures always output colorings using a finite number of colors. One may imagine that the remaining colors are set to be empty sets and are thus omitted in the description of the output. They are also never used by any formula that we are about to construct.}.

\begin{lemma}
	\label{lem:sparsify_cograph}
	For every  positive integer $h$ there exist a  polynomial-time coloring procedure $\mathsf A$, a class $\mathscr D_1$ of colored graphs with treedepth at most $h$, and two first-order formulas $\varphi_1,\psi_1$, such that 
	$(\varphi_1,\psi_1)$ is a simple equivalence of the classes $\mathsf A(\mathscr C)$ and $\mathscr D_1$, where $\mathscr C$ is the class of all cographs with height at most $h$.
\end{lemma}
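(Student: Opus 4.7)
The plan is to encode the cotree of a cograph of height at most~$h$ as a bounded-depth rooted tree on the same vertex set, while storing in a finite coloring all the information needed to move back and forth between the two. Given $G\in\Cc$, I would first apply a standard polynomial-time cograph recognition algorithm to compute a cotree $T$ of $G$: a rooted tree of height at most~$h$ with leaf set $V(G)$ and internal nodes labeled by $\sqcup$ (union) or $\times$ (join). For each internal node $u$ of $T$ I would fix a canonical representative leaf $r(u)\in V(G)$ (say the leftmost descendant leaf in some fixed DFS order), and set $r(v)=v$ for leaves $v$. The coloring procedure $\mathsf A$ would then assign to each $v\in V(G)$ the pair $(d(v),\sigma(v))$, where $d(v)\in\{0,1,\dots,h\}$ is the depth of $v$ in $T$ and $\sigma(v)\in\{\sqcup,\times,\bot\}^{h+1}$ is the \emph{representative profile} of $v$: the entry $\sigma(v)_i$ records the label of the unique depth-$i$ internal node $u$ with $r(u)=v$, or $\bot$ if no such $u$ exists. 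Since the alphabet is finite and the profile has length at most $h+1$, only finitely many colors are ever used, and all of them are computable in polynomial time.

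The sparse graph $G'$ would be obtained from $T$ by contracting, for every internal node $u$, the node $u$ with its representative~$r(u)$. This produces a rooted tree on vertex set $V(G)$ whose longest root-to-leaf path is no longer than the one in $T$, so its treedepth is bounded in terms of~$h$; the class $\Dd_1$ is then taken to be the collection of all such colored contracted trees. For the formula $\psi_1(x,y)$ I would exploit the fact that in a rooted tree of depth at most $h$, the ancestor and least-common-ancestor relations are first-order definable with quantifier depth bounded by~$h$: given distinct $x$ and $y$, let $w$ be their LCA in $G'$; by construction $w=r(x\wedge_T y)$, the depth of $x\wedge_T y$ in $T$ is recoverable from $\sigma(w)$, and the label of $x\wedge_T y$ is stored in the corresponding coordinate of $\sigma(w)$. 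The formula $\psi_1$ would declare $x\sim_G y$ exactly when that coordinate equals~$\times$. Symmetrically, $\varphi_1(x,y)$ would recognize the parent-child relation of the contracted tree by comparing the depth and profile coordinates of $x$ and $y$, combined with an adjacency check in $G$ that certifies that $x$ and $y$ belong to the same branch of $T$.

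The main obstacle will be exactly this last point: a cotree of bounded height may contain unboundedly many subtrees at any given level, yet the coloring is finite, so $\varphi_1$ cannot tag subtrees with distinct colors directly. My planned resolution relies on the cograph twin characterization: two vertices lie in the same depth-$d$ subtree of $T$ if and only if they have identical adjacency in $G$ to every vertex outside that subtree. This condition is first-order expressible with quantifier depth bounded by~$h$ once the representative markers recorded in $\sigma$ are available, so iterating it across all relevant depths supplies the information needed by $\varphi_1$. Checking that the resulting $\mathsf I_{\varphi_1}$ and $\mathsf I_{\psi_1}$ are mutually inverse on the intended classes, and verifying the precise treedepth bound (possibly up to an off-by-one adjustment in the contraction), is then a routine matter.
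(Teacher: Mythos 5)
Your overall strategy matches the paper's: encode the cotree of height~$h$ as a bounded amount of per-vertex color information (markers of representatives at each level), produce a bounded-treedepth graph, and recover the adjacency of $G$ from the cotree-node labels at least common ancestors. The paper realizes this via the recursively defined partitions $\Pp_0,\dots,\Pp_h$ where $\Pp_i$ refines $\Pp_{i-1}$ into connected components of $G$ (for odd $i$) or of $\overline{G}$ (for even $i$) restricted to each part, which is exactly the partition of $V(G)$ into depth-$i$ subtrees of a cotree; your ``representative profile'' plays the same role as the paper's $P_i$-markers.

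However, there is a genuine gap in the step you flag as ``the main obstacle.'' The claimed characterization---``two vertices lie in the same depth-$d$ subtree of $T$ if and only if they have identical adjacency in $G$ to every vertex outside that subtree''---is false in the only-if direction. Take $G$ to be two isolated vertices $a$ and $b$: the cotree is a union node with two leaf children, so $a$ and $b$ lie in \emph{different} depth-$1$ subtrees, yet they have identical adjacency to every other vertex (they are false twins). More generally, twins placed under a common union (or join) node are not separated by your criterion, so the induced partition would be strictly coarser than the subtree partition and would fail to reach the discrete partition at depth~$h$. There is also a latent circularity: quantifying over ``every vertex outside that subtree'' requires already knowing the subtree, and the markers in $\sigma$ alone do not resolve this. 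The paper's formulation sidesteps both issues cleanly: $\varpi_i(x,y)$ is defined recursively as ``$\varpi_{i-1}(x,y)$ holds and $x,y$ lie in the same connected component of $G$ (resp.\ $\overline{G}$) restricted to $\{z:\varpi_{i-1}(x,z)\}$,'' and membership in the same component is first-order expressible because connected cographs have diameter at most $3$. Replacing your twin test by this connectivity test would repair the argument and bring it in line with the paper's proof.
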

\begin{proof}
We first define a sequence of partitions $\Pp_0,\ldots,\Pp_h$ of the vertex set of $G$ as follows:
\begin{itemize}
\setlength\itemsep{-.1em}
 \item $\Pp_0=\{V(G)\}$.
 \item For odd $i\geq 1$, $\Pp_i$ is obtained from $\Pp_{i-1}$ by splitting every part $A\in \Pp_{i-1}$ into the vertex sets of the connected components of $G[A]$.
 \item For even $i\geq 1$, $\Pp_i$ is obtained from $\Pp_{i-1}$ by splitting every part $A\in \Pp_{i-1}$ into the vertex sets of the connected components of $\ov{G}[A]$. 
\end{itemize}

It is straightforward to see that provided $G$ is a cograph of height at most $h$, $\Pp_h$ will be the discrete partition with every vertex in its own part. Clearly, partitions $\Pp_0,\ldots,\Pp_h$ can be computed in polynomial time.

Next, for $0\leq i\leq h$ we inductively define formulas $\varpi_i(x,y)$ expressing that $x$ and $y$ are in the same part of $\Pp_i$. For $i=0$ this is trivial. For odd $i\geq 1$, $\varpi_i(x,y)$ can be obtained from $\varpi_{i-1}(x,y)$ by checking that $\varpi_{i-1}(x,y)$ holds and $x$ and $y$ are in the same connected component of the subgraph of $G$ induced by the part of $\Pp_{i-1}$ to which $x$ and $y$ belong. Note here that in the setting of of cographs, belonging to the same connected component is equivalent to being at distance at most $3$, hence it can be expressed in $\FO$. The construction for even $i\geq 1$ is the same, except we complement the subgraph induced by the part of $x$ and $y$.

The coloring procedure $\mathsf A$ first computes the partitions $\Pp_0,\ldots,\Pp_h$. Then, for each $i\in \{0,1,\ldots,h\}$, it marks one arbitrary vertex in each part of $\Pp_i$ with predicate $P_i$. The coloring of~$G$ obtained in this way is the output of $\mathsf A$ on $G$.

%


The formula $\varphi_1(x,y)$ expresses that for some $0\leq i\leq h$, $\varpi_i(x,y)$ holds and one of $x$ and $y$ is marked with $P_i$. It is straightforward to see that then $\mathsf I_{\varphi_1}(\mathsf A(G))$ is a colored graph with treedepth at most $h$.

Denote by $N[v]$ the closed neighborhood of a vertex $v$ (i.e. the set formed by $v$ and all its neighbors).
It is easily checked that two vertices $u$ and $v$ of $G$ are adjacent if and only if, in $\mathsf I_{\varphi_1}(\mathsf A(G))$, the maximum integer $i$
such that $N[u]\cap N[v]$ contains a vertex marked with $P_i$ is odd. As $0\leq i\leq h$, this property is first-order definable by a formula $\psi_1$.
\end{proof}

We now give an analogous result for bi-cographs with bounded height. 
As the proof is very similar (considering bi-complement instead of complements), we omit it.

\begin{lemma}
		\label{lem:sparsify_bi-cograph}
	For every  positive integer $h$ there exist a  polynomial time coloring procedure $\mathsf B$, a class $\mathscr D_2$ of colored graphs with treedepth at most $h$, and two first-order formulas $\varphi,\psi$, such that 
	$(\varphi_2,\psi_2)$ is a simple equivalence of the classes $\mathsf B(\mathscr B)$ and $\mathscr D_2$, where $\mathscr B$ is the class of all bi-cographs with height at most $h$.
\end{lemma}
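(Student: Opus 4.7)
The plan is to mirror the previous lemma's proof, replacing complementation by bipartite complementation throughout and keeping track of the fixed bipartition. Let $G$ be a bi-cograph with bipartition $(L,R)$ and height at most $h$. I would define a sequence of partitions $\Pp_0,\ldots,\Pp_h$ of $V(G)$ by $\Pp_0=\{L,R\}$, with $\Pp_i$ (for odd $i\geq 1$) obtained from $\Pp_{i-1}$ by splitting each part $A\in\Pp_{i-1}$ on side $L$ together with its ``mate'' part on side $R$ into connected components of the induced bipartite graph, and for even $i\geq 1$ by doing the same with the bipartite complement. An induction on the bi-cotree shows that $\Pp_h$ is the discrete partition, and each partition is polynomial-time computable.

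Next, I would inductively define formulas $\varpi_i(x,y)$ expressing ``$x$ and $y$ lie in the same part of $\Pp_i$.'' The base case $\varpi_0$ just checks that $x$ and $y$ both belong to $L$ or both to $R$, using unary predicates that $\mathsf B$ adds to mark the sides. For the inductive step one uses that in a connected bi-cograph the diameter is at most $3$: indeed, the root of the bi-cotree of a connected bi-cograph must be a bipartite-join node, so two vertices on the same side are at distance at most $2$ and on opposite sides at most $3$. Hence connectivity inside a $\Pp_{i-1}$-class (in $G$ or in its bipartite complement) is $\FO$-definable as a bounded-distance property, exactly as in \cref{lem:sparsify_cograph}.

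The coloring procedure $\mathsf B$ then marks the two sides of the bipartition, computes $\Pp_0,\ldots,\Pp_h$, and marks one arbitrary representative of each part of $\Pp_i$ with a unary predicate $P_i$. The formula $\varphi_2(x,y)$ asserts that for some $0\leq i\leq h$, $\varpi_i(x,y)$ holds and at least one of $x,y$ carries $P_i$. The resulting graph $\mathsf I_{\varphi_2}(\mathsf B(G))$ inherits a forest structure from the refinement sequence $\Pp_0\succeq\Pp_1\succeq\cdots\succeq\Pp_h$ whose depth is at most $h$, so it has treedepth at most $h$; we let $\mathscr D_2$ be the image of $\mathsf B(\mathscr B)$ under $\mathsf I_{\varphi_2}$. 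The inverse formula $\psi_2(x,y)$ recovers the edge relation of $G$: for $u\in L$, $v\in R$, we look at the maximum $i\in\{0,\ldots,h\}$ such that $N[u]\cap N[v]$ contains a vertex marked $P_i$ in $\mathsf I_{\varphi_2}(\mathsf B(G))$; by construction $uv\in E(G)$ iff this $i$ is even, since bipartite-join nodes in the bi-cotree occur precisely at the even levels of the refinement.

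The only genuinely new ingredient compared to \cref{lem:sparsify_cograph} is the observation that the bipartition must be tracked and that the connectivity tests are carried out in bipartite graphs and their bipartite complements; the diameter-$3$ argument for connected bi-cographs handles this cleanly. The rest -- verifying that $(\varphi_2,\psi_2)$ is a simple equivalence, i.e.\ $\mathsf I_{\psi_2}\circ\mathsf I_{\varphi_2}$ is the identity on $\mathsf B(\mathscr B)$, and that $\mathsf I_{\varphi_2}(\mathsf B(G))$ has treedepth at most $h$ -- is a direct transcription of the cograph argument.
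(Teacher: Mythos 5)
Your overall plan (mirror \cref{lem:sparsify_cograph}, replacing complementation with bipartite complementation) is exactly what the paper intends when it says the proof is ``very similar'' and omits it. However, the one ingredient you single out as genuinely new --- the diameter bound for connected bi-cographs --- is wrong, and your argument for it does not hold up. You claim that in a connected bi-cograph two vertices on the same side are at distance at most~$2$, hence diameter at most~$3$. This fails: consider the bi-cograph $G$ with $L=\{u,v,x\}$, $R=\{w_1,w_2\}$ and edges $uw_1,\ vw_2,\ xw_1,\ xw_2$. It is connected, it is a bi-cograph (it is the bipartite join of the singleton $\{x\}$ with the bi-cograph on $\{u,v,w_1,w_2\}$ that is the bipartite disjoint union of the two edges $uw_1$ and $vw_2$), and $u,v$ lie on the same side $L$ with no common neighbor, so $d(u,v)=4$. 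The gap in your argument is that in a bipartite join of $V_1,\dots,V_m$, two $L$-vertices in the same $V_i$ need not have a common neighbor when all of $R$ happens to lie inside $V_i$. In fact $P_6$ itself is a connected bi-cograph (it is the bipartite join of the bi-cograph on $\{p_1,p_3,p_4,p_6\}$, which is a bipartite disjoint union of the edge $p_3p_4$ with the isolated vertices $p_1,p_6$, and the edgeless bi-cograph on $\{p_2,p_5\}$), so the diameter can reach~$5$. The fix is straightforward: bi-cographs are $P_7$-free, and a shortest path in a connected graph is an induced path, so every connected bi-cograph has diameter at most~$5$; ``$x$ and $y$ lie in the same connected component of $G[A]$'' (or of $\ti{G}[A]$) is therefore still $\FO$-definable as a radius-$5$ bounded-distance property, just not radius~$3$.

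There is also a bookkeeping problem with $\Pp_0=\{L,R\}$ and the undefined ``mate'' pairing. As written, a vertex $u\in L$ and a vertex $v\in R$ are never in the same part of any $\Pp_i$, so in $\mathsf I_{\varphi_2}(\mathsf B(G))$ they never point to a common representative, $N[u]\cap N[v]$ is always empty, and your $\psi_2$ recovers nothing. You should either take $\Pp_0=\{V(G)\}$ and let parts span both sides, recording the bipartition with a separate unary predicate (this is the direct transcription of the cograph case); or, if you insist on single-sided parts, mark one representative per pair of mate parts rather than one per part, and recompute the parity of the recovery accordingly. With the corrected diameter bound and either of these fixes for the representative marking, the proof goes through.
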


The next theorem is an easy consequence of the two preceding lemmas.

\begin{theorem}
	\label{thm:sparsification_cosplit}
	Let $N,k$ be integers, and let $\mathscr C$ be the class of all graphs that admit a $2$-cosplit of size at most $N$ and height at most $k$. Then there exist a coloring procedure $\mathsf C$, a class $\mathscr D$ of $dN$-degenerate graphs, and two first-order formulas $\varphi,\psi$, such that the following hold:
\begin{itemize}
	\item Given a graph $G$ and a $2$-cosplit of $G$ of size at most $N$ and height at most $k$, the procedure $\mathsf C$ computes the coloring $\mathsf C(G)$ in polynomial time.
	\item The pair $(\varphi,\psi)$ is a simple equivalence of the classes $\mathsf C(\mathscr C)$ and $\mathscr D$.
\end{itemize}
\end{theorem}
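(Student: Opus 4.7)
The plan is to run the cograph-sparsification of \cref{lem:sparsify_cograph} on each part and the bi-cograph-sparsification of \cref{lem:sparsify_bi-cograph} on each pair of parts of the given $2$-cosplit, using fresh copies of their predicates so that the pieces do not interfere. Given $G \in \Cc$ together with a $2$-cosplit $\{A_1,\dots,A_{N'}\}$ of size $N' \le N$ and height at most $k$, the procedure $\mathsf C$ will first mark every vertex with one of $N$ ``part'' predicates $\Pi_1,\dots,\Pi_N$ indicating which $A_i$ contains it; then, for each $i \in [N]$, invoke $\mathsf A$ of \cref{lem:sparsify_cograph} on the cograph $G[A_i]$ of height at most $k$, relabelling its output predicates as $P^{(i)}_0,\dots,P^{(i)}_k$; and finally, for each pair $i<j$, invoke $\mathsf B$ of \cref{lem:sparsify_bi-cograph} on the bi-cograph $G[A_i,A_j]$ of height at most $k$, relabelling its predicates as $Q^{(i,j)}_0,\dots,Q^{(i,j)}_k$. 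There are $N + \binom{N}{2}$ such invocations, each polynomial, so $\mathsf C$ runs in polynomial time.

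Next, we build $\varphi$ as a disjunction that branches on whether its two arguments lie in the same part or in different parts, using the $\Pi_i$'s as guards. For the same-part branch $i$, we plug in the formula $\varphi_1$ of \cref{lem:sparsify_cograph} with its predicates relabelled to $P^{(i)}_\ell$; call this $\varphi^{(i)}_1$. For the cross-part branch $(i,j)$, we plug in $\varphi_2$ of \cref{lem:sparsify_bi-cograph} with its predicates relabelled to $Q^{(i,j)}_\ell$; call this $\varphi^{(i,j)}_2$. Setting $\tau_{ij}(x,y) \coloneqq (\Pi_i(x) \wedge \Pi_j(y)) \vee (\Pi_j(x) \wedge \Pi_i(y))$, we define
\[
\varphi(x,y) \ \coloneqq\ \bigvee_{i \in [N]} \bigl(\Pi_i(x) \wedge \Pi_i(y) \wedge \varphi^{(i)}_1(x,y)\bigr) \ \lor\ \bigvee_{1 \le i < j \le N} \bigl(\tau_{ij}(x,y) \wedge \varphi^{(i,j)}_2(x,y)\bigr),
\]
and define $\psi$ by the same formula with $\psi^{(i)}_1, \psi^{(i,j)}_2$ in place of $\varphi^{(i)}_1, \varphi^{(i,j)}_2$. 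Because the $\Pi_i$'s partition $V(G)$ and each lemma supplies a simple equivalence on its slice, the pair $(\varphi, \psi)$ is a simple equivalence between $\mathsf C(\Cc)$ and the class $\Dd \coloneqq \mathsf I_\varphi(\mathsf C(\Cc))$.

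It remains to bound the degeneracy of graphs in $\Dd$. The sparse graph $H \coloneqq \mathsf I_\varphi(\mathsf C(G))$ decomposes into pieces of treedepth at most $k$: an induced piece on each $A_i$ and a bipartite piece between each pair $A_i, A_j$. A graph of treedepth at most $k$ has at most $(k-1) n$ edges, and treedepth is monotone under induced subgraphs, so these bounds survive restriction to any $V' \subseteq V(G)$. Summing over the $N$ parts and $\binom{N}{2}$ pairs yields
\[
|E(H[V'])| \ \le\ (k-1)|V'| + (k-1)(N-1)|V'| \ =\ (k-1) N |V'|,
\]
so $H[V']$ contains a vertex of degree at most $2(k-1) N$, giving degeneracy at most $2(k-1) N$; we take $d = 2(k-1)$. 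The only real obstacle is the bookkeeping of fresh predicate renamings, which ensures that the $O(N^2)$ sparsified substructures do not contaminate each other; everything else is assembly of the two preceding lemmas.
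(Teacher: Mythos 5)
Your proof is essentially the paper's, just fleshed out: run the cograph sparsifier of \cref{lem:sparsify_cograph} on each part and the bi-cograph sparsifier of \cref{lem:sparsify_bi-cograph} on each pair, keep all predicate families disjoint, and build $\varphi,\psi$ as a disjunction guarded by part-indicator predicates --- exactly the recipe the paper sketches --- and you additionally supply the degeneracy bound ($d=2(k-1)$) that the theorem statement uses but whose proof the paper omits. One detail both you and the paper gloss over: when the subformulas $\varphi_1^{(i)}$, $\varphi_2^{(i,j)}$ (and the $\psi$-counterparts) are plugged into $\varphi$, their internal quantifiers must be relativized to the relevant part(s) $\Pi_i$ (resp.\ $\Pi_i\cup\Pi_j$), since those formulas perform distance/common-neighborhood tests that would otherwise range over vertices outside the intended slice and could misevaluate.
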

\begin{proof}
	The coloring procedure $\mathsf C$ first colors the vertices according to the $2$-cosplit.
	Then, it calls the procedure $\mathsf A$ of \cref{lem:sparsify_cograph} on the graph induced by each part, and the procedure $\mathsf B$ of \cref{lem:sparsify_bi-cograph} on the bipartite subgraph induced by each pair of distinct parts. Here, all unary predicates used by all invocation of procedures $\mathsf A$ and $\mathsf B$ are kept distinct. Then, one can easily construct the formulas $\varphi$ and $\psi$ using a disjunction on the possible parts of the $2$-cosplit to which $x$ and $y$ belong.
\end{proof}

By combining \cref{thm:sparsification_cosplit} with \cref{thm:semi-induced} we derive the following statement.

\begin{corollary}
	\label{thm:sparsification}
	Every class of graphs that excludes a path, the bipartite complement of a path, and a half-graph as semi-induced subgraphs is transduction equivalent to a class of degenerate graphs.
\end{corollary}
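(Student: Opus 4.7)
The plan is to chain the two main ingredients developed earlier: Theorem~\ref{thm:semi-induced} produces a $2$-cosplit of bounded size and height for every graph in the class, and Theorem~\ref{thm:sparsification_cosplit} converts such graphs into degenerate graphs via a simple equivalence. What remains is to repackage the simple equivalence as a two-sided transduction.

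More concretely, let $\Cc$ be a class excluding some $P_t$, $\ti P_t$ and $H_k$ as semi-induced subgraphs. Theorem~\ref{thm:semi-induced} yields an integer $N$ such that every $G\in\Cc$ admits a $2$-cosplit of size at most $N$ and height at most $4k$. Hence $\Cc$ is contained in the class $\mathscr C$ appearing in Theorem~\ref{thm:sparsification_cosplit} (with parameter $h=4k$). Applying that theorem furnishes a polynomial-time coloring procedure $\mathsf C$, a class $\mathscr D$ of $dN$-degenerate graphs, and formulas $\varphi,\psi$ such that $(\varphi,\psi)$ is a simple equivalence of $\mathsf C(\mathscr C)$ and $\mathscr D$. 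Define
\[
\Dd \;\coloneqq\; \{\,\mathsf I_\varphi(\mathsf C(G)) : G\in\Cc\,\}\;\subseteq\;\mathscr D.
\]
Then $\Dd$ is a class of degenerate graphs, and it is the class we will show to be transduction equivalent to~$\Cc$.

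For the direction $\Dd\subseteq\Trs_1(\Cc)$ we design a transduction $\Trs_1$ that, on input $G\in\Cc$, nondeterministically guesses (i) a colouring of the vertices by the $\leq N$ parts of some $2$-cosplit, and (ii) all auxiliary unary predicates introduced by $\mathsf C$; it then applies the interpretation $\mathsf I_\varphi$. Since colorings in a transduction are entirely arbitrary, one of the outputs of $\Trs_1(G)$ is precisely $\mathsf I_\varphi(\mathsf C(G))\in\Dd$, as required. For the reverse direction $\Cc\subseteq\Trs_2(\Dd)$, a genuine interpretation suffices: given $H=\mathsf I_\varphi(\mathsf C(G))\in\Dd$, applying $\mathsf I_\psi$ recovers $\mathsf C(G)$ by the simple equivalence, and ignoring the auxiliary predicates of $\mathsf C$ yields $G$ itself.

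There is no real obstacle here beyond the bookkeeping between the coloring procedure $\mathsf C$ and the nondeterministic coloring step of a transduction; the key observation is that every coloring a deterministic procedure could produce is also among the colorings that a transduction may guess, so simple equivalences automatically lift to two-sided transductions. Both implications together give the desired transduction equivalence of $\Cc$ with the degenerate class $\Dd$.
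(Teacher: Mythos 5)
Your proof is correct and takes exactly the approach the paper intends: the paper's own ``proof'' of this corollary is the one-line remark that it follows by combining \cref{thm:semi-induced} with \cref{thm:sparsification_cosplit}, and you have simply spelled out the routine bookkeeping that a simple equivalence $(\varphi,\psi)$ lifts to a two-sided transduction equivalence (the nondeterministic coloring step of a transduction can always reproduce the output of the deterministic coloring procedure $\mathsf C$, and $\mathsf I_\psi$ plus forgetting auxiliary predicates recovers the original graph). The only cosmetic point is that your class $\Dd=\{\mathsf I_\varphi(\mathsf C(G)):G\in\Cc\}$ consists of \emph{colored} degenerate graphs; if one insists on an uncolored class, take the reducts and add a color-guessing step to $\Trs_2$ — this changes nothing of substance.
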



We are now ready to prove our main result.

\begin{proof}[Proof of \cref{thm:main-shb}]
	Let $\Cc$ be a class of graphs such that the class of all paths cannot be 
	$\FO$-transduced from $\Cc$. Then there are integers $t,k\in \N$ such that no graph in $\Cc$ contains $P_t$, $\ti{P}_t$, or $H_k$ as a semi-induced subgraph.
	Therefore, by \cref{thm:sparsification}, $\Cc$ is transduction equivalent to a class $\Dd$ of degenerate graphs. Clearly, it is still the case that the class of all paths cannot be $\FO$-transduced from $\Dd$, hence graphs in $\Dd$ do not contain all paths as induced subgraphs. Since $\Dd$ is in addition degenerate, 
	by~\cite[Proposition 6.4]{Sparsity}
	 it  follows that $\Dd$ has bounded treedepth, hence also bounded shrubdepth. So by the results of~\cite{Ganian2017}, $\Cc$ has bounded shrubdepth as well.
\end{proof}

\paragraph*{Acknowledgements} The research leading to the results presented in this work was initiated during Dagstuhl Seminar 21391 {\em{Sparsity in Algorithms, Combinatorics, and Logic}}. We are grateful to the Dagstuhl staff for creating such a stimulating work atmosphere. We also thank multiple other participants of the workshop for inspiring discussions that greatly helped in the shaping of this~work.

\bibliographystyle{amsplain}
\bibliography{ref}
\end{document}